\DeclareMathOperator{\Ham}{Ham}
\theoremstyle{plain}
\newtheorem*{thm*}{Theorem}
\newtheorem*{cor*}{Corollary}
\newtheorem{thm}{Theorem}
\newtheorem{lemma}{Lemma}[section]
\newtheorem{prop}[lemma]{Proposition}
\newtheorem{claim}[lemma]{Claim}
\newtheorem{cor}[lemma]{Corollary}
\newtheorem{theorem}[lemma]{Theorem}
\theoremstyle{definition}
\newtheorem{rem}[lemma]{Remark}
\newtheorem*{thmA}{Theorem A}
\newtheorem*{thmB}{Theorem B}
\def\supp{\text{supp}}
\def\vol{\text{Vol}}
\title{\textbf{Flexibility of the adjoint action of the group of Hamiltonian diffeomorphisms}}
\author{\textbf{Lev Buhovsky, Maksim Stoki\'c}}
\begin{document}

\maketitle

\begin{abstract}
On a closed and connected symplectic manifold, the group of Hamiltonian diffeomorphisms has the structure of an infinite-dimensional Fr\'echet Lie group, where the Lie algebra is naturally identified with the space of smooth and zero-mean normalized functions, and the adjoint action is given by pullbacks. We show that this action is flexible: for every non-zero smooth and zero-mean normalized function $ u $, any other smooth and zero-mean function $ f $ can be written as a finite sum of elements in the orbit of $u$ under the adjoint action. Additionally, the number of elements in this sum is dominated by the uniform norm of $f$. This result can be interpreted as a (bounded) infinitesimal version of Banyaga's theorem on the simplicity of the group of Hamiltonian diffeomorphisms.  
\end{abstract}

\selectlanguage{french}
\begin{center}
    \LARGE\textbf{Flexibilit\'e de l'action adjointe du groupe des diff\'eomorphismes hamiltoniens}
\end{center}
\begin{abstract}
Sur une vari\'et\'e symplectique compacte et connexe, le groupe des diff\'eomorphismes hamiltoniens poss\`ede la structure d'un groupe de Lie de Fr\'echet de dimension infinie, dont l'alg\`ebre de Lie s'identifie naturellement \`a l'espace des fonctions lisses normalis\'ees de moyenne nulle, et l'action adjointe est par tir\'es en arri\`ere. Nous d\'emontrons que cette action est flexible~: pour chaque fonction lisse non nulle, normalis\'ee et de moyenne nulle $u$, toute autre fonction lisse, et de moyenne nulle $f$ peut \^etre \'ecrite comme une somme finie d'\'el\'ements de l'orbite de $u$ sous l'action adjointe. De plus, le nombre d'\'el\'ements dans cette somme est domin\'e par la norme uniforme de $f$. Ce r\'esultat peut \^etre interpr\'et\'e comme une version infinit\'esimale (born\'ee) du th\'eor\`eme de Banyaga sur la simplicit\'e du groupe des diff\'eomorphismes hamiltoniens. 
\end{abstract}
\selectlanguage{english}

\section{Introduction and main results}

Consider a closed and connected symplectic manifold $(M,\omega)$ of dimension $2n$. Let $C^{\infty}_0(M)$ be the space of smooth functions that are zero-mean normalized with respect to the volume form $\omega^n$. When equipped with the $C^{\infty}$-topology, the group of Hamiltonian diffeomorphisms $\Ham(M,\omega)$ is an infinite-dimensional Fr\'echet Lie group, whose Lie algebra $\mathcal{A}$ can be identified with the space $C^{\infty}_0(M)$. The adjoint action of $\Ham(M,\omega)$ on $\mathcal{A}$ is given by $\mathrm{Ad}_{\phi}f=f\circ\phi^{-1}$, for every $f\in\mathcal{A}=C^{\infty}_0(M)$ and $\phi\in\Ham(M,\omega)$. Our main result shows flexibility of the adjoint action in the following sense:

\begin{thm}\label{ThmMain}
    Let $(M,\omega)$ be a closed and connected symplectic manifold, and let $u\in C_0^{\infty}(M)$ be a non-zero function. There exists $N\in\mathbb{N}$ that only depends on $u$, such that for any $f\in C^{\infty}_0(M)$ with $\|f\|_{\infty}\leq 1$, one can write
    \[f=\sum_{i=1}^N\Phi_{i}^*u,\]
    for some Hamiltonian diffeomorphisms $\Phi_{i}\in\Ham(M,\omega)$.
\end{thm}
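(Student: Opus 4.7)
The plan is to combine a global fragmentation reduction with a local building-block step. The intermediate result I would aim for is: there exist small disjoint Darboux balls $B_0, B_* \subset M$ and an integer $N_0$ depending only on $u$ such that every $g \in C_0^{\infty}(M)$ supported in $B_0 \cup B_*$ with $\|g\|_{\infty} \le C_0$ can be written as $\sum_{i=1}^{N_0} u \circ \phi_i^{-1}$. Granted this, I would cover $M$ by finitely many Darboux balls $B_1, \ldots, B_K$ (with $K$ depending only on $(M,\omega)$), take a subordinate partition of unity $\{\chi_j\}$, fix a smooth bump $b$ supported in $B_*$ with $\int b\,\omega^n = 1$, and use the decomposition
\[
f \;=\; \sum_{j=1}^{K}\bigl(f\chi_j - c_j b\bigr),\qquad c_j := \int f\chi_j\,\omega^n.
\]
The identity $\sum_j c_j = \int f\,\omega^n = 0$ makes this exact. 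Each summand is zero-mean, supported in $B_j \cup B_*$, with $L^{\infty}$-norm bounded by $C(M)\|f\|_{\infty}$. Conjugating each summand by a Hamiltonian diffeomorphism $\psi_j$ sending $B_0$ to $B_j$ while fixing $B_*$ then reduces to the local claim, yielding a total $N = K N_0$ depending only on $u$.

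For the local step, the key geometric input is that $u$ is nonzero and zero-mean, so $u \ge c$ on some open ball $V^+$ and $u \le -c$ on some $V^-$ for a constant $c > 0$. By the standard multi-transitivity of $\Ham(M,\omega)$ on disjoint balls of matched volume, for any small disjoint $W^\pm \subset B_0 \cup B_*$ I can find $\phi \in \Ham$ with $\phi^{-1}(W^\pm) \subset V^\pm$, so $u \circ \phi^{-1}$ is a controlled ``dipole'' of magnitude $\ge c$ on $W^+ \cup W^-$. To match a prescribed $g$, I would run an iterative halving scheme: at each stage, a bounded number of carefully paired dipole-pullbacks reduces the $L^{\infty}$-residual of $g$ on $B_0 \cup B_*$ by a definite factor, while the paired construction is chosen so that the off-target leakage of each pullback is cancelled by that of its partner. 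After finitely many halvings the residual becomes small enough in $C^{\infty}$ that a final smoothing step---e.g.\ a Moser-type argument absorbing the tail into a single pullback along level sets of $u$---closes the gap to an \emph{exact} finite sum rather than a mere approximation.

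The principal obstacle is precisely this last point. The orbit $\{u \circ \phi^{-1}\}$ is globally constrained, since every element shares the distribution function of $u$, so any pullback necessarily deposits unwanted mass outside the target region. Engineering pairs of pullbacks whose off-target contributions cancel \emph{exactly} (not merely approximately), while keeping the number of pairs independent of $g$, is where the bulk of the technical work lies, and where I expect Darboux-type local normal forms for $u$ near its regular level sets to be used. This is also where the hypothesis $\|f\|_{\infty}\leq 1$ enters quantitatively: the halving scheme pays a fixed number of pullbacks per unit of residual $L^{\infty}$-norm, and a uniform sup-norm bound on $f$ is what allows the total count $N$ to depend only on $u$ rather than on finer norms of $f$.
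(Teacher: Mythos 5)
Your global reduction (cover by Darboux balls, partition of unity, a reservoir bump $b$ to restore zero mean on each piece, conjugation to a model ball) is sound and parallels the paper's localization step (Lemma \ref{localizationLemma} plus the Darboux-chart setup), though the paper must additionally arrange, via Lemma \ref{extended-Darboux_nbhd-Lemma} and Polterovich's averaging property, that $u$ equals $\pm x_1+c$ in the chart and that the leftover terms can be summed with \emph{plus signs only} --- a constraint your sketch never confronts, since your ``dipoles'' are full copies of $u\circ\phi^{-1}$ and their sum must equal $f$ exactly everywhere, including wherever the off-target mass of $u$ lands.

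The genuine gap is in the local step. Your iterative halving scheme only tracks $L^{\infty}$-norms, and every individual pullback $u\circ\phi^{-1}$ has the fixed sup-norm $\|u\|_{\infty}$ and the fixed distribution function of $u$; so after finitely many halvings you are left with a small residual that must be represented \emph{exactly} by full-size pullbacks, and the proposed ``final Moser-type absorption'' is not an argument. The only way to close such a gap exactly is to perturb an already-used diffeomorphism, i.e.\ to solve $\Phi^*v-v=h$ for the residual $h$ --- and this (the paper's Proposition \ref{MainProp2}) requires a bound on $\|Dh\|_{\infty}$ relative to the slope of $v$, not on $\|h\|_{\infty}$. Your scheme never establishes any derivative control on the residuals, so the exactness step fails. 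The paper's actual mechanism for exactness is entirely different and is the heart of the proof: after normalizing $u$ to the coordinate function $x_1$, each local piece is written as a telescoping difference $\Phi^*g-g$ by solving the cohomological equation $f(t)=g(t+2\sqrt{2}\pi)-g(t)$ over an irrational rotation realized by a Hamiltonian flow on an embedded annulus (Lemmas \ref{EmbeddingLemma} and \ref{rotation-Lemma}); the small-denominator estimates there cost three derivatives, which is why the microscale statement demands $\|D^3f\|_{\infty}\leq C/\varepsilon^3$ and why the local statement must first redistribute $f$ over an $\varepsilon$-grid to manufacture that third-derivative bound. None of this machinery, nor any substitute for it, appears in your proposal, so the core of the theorem remains unproved.
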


For a non-zero function $ u \in C^\infty_0(M) $, denote by $ N(u) \in \mathbb{N} $ the {\em minimal} $ N $ (the number of summands) as in the theorem. Then $ N(u) $ is invariant under the action of symplectic diffeomorphisms, and it would be interesting to understand its properties better.
 
One can view Theorem \ref{ThmMain} as an infinitesimal analogue of the simplicity of the group $ \Ham(M,\omega) $ proved by Banyaga \cite{Ba1}. Indeed, simplicity of $ \Ham(M,\omega) $ is equivalent to saying that for any non-trivial $ \Phi \in \Ham(M,\omega) $, any other $ \Psi \in \Ham(M,\omega) $ can be written as
\begin{equation}\label{BanyagaEq}
    \Psi = \prod_{i=1}^m \Theta_i^{-1} \Phi^{\pm 1} \Theta_i .
\end{equation}
where $ \Theta_i \in \Ham(M,\omega) $ for each $ i $. Now, if we fix an autonomous Hamiltonian function $ H $ and assume that $ \Phi^\varepsilon $ is the time-$\varepsilon$ map of the Hamiltonian flow of $ H $ (when $ \varepsilon $ is small), then up to $ o(\varepsilon) $ the Hamiltonian diffeomorphism $ \Psi^\varepsilon = \prod_{i=1}^m \Theta_i^{-1} (\Phi^{\varepsilon})^{\pm 1} \Theta_i $ equals to the time-$\varepsilon$ map of the Hamiltonian flow 
generated by $ F = \sum_{i=1}^m \pm \Theta_{i}^*H $. Note that in Theorem \ref{ThmMain} subtraction is not needed, and only addition is used for representing the function $ f $ via pullbacks of $ u $ by Hamiltonian diffeomorphisms. Moreover, Theorem \ref{ThmMain} guarantees existence of such a representation where the number of terms does not depend on the function $ f $ (provided that $ \| f \|_\infty \leq 1 $). 

\begin{rem}
    Comparing our result with Banyaga's simplicity theorem, one may ask if the number of terms $m$ in the representation (\ref{BanyagaEq}) is bounded from above by some number $m_0$, provided that $\|\Psi\|_{\mathrm{Hofer}}\leq 1$. The assumption on the Hofer's norm of $\Psi$ is necessary, since the triangle inequality implies $\|\Psi\|_{\mathrm{Hofer}}\leq m\cdot\|\Phi\|_{\mathrm{Hofer}}$. However, this assumption is not enough to bound $m$. A function $r:\Ham(M,\omega)\rightarrow\mathbb{R}$ is called a \textit{homogeneous quasimorphism} if there exists a constant $D$ such that $|r(\phi\circ\psi)-r(\phi)-r(\psi)|\leq D$ and $r(\phi^m)=mr(\phi)$ for all $\phi,\psi\in\Ham(M,\omega)$ and $m\in\mathbb{Z}$. If a homogeneous quasimorphism on $\Ham(M,\omega)$ exists, and $m$ is bounded by $m_0$, we get
    \[\|\Psi\|_{\mathrm{Hofer}}\leq 1\implies |r(\Psi)|\leq C := (m_0-1)D+m_0\cdot|r(\Phi)|.\]
    Now let $\Psi\in\Ham(M,\omega)$ with $\|\Psi\|_{\mathrm{Hofer}}\leq 1/2$, and let $k=\lfloor 1/\|\Psi\|_{\mathrm{Hofer}}\rfloor$. Note that $\|\Psi^k\|_{\mathrm{Hofer}}\leq k\cdot\|\Psi\|_{\mathrm{Hofer}}\leq 1$, therefore we can apply the previous estimate for $\Psi^k$ and get
    \[k\cdot|r(\Psi)|=|r(\Psi^k)|\leq C.\]
    From here we conclude
    \[\|\Psi\|_{\mathrm{Hofer}}\leq 1/2\implies |r(\Psi)|\leq2C \|\Psi\|_{\mathrm{Hofer}}.\]
    In particular, we get the Hofer continuity of $r$ at the identity. However, there are examples of symplectic manifolds and homogeneous quasimorphisms that are not Hofer continuous at the identity (see \cite{Kh} for examples).
\end{rem}

\begin{rem}
Despite the simplicity of $ \Ham(M,\omega) $ and Theorem \ref{ThmMain}, the Lie algebra $\mathcal{A}$ is not simple (this contrasts with the case of finite-dimensional Lie groups). Indeed, for any open subset $ U \subset M $, the space of all $ f \in C^{\infty}_0(M) $ that vanish on $ U $ 
forms an ideal of $ \mathcal{A} $ which is nontrivial and proper provided that $ U \neq \emptyset $ and that $ M\setminus U $ has a non-empty interior.
\end{rem}

In \cite{BO} Buhovsky and Ostrover showed the following result, which was also recently reproved by Lempert \cite{Le} via an elegant functional analytic approach:

\begin{thmA}[Buhovsky-Ostrover \cite{BO}; Lempert \cite{Le}] \label{ThmBO} 
Let $(M,\omega)$ be a closed symplectic manifold.
Any $\Ham(M,\omega)$-invariant pseudo-norm $\| \cdot \|$ on
${\mathcal A}$ that is continuous in the $C^{\infty}$-topology, is
dominated from above by the $L_{\infty}$-norm i.e., $\| \cdot \|
\leq C \| \cdot \|_{\infty}$ for some constant $C$.
\end{thmA}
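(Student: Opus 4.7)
The plan is to derive Theorem A as an immediate corollary of Theorem \ref{ThmMain}; in fact, this route even eliminates the $C^{\infty}$-continuity hypothesis on the pseudo-norm, since only the algebraic properties of $\|\cdot\|$ are required.

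Let $\|\cdot\|$ be a $\Ham(M,\omega)$-invariant pseudo-norm on $\mathcal{A}$. First I would fix any non-zero $u\in \mathcal{A}$ and let $N=N(u)$ be the integer provided by Theorem \ref{ThmMain}. Given a non-zero $f\in\mathcal{A}$, the rescaled function $f/\|f\|_{\infty}$ lies in $\mathcal{A}$ and has sup-norm at most one, so Theorem \ref{ThmMain} produces Hamiltonian diffeomorphisms $\Phi_{1},\dots,\Phi_{N}$ with
\[
\frac{f}{\|f\|_{\infty}}=\sum_{i=1}^{N}\Phi_{i}^{*}u.
\]
The triangle inequality together with $\Ham(M,\omega)$-invariance then gives
\[
\left\|\frac{f}{\|f\|_{\infty}}\right\|\leq\sum_{i=1}^{N}\|\Phi_{i}^{*}u\|=N\|u\|,
\]
and absolute homogeneity of the pseudo-norm yields $\|f\|\leq N(u)\|u\|\cdot\|f\|_{\infty}$, so one can take $C=N(u)\|u\|$.

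The derivation only uses subadditivity, absolute homogeneity, and $\Ham(M,\omega)$-invariance of $\|\cdot\|$; the $C^{\infty}$-continuity hypothesis plays no role. Thus, as a byproduct, Theorem \ref{ThmMain} strengthens Theorem A by removing the continuity assumption entirely. The main obstacle in this strategy therefore lies upstream in establishing Theorem \ref{ThmMain} itself: the whole difficulty is packaged into producing a finite decomposition $f=\sum_{i=1}^{N}\Phi_{i}^{*}u$ whose length $N$ depends only on $u$ (and not on $f$) as long as $\|f\|_{\infty}\leq 1$. Once such a uniformly bounded decomposition is in hand, the pseudo-norm inequality is a two-line calculation.
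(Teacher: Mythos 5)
Your argument is correct and is essentially identical to the paper's own derivation: the paper likewise deduces the (strengthened, continuity-free) statement from Theorem \ref{ThmMain} by applying the representation to $f/\|f\|_{\infty}$ and using only subadditivity, homogeneity, and $\Ham(M,\omega)$-invariance, obtaining $\|f\|\leq N\|u\|\,\|f\|_{\infty}$. The only point worth adding is the paper's remark that one may assume $M$ connected (Theorem \ref{ThmMain} requires connectedness, while Theorem A is stated for closed manifolds).
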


\noindent Theorem \ref{ThmMain} readily implies that one can remove the condition of continuity in $C^{\infty}$ topology:
\begin{thm}
Let $ (M,\omega) $ be a closed symplectic manifold. Any $\Ham(M,\omega)$-invariant norm on the space $ {\cal A} = C^{\infty}_0(M)$ is bounded from above by a constant multiple of the $L_{\infty}$-norm.
\end{thm}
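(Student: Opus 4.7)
The plan is to deduce the theorem as an almost immediate corollary of Theorem \ref{ThmMain}, using that a norm on $\mathcal{A}$ is by definition finite-valued, subadditive, and positively homogeneous, so that the only real work has been done in Theorem \ref{ThmMain} itself.

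Fix any $\Ham(M,\omega)$-invariant norm $\| \cdot \|$ on $\mathcal{A} = C^{\infty}_0(M)$. The first step is to pick some auxiliary nonzero $u \in C^{\infty}_0(M)$; since $\| \cdot \|$ is a norm we have $\| u \| < \infty$, and we let $N = N(u)$ be the integer provided by Theorem \ref{ThmMain}. For any $f \in \mathcal{A}$ with $\| f \|_{\infty} \leq 1$, Theorem \ref{ThmMain} yields a representation $f = \sum_{i=1}^{N} \Phi_i^* u$ with $\Phi_i \in \Ham(M,\omega)$, and combining $\Ham$-invariance with the triangle inequality gives
\[
\| f \| \;\leq\; \sum_{i=1}^{N} \| \Phi_i^* u \| \;=\; N \| u \|.
\]

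The second step is to remove the normalization assumption. For an arbitrary nonzero $f \in \mathcal{A}$ the function $f / \| f \|_\infty$ has $L_\infty$-norm equal to $1$, so by the previous paragraph $\| f / \| f \|_\infty \| \leq N \| u \|$, and positive homogeneity yields $\| f \| \leq C \| f \|_\infty$ with $C = N \| u \|$, which is the desired bound (trivially valid when $f = 0$).

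There is no real obstacle here: Theorem \ref{ThmMain} has built in exactly the ingredient that the continuity hypothesis of Theorem A was designed to supply, namely a quantitative decomposition of any bounded $f$ into a controlled number of pullbacks of a single reference function. The only subtlety worth flagging is the role of the hypothesis that $\| \cdot \|$ is a norm rather than just an invariant subadditive functional: it is used once, at the very start, to ensure that $\| u \| < \infty$ for the auxiliary function $u$, so that the constant $C = N \| u \|$ is finite.
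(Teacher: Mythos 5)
Your proof is correct and is essentially identical to the paper's own argument: fix a nonzero $u$, apply Theorem \ref{ThmMain} to $f/\|f\|_\infty$, and combine the triangle inequality, $\Ham(M,\omega)$-invariance, and homogeneity to get $\|f\|\leq N\|u\|\cdot\|f\|_\infty$. The only point you omit is that the theorem as stated assumes only that $M$ is closed while Theorem \ref{ThmMain} requires connectedness, so one should first reduce to the connected case (as the paper does with a one-line ``without loss of generality'').
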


\begin{proof}
    Without loss of generality we can assume that $ M $ is connected. Fix a non-zero function $u\in C^{\infty}_0(M)$, and let $f\in C^{\infty}_0(M)$. Theorem \ref{ThmMain} gives us a representation
    \[f=\|f\|_{\infty}\cdot\sum_{i=1}^N\Phi^*_iu,\]
    where $N$ depends only on $u$. Applying the triangle inequality and $\Ham(M,\omega)$-invariance of the norm $\|\cdot\|$ we get $\|f\|\leq N\cdot\|u\|\cdot \|f\|_{\infty}$.
\end{proof}
    
One ingredient in the proof of Theorem \ref{ThmMain} is a property of solutions of the equation $ h(t) = f(t+\alpha)-f(t) $, where $ h : S^1 \rightarrow \mathbb{R} $ is a given smooth function of zero mean, $ \alpha \in \mathbb{R} $ is an irrational number which is badly approximable by rationals, and $ f : S^1 \rightarrow \mathbb{R} $ is an unknown function whose properties are of interest (see Lemma \ref{rotation-Lemma} in Section \ref{subsec:AuxLemmas} below). We remark that this equation is a simplest example of a small denominators problem \cite{A}, and its study already appeared in Hilbert's book \cite[Chapter 17, Section 5]{Hi}. Equations of that type also play an important role in establishing simplicity of classical diffeomorphisms groups \cite{Ba2}, however, interestingly enough, the way the equation is used in our proof of Theorem \ref{ThmMain}, seems to be different.

In \cite{P}, Polterovich proved the following remarkable averaging property:
\begin{thmB}[Polterovich \cite{P}]
Let $ (M,\omega) $ be a closed and connected symplectic manifold, and let $ u \in C(M) $ be a continuous function which is zero-mean normalized with respect to the volume form $\omega^n$. Then for every $ \varepsilon > 0 $ there exist $ \Phi_1, \ldots, \Phi_N \in \Ham(M,\omega) $ such that 
$$ \frac{1}{N} | u \circ \Phi_1(x) + \cdots + u \circ \Phi_N(x) | < \varepsilon $$
for every $ x \in M $.
\end{thmB}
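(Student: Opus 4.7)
My plan is to prove Theorem B by approximating $u$ by a step function on a fine equal-volume partition of $M$, and then realizing the desired uniform average as a Birkhoff sum along iterates of a single Hamiltonian diffeomorphism that cyclically permutes the pieces of that partition. Fix $\varepsilon > 0$. Since $u$ is continuous on the compact manifold $M$, it is uniformly continuous, so there is $\delta > 0$ with $|u(x)-u(y)| < \varepsilon/2$ whenever $d(x,y) < \delta$ in some background Riemannian metric. I would then partition $M$ into pairwise disjoint measurable cells $V_1, \dots, V_N$ of equal volume $\vol(M)/N$ and diameter less than $\delta$, with $N$ sufficiently large. Let $c_j$ denote the average of $u$ over $V_j$. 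Then $|u-c_j|<\varepsilon/2$ on $V_j$ and, by the zero-mean hypothesis, $\sum_{j=1}^N c_j = 0$.

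The heart of the argument is the existence of a Hamiltonian diffeomorphism $\Phi \in \Ham(M,\omega)$ such that $\Phi(V_j) = V_{j+1 \bmod N}$ for every $j$. To produce $\Phi$ I would choose the partition adaptively, arranging the cells as a chain of symplectically embedded blocks threaded along a smooth embedded closed loop $\gamma \subset M$ (which exists by connectedness of $M$) and realizing the unit cyclic shift along $\gamma$ as the time-$1$ map of a Hamiltonian flow supported in a tubular neighborhood of $\gamma$. An alternative construction invokes the standard symplectic ball-moving principle together with Banyaga's fragmentation theorem to carry the ordered configuration $(V_1,\dots,V_N)$ to its cyclic shift $(V_2,\dots,V_N,V_1)$, producing $\Phi$ as the endpoint of the corresponding Hamiltonian isotopy.

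With such $\Phi$ in hand, set $\Phi_i := \Phi^{i-1}$ for $i=1,\dots,N$. For any $x \in V_{j(x)}$, the iterates satisfy $\Phi^{i-1}(x) \in V_{j(x)+i-1 \bmod N}$, so the $N$-orbit visits every cell exactly once and
\[
\Bigl|\frac{1}{N}\sum_{i=1}^N u(\Phi_i(x))\Bigr| \;\leq\; \Bigl|\frac{1}{N}\sum_{k=1}^N c_k\Bigr| + \frac{\varepsilon}{2} \;=\; \frac{\varepsilon}{2} \;<\; \varepsilon,
\]
uniformly in $x \in M$, which is the desired estimate.

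The main obstacle is the construction of $\Phi$: producing a Hamiltonian diffeomorphism that genuinely permutes a \emph{tiling} partition of $M$ (rather than merely a small-volume subset) requires care with flux obstructions, with smooth compatibility along cell boundaries, and with the question of whether a single embedded loop $\gamma$ can be arranged so that its tubular neighborhood covers all of $M$. One way to finesse this is to allow a thin interstitial region of arbitrarily small volume between the blocks on which $\Phi$ is supported; one must then argue separately that the contribution of this region to the Birkhoff average can be absorbed into the $\varepsilon$ budget via a further averaging step or by suitably reshuffling the interstitial mass between the cells. The remaining portions of the argument are soft consequences of uniform continuity and the zero-mean condition.
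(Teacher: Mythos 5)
Your overall strategy (equal-volume partition with small cells, cyclic permutation by a Hamiltonian diffeomorphism, Birkhoff average) is the classical one going back to Polterovich, but as written it has two genuine gaps, and the second is the crux of the whole theorem. First, a Hamiltonian diffeomorphism that \emph{exactly} cyclically permutes a tiling partition $V_1,\dots,V_N$ of all of $M$ does not exist in general: a tubular neighbourhood of an embedded loop cannot cover $M$, and the ball-moving/fragmentation techniques only move regions of small volume inside strictly larger open sets, not cells that exhaust the manifold. What is actually available is Katok's basic lemma, which produces $\Phi$ with $\vol(\Phi(V_j)\,\Delta\, V_{j+1})$ arbitrarily small, i.e.\ an \emph{approximate} permutation. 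You acknowledge this, but the proposed fix is where the real difficulty lies.

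Second, and more seriously, once the permutation is only approximate, your pointwise estimate collapses into an $L^1$ estimate: for $x$ in the exceptional set where the orbit $x,\Phi(x),\dots,\Phi^{N-1}(x)$ fails to visit each cell once, the Birkhoff average can be as large as $\|u\|_\infty$, and that exceptional set is only small in measure, not empty. So this argument proves $\frac{1}{N}\int_M|\sum_i u\circ\Phi_i|\,\omega^n<\varepsilon$, not the uniform bound claimed in Theorem B. Upgrading from $L^1$-smallness to $C^0$-smallness of an averaged sum is the essential content of the theorem and requires an additional mechanism: for instance a local smoothing operator obtained by averaging over flows of Hamiltonians whose vector fields span each tangent space (which converts $L^1$-small inputs into $C^0$-small outputs), or, as in the proof given in Section \ref{sec:AvProp} of this paper, a Hahn--Banach duality argument showing that $0$ lies in the $C^0$-closed convex hull of the $\Ham(M,\omega)$-orbit of $u$, combined with Lempert's density lemmas. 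Your phrase ``a further averaging step'' points at exactly this missing ingredient, but without supplying it the proof is incomplete at its central step.
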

In the case of an open connected symplectic manifold such a statement holds as well (for compactly supported $ u $) and plays an important role in the proof of Theorem \ref{ThmMain}. But in fact, for a {\em closed} connected symplectic manifold $ (M,\omega) $ and a {\em smooth} non-zero function $ u \in C^\infty_0(M) $, Theorem \ref{ThmMain} implies a sharp version of Theorem C, since in particular it shows that {\em the zero function can be represented as such a sum of pullbacks of $ u $ by Hamiltonian diffeomorphisms}. It is natural to ask whether this sharp version of Theorem C (or more generally, a version of Theorem \ref{ThmMain}) also holds for {\em continuous} zero-mean normalized functions $ u $ if we allow to use pullbacks by {\em Hamiltonian homeomorphisms}\footnote{This notion was initially introduced by M\"uller and Oh who defined a Hamiltonian homeomorphism to be the time-$1$ map of a continuous/topological Hamiltonian flow, see \cite{OM} for more details (cf. also \cite{Mu}). A different version of this notion was considered in \cite{L-C,BHS}, where a Hamiltonian homeomorphism was defined to be a homeomorphism that can be obtained as a uniform limit of a sequence of Hamiltonian diffeomorphisms.}. However, here one can produce a counterexample, see Proposition \ref{rem:counter-examp} below. It would be interesting to verify whether this is nevertheless true if one imposes certain additional regularity assumptions on $ u $ (for instance, we can restrict to the case when on some open and connected subset of $M$ the function $u$ is smooth and non-constant).
\begin{prop} \label{rem:counter-examp}
On a closed symplectic manifold $ (M,\omega) $, let us construct a continuous and zero-mean normalized function $ u : M \rightarrow \mathbb{R} $ such that for no collection of volume preserving homeomorphisms (in particular, Hamiltonian homeomorphisms)  $ \Phi_1, \ldots, \Phi_k : M \rightarrow M $, we have 
\begin{equation} \label{eq:sum-pullbacks-non-zero}
\sum_{i=1}^k \Phi_i^*u \equiv 0 .
\end{equation}
\end{prop}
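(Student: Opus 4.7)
My plan is to construct $u$ as a continuous function that is locally constant on an open subset of $M$ of full measure, with the constant values chosen to be $\mathbb{Q}$-linearly independent in $\mathbb{R}$. This rational independence will obstruct any non-trivial non-negative integer combination of the values from summing to zero, which gives the desired pointwise obstruction to vanishing of $\sum_{i=1}^k \Phi_i^* u$.

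Concretely, I would first build a closed subset $C \subset M$ of measure zero whose complement $M \setminus C$ decomposes into countably many connected open components $\{U_j\}_{j \geq 1}$, arranged so that every point of $C$ is accumulated by components of decreasing size (for instance, via a Cantor-grid construction inside a Darboux chart, or on $\mathbb{T}^{2n}$ by taking $C$ to be a union of Cantor ``slabs'' in each coordinate). I would then choose real numbers $\{c_j\}_{j \geq 1}$ satisfying: (i) $\{c_j\}$ is $\mathbb{Q}$-linearly independent in $\mathbb{R}$; (ii) $c_j \to 0$ along every sequence of components accumulating at $C$, so that setting $u \equiv 0$ on $C$ yields a continuous extension; and (iii) $\sum_j c_j\, |U_j| = 0$, i.e.\ the zero-mean condition. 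The key observation is that (iii) is a single real-valued constraint, not a finite $\mathbb{Q}$-linear relation among the $c_j$, so $\mathbb{Q}$-linear independence can be preserved: one may first fix a $\mathbb{Q}$-linearly independent decaying sequence (for example $e_j = \sqrt{p_j}/j!$ with $p_j$ the $j$-th prime) and then adjust a single term to enforce (iii), which generically does not create a finite rational relation. Define $u \equiv c_j$ on each $U_j$ and $u \equiv 0$ on $C$; this $u$ is continuous on $M$ with zero mean.

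Now suppose, toward a contradiction, that $\sum_{i=1}^k \Phi_i^* u \equiv 0$ for some $k \geq 1$ and volume-preserving homeomorphisms $\Phi_1, \ldots, \Phi_k$ of $M$. Because $|C| = 0$ and each $\Phi_i$ preserves volume, the set
\[
E := \bigcap_{i=1}^k \Phi_i^{-1}(M \setminus C)
\]
has full measure in $M$. For any $x \in E$, each $\Phi_i(x)$ lies in some $U_{j_i(x)}$, so
\[
\sum_{i=1}^k u(\Phi_i(x)) = \sum_{j \geq 1} m_j(x)\, c_j,
\]
where $m_j(x) \in \mathbb{Z}_{\geq 0}$ counts the indices $i$ with $\Phi_i(x) \in U_j$, and $\sum_j m_j(x) = k$. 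This sum has only finitely many nonzero terms, so by the $\mathbb{Q}$-linear independence of $\{c_j\}$ it vanishes only if every $m_j(x) = 0$, contradicting $\sum_j m_j(x) = k \geq 1$. Hence $\sum_i \Phi_i^* u$ is nonzero on all of $E$, and in particular not identically zero on $M$.

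The main technical step is the simultaneous realization of conditions (i)--(iii) together with continuity of $u$ at the points of $C$; this can be carried out by a careful but essentially standard Cantor-type construction paired with a generic choice of values, and I do not anticipate any conceptual obstacle beyond the bookkeeping of the iterative construction.
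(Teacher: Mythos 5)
Your construction rests on exactly the same mechanism as the paper's: make $u$ locally constant on an open set of full measure, with values lying in a set $S\subset\mathbb{R}$ such that no finite sum of elements of $S$ (with positive integer multiplicities) vanishes; then, since volume-preserving homeomorphisms pull back sets of full measure to sets of full measure, $\sum_i\Phi_i^*u$ equals such a finite sum, hence is non-zero, on a non-empty set. The only substantive difference is the choice of $S$, and this is where your write-up has a gap. The paper takes $S=((-\infty,0)\cap\mathbb{Q})\cup((0,+\infty)\cap\sqrt2\,\mathbb{Q})$, which has the no-vanishing-sums property (the rational part of any such sum is $\leq 0$ and the $\sqrt2$-rational part is $\geq 0$, not both zero) and is \emph{dense}; density lets one build $u$ from a monotone Cantor-type function with values in $S$ and then achieve zero mean by tuning a single real parameter (a radial reparametrization $\alpha$) via the intermediate value theorem, never perturbing the values themselves. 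You instead demand $\mathbb{Q}$-linear independence of the values $\{c_j\}$, which is stronger than needed and makes the normalization rigid: the claim that one can ``adjust a single term to enforce (iii), which generically does not create a finite rational relation'' is not right as stated, because the constraint $\sum_j c_j|U_j|=0$ determines the adjusted term \emph{uniquely} once the others are fixed — there is no genericity to invoke, and a priori the resulting value could land in the $\mathbb{Q}$-span of finitely many of the remaining $c_j$'s. This is repairable (adjust two terms subject to the single linear constraint, leaving a one-parameter family from which one avoids the countable set of values producing a rational relation, while keeping both values small enough to preserve continuity at $C$), but as written it is the one step of your argument that does not go through; the rest — the full-measure intersection $E$, the counting of multiplicities $m_j(x)$, and the conclusion — is correct and matches the paper.
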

\begin{proof}
First, fix a smooth chart $ (\varphi,V) $ on $ M $, such that $ 0 \in B \subset \varphi(V) \subset \mathbb{R}^{2n} $, where $ B \subset \mathbb{R}^{2n} $ is the closed unit ball centered at $ 0 $. Furthermore, let $ S \subset \mathbb{R} $ be a dense subset with the property that for any $ t_1,\ldots,t_m \in S $ we have $ t_1 + \ldots + t_m \neq 0 $ (for instance we can take $ S = ((-\infty,0) \cap \mathbb{Q}) \cup ((0,+\infty) \cap \sqrt{2}\mathbb{Q}) $). Now take some $ a,b \in S $, $ a > 0 > b $ such that 
$ a \cdot \vol(\varphi^{-1}(B)) + b \cdot \vol(M\setminus \varphi^{-1}(B)) > 0 $, and then by using a Cantor-like procedure find a continuous non-increasing function $ h : [0,1] \rightarrow \mathbb{R} $ such that $ h(0) = a $, $ h(1) = b $, and such that $ h $ is locally constant on an open set $ W \subset (0,1) $, where the measure of $ [0,1] \setminus W $ is zero and $ h(W) \subset S $. For every $ \alpha \in (0,+\infty) $ define the function $ u_\alpha \in C(M) $ by $ u_{\alpha}(x) = h(|\varphi(x)|^\alpha) $ for $ x \in \varphi^{-1}(B) $ and $ u_{\alpha}(x) = b $ for $ x \in M \setminus \varphi^{-1}(B) $. Since $ b < 0 $ and $ a \cdot \vol(\varphi^{-1}(B)) + b \cdot \vol(M\setminus \varphi^{-1}(B)) > 0 $, by the intermediate value theorem there exists $ \alpha \in (0,+\infty) $ such that $ u = u_\alpha $ is mean-normalized. Moreover this function $ u $ is locally constant on an open subset $ U \subset M $ with $ \vol(U) = \vol(M) $, such that moreover $ u(U) \subset S $. In view of our choice of $ S $ (sum of any finite collection of elements of $ S $ is non-zero), this immediately implies $(\ref{eq:sum-pullbacks-non-zero})$.\end{proof}

\subsection{Finite-dimensional analogue of Theorem \ref{ThmMain}}

It is natural to ask whether the statement of Theorem \ref{ThmMain} holds for finite-dimensional simple Lie algebras. Let $\mathfrak{g}$ be the Lie algebra of a finite-dimensional simple Lie group $G$. For a non-zero element $\xi\in\mathfrak{g}$, we define $\mathcal{V}_{\xi}=\{\mathrm{Ad}_{g}\,\xi\mid g\in G\}$. For each $m\in\mathbb{N}$, we define
\[\mathcal{V}_{\xi,m}=\underbrace{\mathcal{V}_{\xi}+\ldots+\mathcal{V}_{\xi}}_{m}.\]

\noindent\textbf{Question.} \textit{Does there exist $m$, such that $\mathcal{V}_{\xi,m}$ contains an open neighbourhood of $0$ in $\mathfrak{g}$?}\\

\noindent The symplectic linear group $\mathrm{Sp}(2n,\mathbb{R})$ serves as an example of a non-compact Lie group, whose Lie algebra $\mathfrak{sp}(2n,\mathbb{R})$ is simple and does not satisfy the above property. Recall that a matrix $H\in\mathfrak{sp}(2n,\mathbb{R})$ if and only if $JH$ is symmetric, where $J$ is the standard skew-symmetric matrix. Therefore, we can identify $\mathfrak{sp}(2n,\mathbb{R})$ with quadratic forms on $\mathbb{R}^{2n}$, where the adjoint action is given by pullbacks. If we consider an element of $\mathfrak{sp}(2n,\mathbb{R})$ corresponding to a positive quadratic form, then the sum of any number of its adjoints by elements of $\mathrm{Sp}(2n,\mathbb{R})$ will also yield a positive quadratic form. Consequently, one cannot generate a neighborhood of $0$ in $\mathfrak{sp}(2n,\mathbb{R})$ by sum of adjoints. On the other hand, the following theorem shows that the answer is positive when $G$ is compact.

\begin{thm}\label{FiniteDimThm}
    Let $\mathfrak{g}$ be the Lie algebra of a compact finite-dimensional simple Lie group $G$. For every non-zero element $\xi\in\mathfrak{g}$ there exists $m\in\mathbb{N}$ such that $\mathcal{V}_{\xi,m}$ contains an open neighbourhood of $0$ in $\mathfrak{g}$.
\end{thm}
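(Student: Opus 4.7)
The plan is to find $m \in \mathbb{N}$ and a point $p = (g_1,\ldots,g_m) \in G^m$ at which the sum-map
\[
F_m : G^m \to \mathfrak{g}, \qquad F_m(g_1,\ldots,g_m) = \sum_{i=1}^m \mathrm{Ad}_{g_i}\xi,
\]
simultaneously has $F_m(p) = 0$ and is a submersion; since the image of $F_m$ is precisely $\mathcal{V}_{\xi,m}$, the inverse function theorem then yields an open neighbourhood of $0$ inside $\mathcal{V}_{\xi,m}$. A direct computation identifies the image of $dF_m$ at $(g_1,\ldots,g_m)$ with $\sum_i [\mathfrak{g}, \mathrm{Ad}_{g_i}\xi]$, so the submersion condition amounts to requiring this sum to equal $\mathfrak{g}$.

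Fix a maximal torus $T \subset G$ with Lie algebra $\mathfrak{t}$, let $W = N(T)/T$ be the Weyl group with lifts $n_w \in N(T)$, and, by replacing $\xi$ by a conjugate (which does not change $\mathcal{V}_{\xi,m}$), assume $\xi \in \mathfrak{t}$. The starting point is the Weyl-sum identity $\sum_{w \in W} w\xi = 0$, which holds because $\sum_w w\xi$ lies in $\mathfrak{t}^W$ and this space vanishes: $W$ acts on $\mathfrak{t}$ via the reflection representation, which for a simple Lie algebra is irreducible and non-trivial, hence has no nonzero invariant vector. In particular $F_{|W|}(n_{w_1},\ldots,n_{w_{|W|}}) = 0$. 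However, this Weyl configuration is \emph{not} a submersion point of $F_{|W|}$: using the negative Killing form as an $\mathrm{Ad}$-invariant inner product, write $\mathfrak{g} = \mathfrak{t} \oplus \mathfrak{t}^\perp$; the root space decomposition gives $[\mathfrak{g}, h] = \bigoplus_{\alpha(h) \neq 0} \mathfrak{g}^{\mathbb{R}}_\alpha \subset \mathfrak{t}^\perp$ for $h \in \mathfrak{t}$, and summing over $w \in W$ yields $\sum_w [\mathfrak{g}, w\xi] = \mathfrak{t}^\perp$ (every root contributes for some $w$, because the $W$-orbit of any root spans $\mathfrak{t}^*$ by irreducibility). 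Thus the derivative image misses the Cartan $\mathfrak{t}$, and this is the main obstacle to overcome.

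The fix is to stack several conjugated copies of the Weyl configuration, chosen so that the resulting Cartan subalgebras $\mathrm{Ad}_{g_i}\mathfrak{t}$ intersect only in $\{0\}$. The subspace $\bigcap_{g \in G} \mathrm{Ad}_g \mathfrak{t}$ is $\mathrm{Ad}(G)$-invariant and contained in $\mathfrak{t} \subsetneq \mathfrak{g}$, so it is a proper ideal of $\mathfrak{g}$, hence $\{0\}$ by simplicity. By iteratively picking $g_i$ that strictly decrease the running intersection one obtains $g_1 = e, g_2,\ldots, g_k \in G$ (with $k \le \dim \mathfrak{t} + 1$) such that $\bigcap_{i=1}^k \mathrm{Ad}_{g_i}\mathfrak{t} = \{0\}$. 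Setting $m = k|W|$ and $p = (g_i n_w)_{1 \le i \le k,\, w \in W} \in G^m$, one computes
\[
F_m(p) \;=\; \sum_{i=1}^k \mathrm{Ad}_{g_i}\Bigl(\sum_{w \in W} w\xi\Bigr) \;=\; 0,
\]
and, using $\mathrm{Ad}$-invariance of the inner product together with the linear-algebra identity $\sum_i V_i^\perp = \bigl(\bigcap_i V_i\bigr)^\perp$,
\[
\sum_{i,w}\bigl[\mathfrak{g},\mathrm{Ad}_{g_i n_w}\xi\bigr]
\;=\; \sum_{i=1}^k \mathrm{Ad}_{g_i}\mathfrak{t}^\perp
\;=\; \Bigl(\bigcap_{i=1}^k \mathrm{Ad}_{g_i}\mathfrak{t}\Bigr)^{\perp}
\;=\; \mathfrak{g}.
\]
Hence $F_m$ is a submersion at $p$ with $F_m(p) = 0$, and $\mathcal{V}_{\xi,m}$ contains an open neighbourhood of $0$.
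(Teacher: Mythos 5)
Your argument is correct, and it takes a genuinely different route from the paper's. The paper splits the problem into two soft steps: first it uses simplicity (the ideal generated by $\xi$ is all of $\mathfrak{g}$, so iterated brackets span) to build curves $\gamma_i$ for which $(t_1,\ldots,t_n)\mapsto\sum_i\mathrm{Ad}_{\gamma_i(t_i)}\xi$ has invertible differential, producing an open set $\mathcal{U}\subset\mathcal{V}_{\xi,n}$ \emph{somewhere} in $\mathfrak{g}$; then it uses Haar averaging to show $0$ lies in the convex hull of the adjoint orbit, Carath\'eodory to make the combination finite, and a rational approximation of the coefficients to translate $\mathcal{U}$ into a sum of copies of itself containing $0$. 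You instead produce, in one stroke, an explicit configuration $p\in G^m$ at which the sum map both vanishes and is a submersion, using the structure theory of compact groups: the Weyl sum $\sum_w w\xi=0$ (irreducibility and non-triviality of the reflection representation) kills the value, the root-space computation identifies the missing directions of $dF$ at the Weyl configuration as exactly $\mathfrak{t}$, and conjugated copies with $\bigcap_i\mathrm{Ad}_{g_i}\mathfrak{t}=\{0\}$ (which exists because that intersection is an ideal) restore surjectivity via $\sum_iV_i^\perp=(\bigcap_iV_i)^\perp$ for the invariant inner product. All the individual steps check out: the formula $\mathrm{im}\,dF_m=\sum_i[\mathfrak{g},\mathrm{Ad}_{g_i}\xi]$, the fact that every root is non-vanishing on some $w\xi$ (the $W$-orbit of a root spans $\mathfrak{t}^*$), and the dimension-decreasing selection of the $g_i$. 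What your approach buys is an explicit bound $m\le(\dim\mathfrak{t}+1)\,|W|$ and a concrete zero of the sum map, at the cost of invoking maximal tori, roots, and Weyl groups; the paper's first step is softer and would apply verbatim to non-compact simple algebras, with compactness entering only through the Haar-measure averaging. (One cosmetic point: conjugating $\xi$ into $\mathfrak{t}$ uses connectedness of $G$, but one can always work with the identity component, whose adjoint orbit is contained in that of $G$.)
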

The proof of the theorem is given in the Section \ref{FinieDimSection}.
  
\subsection{Sketch of the proof of Theorem \ref{ThmMain}}

\noindent The proof of Theorem \ref{ThmMain} consists of three parts, which we briefly explain:\\

\noindent\textbf{The microscale statement:} In this step we prove a weaker microscale version of the theorem, where in particular we impose a bound on the third derivative of $f$. Moreover, we represent $f$ as a sum $\sum\pm\Phi_i^{*}u$, with half of the signs plus, half minus. This condition appears in the argument involving Fourier series (Lemma \ref{rotation-Lemma}). See Section \ref{Section:Microscale}. \\

\noindent\textbf{The local statement:} We start with the normalized smooth function $f$ supported inside the unit $2n$-dimensional cube, assuming that $ \| f \|_\infty \leq 1 $. In order to apply the microscale statement we need the bound on the third derivative of $f$. We cover the unit cube by cubes of small enough size $\varepsilon$. Roughly speaking, we write $f=\sum_{i=1}^m f_i$, where $m$ depends only on the dimension (and not on $\varepsilon$) and each $f_i$ is the sum of normalized functions supported in disjoint cubes of size $\varepsilon$, and moreover satisfy the bound on the third derivative (appearing in the microscale statement). Then we can apply the microscale statement to each $f_i$. See Section \ref{Section:Local}. \\

\noindent\textbf{Localization:} We cover the manifold by Darboux charts and show that any normalized smooth function can be written as a sum of normalized smooth functions with supports in the Darboux charts, with estimates on the uniform norms of the summands. Moreover, for every chart we find an appropriate pair of Hamiltonian diffeomorphisms which composed with $u$ give plus/minus the first coordinate function in that chart, up to an additive constant. Then we deduce Theorem \ref{ThmMain} by applying an averaging property from \cite{P} combined with the local statement. See Section \ref{sec:proofThmMain}.

\subsection{Notation} For an open set $ W \subset \mathbb{R}^d $ and a smooth function $f:W \rightarrow\mathbb{R}$ we define
\[\|D^r f\|_{\infty}=\max_{i_1+\cdots+i_d=r}\sup_{W}\Big|\frac{\partial^r f}{(\partial x_1)^{i_1}(\partial x_2)^{i_2}\cdots(\partial x_d)^{i_d}}\Big|.\]
On the standard symplectic space $ (\mathbb{R}^{2n},\omega) $, we will denote coordinates by $ (x_1,x_2,\ldots, x_{2n}) $, so that 
$ \omega = dx_1 \wedge dx_2 + \cdots + dx_{2n-1} \wedge dx_{2n} $. Additionally, we will denote by $\phi^t_H$ a Hamiltonian flow generated by a Hamiltonian function $H$. The space of compactly supported Hamiltonian diffeomorphisms will be denoted $\Ham_{\mathrm{c}}(M,\omega)$.

\subsection{Acknowledgements} 
We thank Albert Fathi, Leonid Polterovich and Vukašin Stojisavljević for valuable and inspiring remarks. We also thank the anonymous referees for many valuable suggestions and comments. The authors were partially supported by ERC Starting Grant 757585 and ISF Grant 2026/17.

\section{A microscale version of the main result} \label{Section:Microscale}

\begin{theorem}[Microscale statement]\label{ThmMicro}
    Let $\varepsilon >0$ and $ b \in \mathbb{R} $, and let $v : (-4\varepsilon,4\varepsilon)^{2n} \rightarrow \mathbb{R} $ be the function given by 
    \[v(x_1,x_2,\ldots,x_{2n})= \frac{x_1}{\varepsilon}+b.\]
    There exists $C=C(n)>0$ such that for any smooth function $f : (-4\varepsilon,4\varepsilon)^{2n} \rightarrow \mathbb{R} $ supported in 
    $ (-\varepsilon,\varepsilon)^{2n} $ and which satisfies $\|D^3f\|_{\infty}\leq\frac{C}{\varepsilon^3}$ and $\int_{(-\varepsilon,\varepsilon)^{2n}}f\omega^n=0$, one can write
    \[f=\sum_{i=1}^{4n} \Phi_{i,+}^*v-\Phi_{i,-}^*v\]
    for some Hamiltonian diffeomorphisms $\Phi_{i,\pm}\in\Ham_{\mathrm{c}}((-4\varepsilon,4\varepsilon)^{2n})$.
\end{theorem}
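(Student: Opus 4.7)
After the rescaling $x \mapsto \varepsilon x$, I may assume $\varepsilon = 1$, so that $f$ is supported in $(-1,1)^{2n} \subset (-4,4)^{2n}$, $\|D^3 f\|_\infty \le C$, and $v = x_1 + b$. Since the additive constant $b$ cancels inside each pair $\Phi_{i,+}^\ast v - \Phi_{i,-}^\ast v$, the task reduces to writing
\[
f = \sum_{i=1}^{4n}\bigl(x_1 \circ \Phi_{i,+} - x_1 \circ \Phi_{i,-}\bigr).
\]

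The central reduction is to look for pairs of the form $\Phi_{i,+} = \Phi_{i,-} \circ T_i$, where $T_i \in \Ham_{\mathrm{c}}((-4,4)^{2n})$ coincides, on a neighbourhood of the support of $\Phi_{i,-}$, with an exact translation by a fixed quadratic irrational $\alpha$ in a chosen coordinate direction. Such a $T_i$ can be built as the time-$\alpha$ map of a suitable cut-off of a linear Hamiltonian (for instance, $\pm x_{j'}$ generates translation in the symplectically conjugate direction). With this choice each summand telescopes,
\[
x_1 \circ \Phi_{i,+} - x_1 \circ \Phi_{i,-} = G_i \circ T_i - G_i, \qquad G_i := x_1 \circ \Phi_{i,-},
\]
and the problem splits into (a) decomposing $f$ as a sum of such translation-differences, and (b) realizing each $G_i$ as the first coordinate of a compactly supported Hamiltonian diffeomorphism.

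For (a) I proceed by iterated averaging. At each step I split the current remainder into its average along a chosen coordinate direction and the corresponding zero-mean residual, and then apply Lemma \ref{rotation-Lemma} to invert the operator $G \mapsto G \circ T_i - G$ on the residual. Since $\alpha$ is badly approximable, the small-denominator inversion loses at most one derivative, so the hypothesis $\|D^3 f\|_\infty \le C$ yields a $C^2$-bound on $G_i - x_1$. Iterating over all $2n$ coordinate directions peels off the dependence on each variable in turn, and the zero-mean hypothesis on $f$ makes the final remainder vanish. I expect the doubling $2n \to 4n$ to arise because at each stage one has to separately treat the ``bulk'' solution produced by Lemma \ref{rotation-Lemma} and a compact-support correction needed to keep $\Phi_{i,-}$ and $T_i$ supported inside $(-4,4)^{2n}$.

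For (b), the $C^2$-smallness of $G_i - x_1$ makes the functional equation ``the first coordinate of a symplectic map equals $G_i$'' solvable by a perturbation of the identity: to leading order one sets $\Phi_{i,-} = \phi^1_{K_i}$ where $K_i$ solves $\partial K_i/\partial x_2 = G_i - x_1$, and then closes the construction by an inverse-function-theorem iteration whose convergence is guaranteed by choosing the universal constant $C = C(n)$ small enough. The main obstacle, and the reason the $\|D^3 f\|_\infty$ hypothesis is truly needed, is precisely this step (b): solving the first-coordinate equation globally while keeping everything compactly supported in $(-4,4)^{2n}$. The tight interplay between the Diophantine regularity loss in Lemma \ref{rotation-Lemma}, the bookkeeping of compact supports, and the realization of the $G_i$'s is what I expect to make the argument delicate and dictate the precise count $4n$.
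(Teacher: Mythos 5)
Your overall architecture --- reduce to a cohomological equation for a badly approximable rotation number via Lemma \ref{rotation-Lemma}, then realize a $C^1$-small perturbation of $x_1$ as the pullback of $x_1$ by a Hamiltonian diffeomorphism --- is the same as the paper's (Propositions \ref{MainProp1} and \ref{MainProp2}). But there is a genuine gap in your construction of the maps $T_i$. The time-$\alpha$ map of a cut-off linear Hamiltonian is not ``a translation near the support plus something harmless'': its orbits lie on the closed level curves of the cut-off Hamiltonian, and these closed orbits have continuously varying periods, hence varying rotation numbers. The equation $G\circ T_i-G=h$ is then not uniformly solvable: on the (dense set of) invariant curves with rational rotation number it has genuine obstructions beyond the zero-mean condition, and near them the small denominators are uncontrollable; moreover the solution must live on entire orbits, which sweep through the whole cut-off region. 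This is exactly what the paper's Lemma \ref{EmbeddingLemma} is for: it embeds a cylinder $(\mathbb{R}/2\pi\mathbb{Z})\times[-1,1]^{2n-1}$ into the cube and produces a compactly supported Hamiltonian diffeomorphism acting on \emph{every} orbit in the cylinder as rotation by the same irrational fraction $\sqrt{2}$ of a full turn, and the cohomological equation is solved only for data supported inside that cylinder.

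A second problem is the single-pair telescoping $\Phi_{i,+}=\Phi_{i,-}\circ T_i$. Since $G_i=v\circ\Phi_{i,-}=x_1+(\text{compactly supported})$ is unbounded, $G_i\circ T_i-G_i$ contains the term $x_1\circ T_i-x_1$, which is nonzero throughout $\mathrm{supp}(T_i)$ (in particular on the transition region) and cannot be absorbed into the compactly supported residuals you are decomposing. The paper avoids this by first writing $f=\sum_{j=1}^{2n}\Phi_j^*g_j-g_j$ with \emph{both} $\Phi_j$ and $g_j$ compactly supported ($g_j$ inside the cylinder), and only afterwards substituting $g_j=\Psi_j^*v-v$; each of the $2n$ terms then expands into two $(+,-)$ pairs, which is where $4n$ actually comes from --- not from a ``bulk plus correction'' split. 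Two smaller points: Lemma \ref{rotation-Lemma} loses two derivatives, not one, so the $D^3$ hypothesis yields only a $C^1$ bound on the $g_j$ --- which is precisely what is needed; and your step (b) is handled in the paper by an exact flow-matching construction (Lemma \ref{embeddingLemma}, matching the flow of $x_1+h$ with that of $x_1$ from a transverse hypersurface) rather than an inverse-function-theorem iteration, which sidesteps the convergence and compact-support bookkeeping you flag as delicate.
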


The microscale statement is the essential ingredient for the proof of the local version of the main result. Its proof relies on the following two propositions.

\begin{prop}\label{MainProp1}
    Let $L>0$ and let $f : (-2L,2L)^{2n} \rightarrow \mathbb{R} $ be a smooth function supported in $ (-L,L)^{2n} $ such that $\int_{(-L,L)^{2n}}f\omega^{n}=0$. There exist $C_1 = C_1(n) >0$, a collection of smooth functions $g_1,\ldots,g_{2n}\in C^{\infty}_{\mathrm{c}}((-2L,2L)^{2n})$, and a collection of Hamiltonian diffeomorphisms $\Phi_1,\ldots,\Phi_{2n}\in\Ham_{\mathrm{c}}((-2L,2L)^{2n})$ such that
    \begin{enumerate}
        \item $f=\sum_{i=1}^{2n}\Phi_i^* g_i-g_i$,
        \item $(\forall\, 1\leq i\leq 2n)\,\|Dg_i\|_{\infty}\leq C_1L^2\|D^3f\|_{\infty}$.
    \end{enumerate}
\end{prop}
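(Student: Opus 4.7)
The strategy is to first use a Poincar\'e-type decomposition to write $f$ as $\sum_{i=1}^{2n}\partial_i h_i$ with $h_i\in C^\infty_c((-L,L)^{2n})$ and with quantitative bounds $\|D^r h_i\|_\infty \lesssim_n L\|D^r f\|_\infty$, and then to realize each summand $\partial_i h_i$ as $\Phi_i^* g_i - g_i$ for an appropriate Hamiltonian diffeomorphism $\Phi_i$. The first step is carried out by an explicit inductive construction: pick a smooth cutoff $\chi\in C^\infty_c((-L,L))$ with $\int \chi = 1$, set $h_1(x_1,x_{\widehat 1}) = \int_{-L}^{x_1}\big(f(s,x_{\widehat 1}) - \chi(s)M_1(x_{\widehat 1})\big)\,ds$ where $M_1(x_{\widehat 1}) = \int_{-L}^L f(s, x_{\widehat 1})\,ds$ is the $x_1$-marginal, and then iterate on the residual $f - \partial_1 h_1 = \chi(x_1)M_1(x_{\widehat 1})$, which depends on $x_1$ only through $\chi(x_1)$ and whose $x_{\widehat 1}$-marginal still vanishes.

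For each $i$, the Hamiltonian $\Phi_i$ is designed so that on a suitable open set $U_i$ covering $\supp(h_i)$ together with all finitely many translates required by the inversion, the time-$1$ flow $\Phi_i = \phi^1_{\widetilde H_i}$ acts as translation by a fixed vector $\lambda e_i$ with $\lambda$ of order $L$. Here $\widetilde H_i \in C^\infty_c((-2L,2L)^{2n})$ agrees on a neighborhood of $U_i$ with the linear Hamiltonian $\pm\lambda x_{i'}$, where $i'$ is the symplectic conjugate index of $i$ (so that the Hamiltonian vector field of $\pm\lambda x_{i'}$ is precisely $\pm\lambda\partial_i$). On this neighborhood the equation $\Phi_i^* g_i - g_i = \partial_i h_i$ becomes the finite-difference equation
\[ g_i(x + \lambda e_i) - g_i(x) = \partial_i h_i(x), \]
and one would like to solve it by the telescoping sum $g_i(x) = -\sum_{k\ge 0}\partial_i h_i(x + k\lambda e_i)$, which has only finitely many non-zero terms at each $x$.

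The main obstacle is that this candidate $g_i$ is not automatically compactly supported in the $e_i$-direction: as $x_i$ becomes sufficiently negative, the sum degenerates into a Riemann-sum approximation of $\lambda^{-1}\int_{-L}^L\partial_i h_i\,dx_i$, which vanishes as an integral (by the fundamental theorem of calculus and the compact support of $h_i$) but not as a discrete Riemann sum. I would resolve this by modifying $\widetilde H_i$ so that $\Phi_i$ acts as a periodic/cyclic translation on a thin tubular region $T_i$ containing $\supp(h_i)$ (so that $\Phi_i^N|_{T_i} = \mathrm{id}$ for some $N$); for appropriately chosen $T_i$ and $\lambda$ the cyclic solvability obstruction $\sum_{k=0}^{N-1} \partial_i h_i(\cdot + k\lambda e_i)$ can be made to vanish, or at least absorbed into an additional small correction term built from the rapid decay of Fourier modes of the smooth function $\partial_i h_i$ along the tube. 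Once $g_i$ is constructed as a finite sum of $O(L/\lambda)$ translates of $\partial_i h_i$, the quantitative bound follows: $\|Dg_i\|_\infty \lesssim (L/\lambda)\|D^2 h_i\|_\infty \lesssim L \|D^2 f\|_\infty$ for $\lambda \asymp L$, and the $L^\infty$-Poincar\'e inequality applied to the compactly supported function $D^2 f$ yields $\|D^2 f\|_\infty \leq 2L\|D^3 f\|_\infty$, whence $\|Dg_i\|_\infty \leq C_1 L^2\|D^3 f\|_\infty$ as required.
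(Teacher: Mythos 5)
Your overall architecture matches the paper's: a directional Poincar\'e-type decomposition of $f$ into pieces with vanishing mean along one coordinate direction, followed by solving a cohomological equation $\Phi_i^*g_i-g_i=(\text{piece})$ over a Hamiltonian map that acts as a translation in that direction. You have also correctly located the central difficulty: the telescoping solution of the finite-difference equation is a discrete Riemann sum of a zero-integral function and therefore does not vanish where it should, so the translation must be ``closed up'' inside the cube. The gap is in how you close it up. If you make $\Phi_i$ periodic on the tube, i.e.\ $\Phi_i^N=\mathrm{id}$, then $\Phi_i^*g_i-g_i=u$ is solvable if and only if the Birkhoff sum $\sum_{k=0}^{N-1}u\circ\Phi_i^k$ vanishes identically. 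This is a pointwise condition on a \emph{function}, not a single number; for a given $u=\partial_i h_i$ it cannot be arranged by choosing $T_i$ and $\lambda$, and the leftover $\Phi_i$-invariant part (of size roughly $\lambda\|Du\|_\infty$ by Euler--Maclaurin) lies in the cokernel of the coboundary operator, so it cannot be ``absorbed'' into any $\Phi_i^*g_i-g_i$ term. Your appeal to ``rapid decay of Fourier modes'' gestures at the right mechanism but is not a proof.

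The paper's resolution is to wrap the relevant coordinate into a genuine circle: Lemma \ref{EmbeddingLemma} symplectically embeds $(\mathbb{R}/2\pi\mathbb{Z})\times[-1,1]^{2n-1}$ into $(-2,2)^{2n}$, identity on $[-1,1]^{2n}$, together with a compactly supported Hamiltonian diffeomorphism acting on the image as the \emph{irrational} rotation by $2\sqrt{2}\pi$. For an irrational rotation the equation $g(t+\alpha,x)-g(t,x)=f(t,x)$ has no obstruction beyond the zero-mean condition on each circle, and is solved explicitly by Fourier series (Lemma \ref{rotation-Lemma}); since $\sqrt{2}$ is badly approximable, $|e^{in\alpha}-1|\gtrsim |n|^{-1}$, and one gets $\|Dg\|_\infty\le 2\|D^3f\|_\infty$ after paying two derivatives to the small denominators. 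This loss is exactly why the proposition's estimate involves $\|D^3f\|_\infty$; your quantitative accounting via $\|D^2f\|_\infty\le 2L\|D^3f\|_\infty$ rests on the finite-sum construction that does not go through. To complete your argument you would need to replace the cyclic translation by such an irrational rotation (or supply some other mechanism that kills the invariant part of the obstruction), at which point you are essentially reconstructing the paper's proof.
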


\begin{prop}\label{MainProp2}
  Let $L>0 $, $\lambda\neq 0$, and let $h: (-2L,2L)^{2n} \rightarrow \mathbb{R} $ be a smooth function supported in $ (-L,L)^{2n} $ such that $\|Dh\|_{\infty}\leq |\lambda|/8$. 
Define $ v: (-2L,2L)^{2n} \rightarrow \mathbb{R} $ by $v(x_1,\ldots,x_{2n})=\lambda x_1$. There exists a Hamiltonian diffeomorphism $\Phi\in\Ham_{\mathrm{c}}((-2L,2L)^{2n})$ such that the equality $\Phi^* v-v=h$ holds on $(-L,L)^{2n}$.
\end{prop}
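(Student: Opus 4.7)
The equation $\Phi^*v-v=h$ on $(-L,L)^{2n}$ reduces, upon using $v=\lambda x_1$, to the scalar condition $\Phi_1(x)=x_1+h(x)/\lambda$ for $x\in(-L,L)^{2n}$, where $\Phi_1$ denotes the first coordinate of $\Phi$. My plan is to construct $\Phi$ as the time-$1$ map of a Hamiltonian isotopy obtained via Moser's trick, and to use the hypothesis $\|Dh\|_\infty\leq|\lambda|/8$ to control the flow quantitatively.

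Introduce the linear interpolation $v_t:=v+th$ for $t\in[0,1]$ and seek an isotopy $\{\Phi_t\}_{t\in[0,1]}\subset\Ham_{\mathrm{c}}((-2L,2L)^{2n})$ with $\Phi_0=\mathrm{id}$ and $\Phi_t^*v=v_t$ on $(-L,L)^{2n}$; then $\Phi:=\Phi_1$ will be the required diffeomorphism. Writing $\Phi_t$ as the flow of a time-dependent Hamiltonian $H_t$ and differentiating the relation in $t$, the identity $\{v,H_t\}=\lambda\,\partial_{x_2}H_t$ (valid for $v=\lambda x_1$) translates the condition $\tfrac{d}{dt}(\Phi_t^*v)=h$ into
\[
\partial_{x_2}H_t(y)\;=\;\lambda^{-1}(h\circ\Phi_t^{-1})(y)\qquad\text{for every } y\in\Phi_t((-L,L)^{2n}).
\]

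Since $h$ is supported in $(-L,L)^{2n}$, the right-hand side is supported in $\Phi_t((-L,L)^{2n})$, so a natural candidate is obtained by antidifferentiation in $x_2$ followed by a spatial cutoff:
\[
H_t(x)\;:=\;\lambda^{-1}\,\chi(x)\int_{-2L}^{x_2}(h\circ\Phi_t^{-1})(x_1,s,x_3,\ldots,x_{2n})\,ds,
\]
where $\chi$ is a smooth cutoff equal to $1$ on $(-3L/2,3L/2)^{2n}$ and supported in $(-2L,2L)^{2n}$. Such an $H_t$ is compactly supported in $(-2L,2L)^{2n}$, and whenever the inclusion $\Phi_t((-L,L)^{2n})\subset(-3L/2,3L/2)^{2n}$ holds, the required PDE for $H_t$ is satisfied on that set. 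The coupled ODE $\dot\Phi_t=X_{H_t}\circ\Phi_t$, in which $H_t$ itself depends on $\Phi_t$, is then to be solved by Picard iteration in an appropriate function space of isotopies.

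The hypothesis $\|Dh\|_\infty\leq|\lambda|/8$ is essential here: the factor $\lambda^{-1}$ in front of $h$ together with this bound yields a uniform $C^1$-control on $H_t$ strong enough both to keep orbits originating in $(-L,L)^{2n}$ inside $(-3L/2,3L/2)^{2n}$ throughout $t\in[0,1]$ (so that the cutoff $\chi$ never disturbs the PDE along those orbits) and to produce the contraction needed to close the fixed-point argument. The main technical obstacle is carrying out these quantitative estimates cleanly with the specific numerical constant $1/8$ — orbit-containment, compact support of $H_t$ in $(-2L,2L)^{2n}$, and contractivity of the Picard map must all be made to close simultaneously. Once this is accomplished, $\Phi:=\Phi_1$ lies in $\Ham_{\mathrm{c}}((-2L,2L)^{2n})$ and satisfies $\Phi^*v-v=h$ on $(-L,L)^{2n}$.
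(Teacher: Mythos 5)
Your reduction of $\Phi^*v-v=h$ to the transport equation $\lambda\,\partial_{x_2}H_t=h\circ\Phi_t^{-1}$ on $\Phi_t((-L,L)^{2n})$ is correct, and the Moser-type strategy is natural. The gap is in the claim that $\|Dh\|_\infty\leq|\lambda|/8$ yields ``uniform $C^1$-control on $H_t$''. Your $H_t$ is built from $h\circ\Phi_t^{-1}$, so for $j\neq 2$ its partial derivative $\partial_{x_j}H_t$ contains the term $\lambda^{-1}\chi\int_{-2L}^{x_2}\partial_{x_j}\bigl(h\circ\Phi_t^{-1}\bigr)\,ds$, and $\partial_{x_j}(h\circ\Phi_t^{-1})$ carries a factor $D\Phi_t^{-1}$. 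Bounding $D\Phi_t^{-1}$ requires, via Gronwall, a bound on $\|D^2H_s\|_\infty$ for $s\leq t$, which in turn involves $D^2h$ and second derivatives of $\Phi_s^{-1}$; the proposition assumes nothing about $\|D^2h\|_\infty$ or higher. Since all components of $X_{H_t}$ except the $x_1$-component are driven by exactly these uncontrolled partials, neither the orbit-containment estimate $\Phi_t((-L,L)^{2n})\subset(-3L/2,3L/2)^{2n}$ nor the contraction constant of the Picard map closes from $\|Dh\|_\infty\leq|\lambda|/8$ alone. The step you flag as ``the main technical obstacle'' is precisely where the argument breaks, and the assertion that the constant $1/8$ suffices for it is unsubstantiated.

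The paper avoids the fixed-point problem entirely. After rescaling to $L=\lambda=1$, Lemma~\ref{embeddingLemma} defines a symplectomorphism $\Psi_h$ \emph{explicitly} by requiring that the time-$t$ flow of $X_{x_1+h}$ issued from the hyperplane $\{x_2=1\}$ be sent to the time-$t$ flow of $X_{x_1}$; since $\partial(x_1+h)/\partial x_1>1/2$ this is a well-defined diffeomorphism, it is checked directly to be symplectic and to satisfy $\Psi_h^*x_1-x_1=h$ exactly, and the displacement bound $|\Psi_h^*x_i-x_i|\leq 2|1-x_2|\,\|Dh\|_\infty$ is obtained by integrating $X_{x_1+h}$ along its own orbits, so it involves only first derivatives of $h$. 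The isotopy $t\mapsto\Psi_{th}$ is then a Hamiltonian flow, and the cutoff step you envisage does work there because $4\|Dh\|_\infty\leq 1/2$ keeps $(-1,1)^{2n}$ inside $(-3/2,3/2)^{2n}$, where the cutoff equals $1$. To salvage your route you would need a construction whose quantitative control depends only on $\|Dh\|_\infty$ --- which is essentially what this explicit flow-matching achieves.
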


\subsection{Auxiliary lemmas} \label{subsec:AuxLemmas}

The proofs of Propositions \ref{MainProp1} and \ref{MainProp2} rely on several lemmas which we state and prove in this section.

\begin{lemma}\label{PoincareLemma}
    Let $ d \geq 1 $ be an integer. For any $f\in C^{\infty}_{\mathrm{c}}((-1,1)^d)$ satisfying $\int_{(-1,1)^d}f(x)dx=0$, one can find $f_1,\ldots,f_d\in C^{\infty}_{\mathrm{c}}((-1,1)^d)$ with the following properties:
    \begin{enumerate}
        \item $f=\sum_{j=1}^d f_j$,
        \item $(\forall\, 1\leq j\leq d)$ and any $(x_1,\ldots,x_{j-1},x_{j+1},\ldots,x_d)\in (-1,1)^{d-1}$ we have
        \[\int_{-1}^1 f_j(x_1,\ldots,x_d)dx_j=0,\]
        \item For every integer $ m \geq 1 $ we have the estimate $ (\forall\, 1\leq j\leq d)\,\|D^m f_j\|_{\infty}\leq C_{d,m} \|D^m f\|_{\infty}$, where $ C_{d,m} $ depends only on $ d $ and $ m $. 
    \end{enumerate}
\end{lemma}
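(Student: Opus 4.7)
The plan is to proceed by induction on the dimension $d$. The base case $d=1$ is immediate: take $f_1 = f$, since the zero-mean hypothesis on $f$ is exactly the required zero-mean-in-$x_1$ condition, and all derivative estimates are trivial with $C_{1,m}=1$.

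For the inductive step, fix once and for all a cut-off function $\chi \in C^\infty_c((-1,1))$ with $\int_{-1}^1 \chi(t)\,dt = 1$. Given $f\in C_c^\infty((-1,1)^d)$ with zero integral, define
\[
g(x_1,\ldots,x_{d-1}) := \int_{-1}^1 f(x_1,\ldots,x_d)\,dx_d,
\]
which lies in $C_c^\infty((-1,1)^{d-1})$ and has vanishing integral by Fubini. Set
\[
f_d(x_1,\ldots,x_d) := f(x_1,\ldots,x_d) - g(x_1,\ldots,x_{d-1})\,\chi(x_d).
\]
By construction $f_d$ has zero mean in the $x_d$-direction for each fixed $(x_1,\ldots,x_{d-1})$, and the remainder $f - f_d = g(x_1,\ldots,x_{d-1})\chi(x_d)$ is a product whose first factor is a compactly supported, zero-mean function on $(-1,1)^{d-1}$.

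Now apply the induction hypothesis to $g$ to write $g = \sum_{j=1}^{d-1} g_j$ with each $g_j\in C_c^\infty((-1,1)^{d-1})$ having zero mean in $x_j$, and $\|D^m g_j\|_\infty \leq C_{d-1,m}\|D^m g\|_\infty$. Then define $f_j(x_1,\ldots,x_d) := g_j(x_1,\ldots,x_{d-1})\,\chi(x_d)$ for $1 \leq j \leq d-1$. Each $f_j$ is supported in $(-1,1)^d$, vanishes in mean along $x_j$ (since $g_j$ does and $\chi$ is a scalar factor in $x_d$), and summed with $f_d$ recovers $f$.

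For the derivative estimates, differentiation under the integral and the fact that $x_d$ runs over an interval of length $2$ give $\|D^m g\|_\infty \le 2\|D^m f\|_\infty$ (only partials in $x_1,\ldots,x_{d-1}$ occur). For $j < d$, Leibniz applied to the product $g_j(x_1,\ldots,x_{d-1})\chi(x_d)$ yields $\|D^m f_j\|_\infty \leq C'_{d,m}\|\chi\|_{C^m}\|D^m g_j\|_\infty$, and a similar Leibniz estimate controls $\|D^m f_d\|_\infty$ by $\|D^m f\|_\infty$ and $\|D^m g\|_\infty$. All constants depend only on $d$, $m$, and the fixed function $\chi$. I do not anticipate any real obstacle: the argument is essentially bookkeeping, with the only mildly delicate point being that when we invoke the induction hypothesis for $g$, the ambient derivatives live in one fewer coordinate, so we must match them with the corresponding partial derivatives of $f$, which is immediate since differentiation in $x_1,\ldots,x_{d-1}$ commutes with integration in $x_d$.
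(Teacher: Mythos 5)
Your argument is essentially the paper's proof in inductive clothing: unrolling your induction (peeling off the last coordinate instead of the first) produces exactly the telescoping sum $f=\sum_k(\mathcal{R}_{k-1}F_{k-1}-\mathcal{R}_kF_k)$ that the paper writes down directly, with your $\chi$ playing the role of the fixed bump function $r$. Items 1 and 2 are fine.

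The one place you are too quick is the derivative bound, which is the only non-trivial part of the lemma. Leibniz applied to $g_j(x_1,\ldots,x_{d-1})\chi(x_d)$ does \emph{not} directly yield $\|D^m f_j\|_\infty\le C\|D^m g_j\|_\infty$: it yields a sum of terms $D^a g_j\cdot\chi^{(b)}$ with $a+b=m$, so a priori you only control $\|D^m f_j\|_\infty$ by $\max_{0\le a\le m}\|D^a g_j\|_\infty$ (the extreme term $g_j\cdot\chi^{(m)}$ involves $\|g_j\|_\infty$ itself). To promote this to a bound by $\|D^m g_j\|_\infty$ alone you need the elementary Poincar\'e-type inequality $\|D^a h\|_\infty\le\|D^{a+1}h\|_\infty$ for $h$ compactly supported in the unit cube (integrate the higher derivative along a coordinate from the boundary), applied repeatedly. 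The same remark applies to your estimate for $f_d=f-g\,\chi(x_d)$. You do have the needed hypothesis available --- you note that $g$ and the $g_j$ are compactly supported --- so this is a one-line repair, and the paper makes exactly this step explicit; but as written your Leibniz claim is not justified.
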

\begin{proof}
    Fix a smooth function $r \in C_c^\infty ((-1,1))$ such that $\int_{-1}^1 r(t)dt=2$. For each ${1\leq k\leq d}$ define the function
    \[\mathcal{R}_k(x_1,\ldots,x_d):=\prod_{i=1}^k r(x_i),\]
    and moreover put $ \mathcal{R}_0 \equiv 1 $.
    Define $F_0:=f$ and for $1\leq k\leq d$ we define a function
    \[F_k(x_1,\ldots,x_d):=\frac{1}{2^k}\cdot\int_{(-1,1)^k}f(y_1, \ldots, y_k, x_{k+1}, \ldots, x_d) \, dy_1dy_2\cdots dy_k.\]
    Note first that $ F_k(x) $ does not depend on the first $ k $ coordinates $ x_1, \ldots, x_k $, and $ \mathcal{R}_k(x) $ does not depends on $ x_{k+1}, \ldots, x_d $.
    Also note that $\mathcal{R}_k  F_k\in C^{\infty}_{\mathrm{c}}((-1,1)^{d})$ and for every $ x_1,\ldots,x_{k-1},x_{k+1},\ldots,x_d \in (-1,1) $ we have
    \begin{equation*}
    \begin{gathered}
    \int_{-1}^1 \mathcal{R}_k (x_1,\ldots,x_d) F_k (x_1,\ldots,x_d) dx_k \\ 
    =F_k (x_1,\ldots,x_{k-1},0,x_{k+1},\ldots,x_d)  \int_{-1}^1 \mathcal{R}_k (x_1,\ldots,x_d) \,dx_k \\
    = 2F_{k}(x_1,\ldots,x_{k-1},0,x_{k+1},\ldots,x_d) \mathcal{R}_{k-1}(x_1,\ldots,x_{k-1},0,x_{k+1},\ldots,x_d)
    \end{gathered}
    \end{equation*}
    and 
    \begin{equation*}
    \begin{gathered}
    \int_{-1}^1 \mathcal{R}_{k-1} (x_1,\ldots,x_d) F_{k-1} (x_1,\ldots,x_d) \,dx_{k} \\
    =\mathcal{R}_{k-1}(x_1,\ldots,x_{k-1},0,x_{k+1},\ldots,x_d) \int_{-1}^1 F_{k-1}(x_1,\ldots,x_d) dx_{k} \\
    = 2 \mathcal{R}_{k-1}(x_1,\ldots,x_{k-1},0,x_{k+1},\ldots,x_d) F_{k}(x_1,\ldots,x_{k-1},0,x_{k+1},\ldots,x_d).
    \end{gathered}
    \end{equation*}
    Finally, for every $1\leq k\leq d$ we define $f_k:=\mathcal{R}_{k-1}\cdot F_{k-1}-\mathcal{R}_k\cdot F_{k}$.
    From the above equalities we see that $\int_{-1}^1 f_k dx_k=2\mathcal{R}_{k-1} F_{k}-2F_k\mathcal{R}_{k-1}=0$. Moreover,
    \[\sum_{k=1}^d f_k=(f-\mathcal{R}_1F_1)+(\mathcal{R}_1F_1-\mathcal{R}_2F_2)+\ldots+(\mathcal{R}_{d-1}F_{d-1}-\mathcal{R}_dF_d)=f-\mathcal{R}_dF_d=f.\]
    It remains for a given integer $ m \geq 1 $ to bound the $L_{\infty}$-norms of $D^m f_1,\ldots,D^mf_d$. 
    For every $ \ell \geq 0 $ and $ 0 \leq k \leq d $ we have
    $$ \| D^\ell F_k \|_\infty \leq \|D^\ell f\|_\infty ,$$
    and since $ f $ is supported in $ (-1,1)^d $, for every $ \ell \geq 0 $ we have 
    $$ \| D^\ell f \|_\infty \leq \|D^{\ell+1} f\|_\infty .$$
    Hence
    \begin{equation*}
    \begin{gathered}
    \|D^mf_k\|_{\infty} = \| D^m (\mathcal{R}_{k-1}\cdot F_{k-1}-\mathcal{R}_k\cdot F_{k}) \|_\infty \leq C \sum_{\ell=0}^m (\| D^\ell F_{k-1} \|_\infty + \| D^\ell F_{k} \|_\infty) \\
    \leq 2 C \sum_{\ell=0}^m \| D^\ell f \|_\infty \leq 2(m+1)C \| D^m f \|_\infty,
    \end{gathered}
    \end{equation*}
    where $ C $ depends only on the choice of the function $ r $, on $ d $ and $ m $.
\end{proof}

\begin{lemma}\label{EmbeddingLemma}
    There exists a smooth embedding $\Psi:(\mathbb{R}/2\pi\mathbb{Z})\times[-1,1]^{2n-1}\hookrightarrow (-2,2)^{2n}$ and a Hamiltonian diffeomorphism $\Phi\in\Ham_{\mathrm{c}}((-2,2)^{2n})$ such that:
    \begin{enumerate}
        \item $\Phi\circ\Psi(t,x)=\Psi(t+2\sqrt{2}\pi,x)$ for every $ (t,x) \in (\mathbb{R}/2\pi\mathbb{Z})\times[-1,1]^{2n-1} $.
        \item $ \Psi(t,x) = (t,x) $ for every $ (t,x) \in [-1,1]^{2n}\subset (\mathbb{R}/2\pi\mathbb{Z})\times[-1,1]^{2n-1} $.
    \end{enumerate}
\end{lemma}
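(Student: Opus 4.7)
My plan is to construct $\Psi$ as a symplectic tubular neighborhood around a carefully chosen embedded closed curve, and then to produce $\Phi$ as the time-$2\sqrt{2}\pi$ map of an autonomous Hamiltonian flow on $(-2,2)^{2n}$. First I fix a smooth embedded closed curve $\gamma \colon S^1 \to (-2,2)^2$ with $\gamma(t) = (t,0)$ on $t \in [-1,1]$ and (continuous) unit normal $N(t) = (0,1)$ there, chosen with sufficiently mild curvature so that the tubular neighborhood map is a smooth embedding $S^1\times[-1,1] \hookrightarrow (-2,2)^2$. I then define
\[
\widetilde{\Psi}(t, x_2, x_3,\ldots, x_{2n}) := (\gamma(t) + x_2 N(t),\; x_3, \ldots, x_{2n}),
\]
a smooth embedding of $S^1 \times [-1,1]^{2n-1}$ into $(-2,2)^{2n}$ satisfying condition (2) by construction. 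A direct calculation yields $\widetilde{\Psi}^*\omega = (|\gamma'(t)| - x_2\theta'(t))\, dt\wedge dx_2 + \sum_{i=2}^{n} dx_{2i-1}\wedge dx_{2i}$, where $\theta(t)$ is the angle of $N(t)$; this agrees with the standard cylinder form $\omega_{\mathrm{cyl}} := dt\wedge dx_2 + \sum_{i=2}^n dx_{2i-1}\wedge dx_{2i}$ on $[-1,1]^{2n}$ (where $|\gamma'|=1$ and $\theta' = 0$).

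Next I apply a relative Moser's trick to promote $\widetilde{\Psi}$ to a symplectic embedding. Choosing the parameterization of $\gamma$ so that $|\gamma'| - x_2\theta' > 0$ everywhere (this is the key geometric constraint), both $\omega_{\mathrm{cyl}}$ and $\widetilde{\Psi}^*\omega$ are symplectic and the linear interpolation $\omega_s := (1-s)\omega_{\mathrm{cyl}} + s\widetilde{\Psi}^*\omega$ consists of symplectic forms. The closed $2$-form $\widetilde{\Psi}^*\omega - \omega_{\mathrm{cyl}}$ vanishes on $[-1,1]^{2n}$, and since $H^2$ of the cylinder is zero it has a primitive $\beta$; the $H^1$-ambiguity in $\beta$ (the class $[dt]$ plus exact terms) together with contractibility of $[-1,1]^{2n}$ lets me arrange $\beta \equiv 0$ on $[-1,1]^{2n}$. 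The Moser vector field $Y_s$ defined by $\iota_{Y_s}\omega_s = -\beta$ then vanishes on $[-1,1]^{2n}$, so its time-$1$ flow $\rho$ is identity there and satisfies $\rho^*(\widetilde{\Psi}^*\omega) = \omega_{\mathrm{cyl}}$. Setting $\Psi := \widetilde{\Psi}\circ\rho$ gives a symplectic embedding with $\Psi^*\omega = \omega_{\mathrm{cyl}}$, still satisfying (2).

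Finally I construct $\Phi$. On the cylinder, the shift $R_s(t,x) := (t+s,x)$ is the time-$s$ flow of the Hamiltonian $h(t,x) := x_2$, since $\iota_{\partial_t}\omega_{\mathrm{cyl}} = dx_2 = dh$. Pushing forward via the symplectic $\Psi$, the function $\widehat{H} := x_2 \circ \Psi^{-1}$ on $\Psi(S^1\times[-1,1]^{2n-1})$ has Hamiltonian vector field $\Psi_*(\partial_t)$, which is tangent to the image. Extending $\Psi$ first to a slightly larger open cylinder $S^1 \times (-1-\delta, 1+\delta)^{2n-1}$ (so that $\widehat{H}$ extends smoothly to a neighborhood of $\Psi(S^1\times[-1,1]^{2n-1})$) and then multiplying by a smooth cutoff equal to $1$ on that image, I obtain a smooth compactly supported $H$ on $(-2,2)^{2n}$ that agrees with $\widehat{H}$ on an open neighborhood of the image. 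Then $\Phi := \phi_H^{2\sqrt{2}\pi} \in \Ham_{\mathrm{c}}((-2,2)^{2n})$ preserves the image and restricts there to $\Psi \circ R_{2\sqrt{2}\pi} \circ \Psi^{-1}$, giving condition (1).

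The hardest step is the initial geometric construction of $\gamma$: the closed tube of width $1$ in the normal direction must fit inside $(-2, 2)^2$, which constrains the curve to remain at distance $\geq 1$ from $\partial(-2, 2)^2$ and forces curvature strictly less than $1$. Since the pinned segment $[-1,1]\times\{0\}$ already occupies a unit-width strip, the remaining portion of $\gamma$ must loop through the outer annular region while respecting these bounds; this typically requires a non-arc-length parameterization in which $|\gamma'|$ grows along the curved portion so as to preserve $|\gamma'(t)| - x_2 \theta'(t) > 0$. The relative Moser argument, once $\gamma$ is in place, is essentially routine.
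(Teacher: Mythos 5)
Your overall strategy --- embed an annulus symplectically into $(-2,2)^2$ extending the identity on $[-1,1]^2$, take the product with the identity in the remaining factors, and let $\Phi$ be the time-$2\sqrt2\pi$ flow of (an extension of) $x_2\circ\Psi^{-1}$ --- is exactly the paper's, and the last step is fine. The fatal problem is the first step: the curve $\gamma$ you need does not exist. Your non-degeneracy condition $|\gamma'(t)|-x_2\theta'(t)>0$ for all $x_2\in[-1,1]$ is precisely the statement that the curvature of $\gamma$ is less than $1$ in the arc-length sense. Now parametrize the arc of $\gamma$ leaving $(1,0)$ by arc length, with tangent angle $\theta(0)=0$ and $|\theta'(s)|\le 1$. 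Since the curve must return to $(-1,0)$, its $x_1$-coordinate must eventually decrease, so there is a first $s_0$ with $|\theta(s_0)|=\pi/2$; necessarily $s_0\ge\pi/2$ and $|\theta(s)|\le\min(s,\pi/2)$ for $s\le s_0$, whence
$x_1(s_0)\;=\;1+\int_0^{s_0}\cos\theta(s)\,ds\;\ge\;1+\int_0^{\pi/2}\cos s\,ds\;=\;2.$
So $\gamma$ itself (hence its tube) exits $(-2,2)^2$. A second, independent obstruction: Moser's trick on the compact annulus requires $\int\widetilde\Psi^*\omega=\int\omega_{\mathrm{cyl}}$, i.e.\ $2L(\gamma)=4\pi$ (otherwise the primitive $\beta$ exists but the Moser flow cannot stay inside the domain); yet an embedded closed curve has total turning $2\pi=\int|\kappa|\,ds<L(\gamma)$ when $|\kappa|<1$, so $2L(\gamma)>4\pi$ always. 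The subsequent Moser step cannot rescue this, since $\rho$ is a diffeomorphism of the domain and leaves the (impossible) image unchanged.

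The underlying issue is that a metric tubular neighborhood of unit radius is far too rigid. The paper instead uses only that the annulus $(\mathbb{R}/2\pi\mathbb{Z})\times[-1,1]$ has area $4\pi<16$: an \emph{area-preserving} embedding may send the complementary strip $[1,2\pi-1]\times[-1,1]$ to a long, arbitrarily \emph{thin} band of area $4\pi-4$ that snakes through $(-2,2)^2\setminus[-1,1]^2$ (area $12$) and reconnects the two vertical edges of $[-1,1]^2$. If you want a Moser-style proof, you should build such a thin band directly (e.g.\ allow the normal field to have variable, mostly small, length equal to $1$ only over $[-1,1]$, and arrange the total area to be exactly $4\pi$ before applying Moser); as written, your construction cannot be completed.
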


\begin{proof}
    Consider the annulus $ (\mathbb{R}/2\pi\mathbb{Z})\times [-1,1] \ni (t,x) $ with the symplectic form $ dt \wedge dx $. Since the area of the annulus is 
    $ 4 \pi < 16 $, we can find a symplectic embedding $\psi:(\mathbb{R}/2\pi\mathbb{Z})\times [-1,1]\hookrightarrow (-2,2)^2$. Moreover we may 
    assume that for any $ (t,x) \in [-1,1]^2 \subset (\mathbb{R}/2\pi\mathbb{Z})\times [-1,1] $ we have $ \psi(t,x) = (t,x) $ (see figure 1 below).
    
    \begin{figure}[h]
        \centering
        \includegraphics[scale=1]{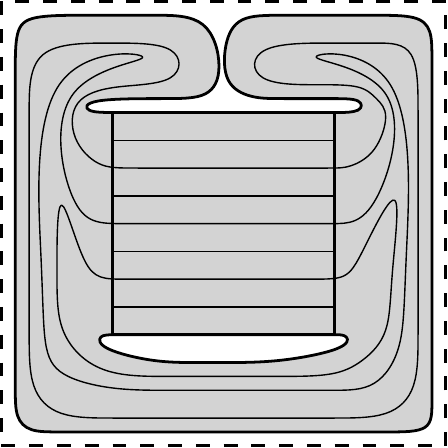}
        \caption{The image of the map $\psi$}
        \label{Sketch}
    \end{figure}
    \noindent Now define a symplectic embedding $\Psi:(\mathbb{R}/2\pi\mathbb{Z})\times[-1,1]^{2n-1}\hookrightarrow (-2,2)^{2n}$ by 
    \[\Psi(t,x_2,\ldots,x_{2n})=\psi(t,x_2) \times\mathrm{Id}(x_3,\ldots,x_{2n}).\]
    Let $H\in C^{\infty}_c((-2,2)^{2n})$ be such that $H\circ\Psi(t,x_2,x_3,\ldots,x_{2n})=x_2 $. Finally, define $\Phi$ as a time-$2\sqrt{2}\pi$ map 
    of the Hamiltonian flow generated by $H$. Both stated properties are clearly satisfied.
\end{proof}
\begin{lemma}\label{rotation-Lemma}
    For every compactly supported smooth function $f:(\mathbb{R}/2\pi\mathbb{Z})\times (-1,1)^m \rightarrow\mathbb{R}$ which satisfies $\int_{0}^{2\pi}f(t,x)dt=0$ for all $x\in (-1,1)^m$, there exists a compactly supported smooth function $g:(\mathbb{R}/2\pi\mathbb{Z})\times (-1,1)^m \rightarrow\mathbb{R}$ such that
    \begin{enumerate}
        \item $f(t,x)=g(t+2\sqrt{2}\pi,x)-g(t,x)$ for every $ (t,x) \in (\mathbb{R}/2\pi\mathbb{Z})\times (-1,1)^m $.
        \item $\|Dg\|_{\infty}\leq 2 \|D^3 f\|_{\infty}$.
    \end{enumerate}
\end{lemma}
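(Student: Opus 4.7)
I would solve the cohomological equation $g(t+\alpha,x)-g(t,x)=f(t,x)$ with $\alpha:=2\sqrt{2}\pi$ by Fourier analysis in the angular variable $t$. Since $f$ has zero mean in $t$, expand
$$f(t,x)=\sum_{k\in\mathbb{Z}\setminus\{0\}} c_k(x)\,e^{ikt},$$
with smooth, compactly supported coefficients $c_k:(-1,1)^m\rightarrow \mathbb{C}$, and look for
$$g(t,x)=\sum_{k\ne 0} d_k(x)\,e^{ikt},\qquad d_k(x)=\frac{c_k(x)}{e^{ik\alpha}-1}.$$
If the series converges reasonably, then $g$ automatically satisfies the translation equation termwise, and the compact support of $g$ in $x$ is inherited from that of the $c_k$'s, giving the support statement needed in conjunction with (2).

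The crucial input is the Diophantine nature of $\sqrt{2}$. Since $2k^{2}-n^{2}$ is a nonzero integer for every $k\ne 0$ and $n\in\mathbb{Z}$, one gets $|k\sqrt{2}-n|\ge 1/|k\sqrt{2}+n|\ge C/|k|$ for $n$ nearest to $k\sqrt{2}$, whence
$$|e^{ik\alpha}-1|=2|\sin(\pi\sqrt{2}\,k)|\ge C_{0}/|k|$$
for an explicit constant $C_{0}>0$. Combined with the standard estimate $|\partial_x^\beta c_k(x)|\le \|\partial_t^r\partial_x^\beta f\|_\infty/|k|^{r}$ from $r$-fold integration by parts in $t$, this yields
$$|\partial_x^\beta d_k(x)|\le \|D^{r+|\beta|}f\|_\infty\cdot |k|^{1-r}/C_{0}.$$
Taking $r=3$ gives $|d_k(x)|\le C\,\|D^{3}f\|_\infty/k^{2}$, so the series defines a continuous function $g$ with the required compact support, and analogous estimates with $|\beta|\ge 1$ and larger $r$ show that $g$ is smooth.

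The main obstacle is extracting the uniform bound on $Dg$, especially on $\partial_t g=\sum ik\,d_k\,e^{ikt}$: the naive termwise estimate only produces $\sum|k||d_k|\le C\,\|D^{3}f\|_\infty\sum 1/|k|$, which just barely diverges. To close this, rather than estimating term by term I would use Abel summation against partial Dirichlet sums of $\sum e^{ikt}/(e^{ik\alpha}-1)$, exploiting that the weights $(e^{ik\alpha}-1)^{-1}$ have controlled variation in $k$ (again thanks to the Diophantine lower bound on $|e^{ik\alpha}-1|$). This oscillation argument is exactly where the choice of a badly approximable $\alpha$ is essential; it lets three integrations by parts on $f$ beat the linear-in-$|k|$ small denominator, and the constant $2$ in the conclusion emerges after optimizing the resulting numerical estimate using the explicit Diophantine constant for $\sqrt{2}$.
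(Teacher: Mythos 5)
Your setup coincides with the paper's: expand in Fourier series in $t$, divide by $e^{ik\alpha}-1$, and use the lower bound $|e^{ik\alpha}-1|\ge 4/(2\sqrt2|k|+1)$ coming from $|2k^2-n^2|\ge 1$. You also correctly identify the one delicate point, namely that the termwise bound on $\partial_t g$ gives $\sum_k 1/|k|$. But your proposed fix is where the argument breaks. Summation by parts against the partial sums $W_N(t)=\sum_{0<|k|\le N}e^{ikt}/(e^{ik\alpha}-1)$ requires those partial sums to be uniformly bounded (or at least to grow slowly enough to be beaten by the decay of the coefficient differences), and they are not: since $\mathrm{Re}\,\bigl(1/(e^{i\theta}-1)\bigr)=-1/2$ for every $\theta\notin 2\pi\mathbb{Z}$, one has $\mathrm{Re}\,W_N(0)=-N$, so $|W_N|$ grows linearly. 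Meanwhile the differences $|a_k-a_{k+1}|$ of the numerators $a_k=ik\,c_k(x)$ decay in general no faster than $|a_k|=O(\|D^3f\|_\infty/k^2)$ itself, so the summed-by-parts series $\sum|a_k-a_{k+1}|\,|W_k|$ is again of order $\sum 1/k$ and does not converge. The claim that the weights $(e^{ik\alpha}-1)^{-1}$ have ``controlled variation'' is also not usable: consecutive values oscillate with differences as large as $O(k^2)$.

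The correct and much simpler resolution, which is what the paper does, is Cauchy--Schwarz plus Parseval. Writing $ik\,\widehat f(k,x)=-\widehat{\partial_t^3f}(k,x)/\bigl(k^2\,(\text{unimodular factor})\bigr)$, one estimates
\begin{equation*}
\Bigl|\frac{\partial g}{\partial t}(t,x)\Bigr|\le\sum_{k\ne0}\frac{|\widehat{\partial_t^3f}(k,x)|}{k^2|e^{ik\alpha}-1|}
\le\Bigl(\sum_{k\ne0}|\widehat{\partial_t^3f}(k,x)|^2\Bigr)^{1/2}\Bigl(\sum_{k\ne0}\frac{1}{k^4|e^{ik\alpha}-1|^2}\Bigr)^{1/2},
\end{equation*}
where the first factor is at most $\|D^3f\|_\infty$ by Parseval and the second is finite because $1/(k^2|e^{ik\alpha}-1|)=O(1/|k|)$ is square-summable even though it is not summable. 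Evaluating the second factor with the explicit Diophantine constant gives $(2\sqrt2+1)\pi/(4\sqrt3)\le2$, which is exactly where the constant $2$ comes from; the bound on $\partial_{x_j}g$ is obtained the same way using $\partial_{x_j}\partial_t^2 f$. So the lemma is true and your scaffolding is right, but you need to replace the Abel-summation step with this $\ell^2$ argument.
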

\begin{proof}
    Denote $\alpha=2\sqrt{2}\pi$. Let
    \[\widehat{f}(n,x):=\frac{1}{2\pi}\int_{0}^{2\pi}f(t,x)e^{-int}dt \]
    be the $n$-th Fourier coefficient of $f$ (with respect to the variable $ t $). The condition $\int_{0}^{2\pi}f(t,x)dt=0$ ($\forall x\in (-1,1)^m$) means that 
    $ \widehat{f}(0,x) = 0 $ for all $ x\in (-1,1)^m $. Now we define
    \begin{equation} \label{eq:deffh}
     g(t,x):=\sum_{n \neq 0}\frac{\widehat{f}(n,x)}{e^{in\alpha}-1}\cdot e^{int}.
    \end{equation}
    For addressing the convergence and properties of this series, we first derive (well-known) lower bounds for $ |e^{in\alpha}-1| $.
    
    Let $ n \in \mathbb{Z} \setminus \{ 0 \} $. Denote by $ \lambda \in (0,1/2) $ the distance from $ \sqrt{2}n $ to $ \mathbb{Z} $. Then 
    $$ | e^{in\alpha} -1| = 2|\sin(n\alpha /2)| = 2|\sin(\sqrt{2} \pi n)| = 2 |\sin(\pi \lambda)| \geqslant 4 \lambda .$$
    If $ m \in \mathbb{Z} $ is such that $ \lambda = |\sqrt{2}n-m| $ then $ m $ has the same sign as $ n $, and $ |m| < \sqrt{2}|n|+1 $. Therefore
    $$ \lambda = |\sqrt{2}n-m| = \frac{|2n^2-m^2|}{|\sqrt{2}n+m|} \geqslant \frac{1}{|\sqrt{2}n+m|} > \frac{1}{2\sqrt{2}|n|+1} .$$
    We conclude 
    $$ | e^{in\alpha} -1| > \frac{4}{2\sqrt{2}|n|+1} ,$$
    hence the smoothness of the function $ f $ implies smoothness of $ g $ and the uniform convergence in $(\ref{eq:deffh})$. In particular,
    $$ g(t+\alpha,x)-g(t,x) = \sum_{n \neq 0} \widehat{f}(n,x) e^{int} = f(t,x) .$$ Moreover, it is clear from the definition of $ g $ that it has a compact support in 
    $ {(\mathbb{R}/2\pi\mathbb{Z})\times (-1,1)^m} $.
    
    It remains to show $\|Dg\|_{\infty}\leq 2 \|D^3 f\|_{\infty}$. First of all, we have 
    \[ \frac{\partial g}{\partial t}(t,x) = i \sum_{n \neq 0}\frac{n \widehat{f}(n,x)}{e^{in\alpha}-1} e^{int} = 
    -\sum_{n \neq 0} \frac{\widehat{\partial^3f/\partial t^3}(n,x)}{n^2(e^{in\alpha}-1)} e^{int} , \]
    hence we get
    \[ \left| \frac{\partial g}{\partial t}(t,x) \right| \leq \sum_{n \neq 0}\frac{|\widehat{\partial^3f/\partial t^3}(n,x)|}{n^2 |e^{in\alpha}-1|} \leq
    \left( \sum_{n \neq 0} |\widehat{\partial^3f/\partial t^3}(n,x)|^2 \right)^{\frac{1}{2}}  \left( \sum_{n \neq 0}\frac{1}{n^4 |e^{in\alpha}-1|^2} \right)^{\frac{1}{2}} \]
    \[ =  \left( \frac{1}{2\pi} \int_0^{2\pi} \left|\frac{\partial^3f}{\partial t^3}(s,x) \right|^2 \, ds \right)^{\frac{1}{2}}  
    \left( \sum_{n \neq 0}\frac{1}{n^4 |e^{in\alpha}-1|^2} \right)^{\frac{1}{2}} 
    \leq \|D^3 f\|_{\infty} \left( \sum_{n \neq 0}\frac{(2\sqrt{2}|n|+1)^2}{16 n^4} \right)^{\frac{1}{2}} \]
    \[ \leq \frac{2\sqrt{2}+1}{4}  \|D^3 f\|_{\infty} \left( \sum_{n \neq 0} \frac{1}{n^2} \right)^{\frac{1}{2}} 
    =\frac{(2\sqrt{2}+1) \pi }{4\sqrt{3}}  \|D^3 f\|_{\infty} \leq 2 \|D^3 f\|_{\infty} .\]
    In addition, for every $1\leq k \leq m$ we have
    \[ \frac{\partial g}{\partial x_k}(t,x) = \sum_{n \neq 0} \frac{\widehat{\partial f/\partial x_k}(n,x)}{e^{in\alpha}-1} e^{int} 
    = -\sum_{n \neq 0} \frac{\widehat{\partial^3f/\partial x_k \partial t^2}(n,x)}{n^2(e^{in\alpha}-1)} e^{int} , \]
    and hence in a similar way we conclude
    $ \left| \frac{\partial g}{\partial x_k}(t,x) \right| \leq 2 \|D^3 f\|_{\infty}$.
\end{proof}

\begin{lemma}\label{embeddingLemma}
    Let $h : \mathbb{R}^{2n} \rightarrow \mathbb{R} $ be a smooth function supported in $ (-1,1)^{2n} $, such that $|\partial h/\partial x_1|<1/2$. Then there exists a unique diffeomorphism $\Psi_h:\mathbb{R}^{2n} \rightarrow \mathbb{R}^{2n}$ which satisfies
    \begin{equation} \label{eq:Psihdef}
    \begin{gathered}
     \Psi_h\left(\phi^t_H(x_1,1,x_3,x_ 4,\ldots,x_{2n-1},x_{2n})\right) =\phi^t_{x_1}(x_1,1,x_3,x_ 4,\ldots,x_{2n-1},x_{2n}) \\
     =(x_1,1-t,x_3,x_ 4,\ldots,x_{2n-1},x_{2n}) 
    \end{gathered}
    \end{equation}
    for every $ t,x_1,x_3,x_4,\ldots,x_{2n} \in \mathbb{R} $, 
    where $H:=x_1+h$. Moreover, $ \Psi_h $ preserves the standard symplectic form $\omega =\sum_{i=1}^n dx_{2i-1} \wedge dx_{2i} $ and satisfies
    \begin{enumerate}
        \item $\Psi_h^* x_1-x_1=h $,
        \item $(\forall\,1\leq i\leq 2n)\,|\Psi_{h}^*x_{i}-x_{i}|\leq 2|1-x_2|\cdot\|dh\|_{\infty}$.
    \end{enumerate}
\end{lemma}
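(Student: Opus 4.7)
The plan is to construct $\Psi_h$ as a composition $\Phi_{x_1}\circ\Phi_H^{-1}$ of two ``straightening'' diffeomorphisms, each spreading out the hyperplane $\{x_2=1\}$ via a Hamiltonian flow. First I would compute that the $x_2$-component of $X_H$ equals $-1-\partial h/\partial x_1$, which by the hypothesis $|\partial h/\partial x_1|<1/2$ is always in $(-3/2,-1/2)$; combined with the fact that $h\equiv 0$ outside $(-1,1)^{2n}$, this forces every $\phi^t_H$-trajectory to meet $\{x_2=1\}$ at a unique point and a unique time. Consequently the map
\[ \Phi_H(u,s,y_3,\ldots,y_{2n}):=\phi^s_H(u,1,y_3,\ldots,y_{2n}) \]
is a global diffeomorphism of $\mathbb{R}^{2n}$, the corresponding $\Phi_{x_1}(u,s,y)=(u,1-s,y_3,\ldots,y_{2n})$ is trivially linear, and setting $\Psi_h:=\Phi_{x_1}\circ\Phi_H^{-1}$ gives the unique diffeomorphism satisfying (\ref{eq:Psihdef}).

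For symplecticity I would prove the stronger statement $\Phi_H^*\omega=\Phi_{x_1}^*\omega$. Using $\iota_{X_H}\omega=dH$, the conservation of $H$ along its flow, and the vanishing of $h$ on $\{x_2=1\}$, one gets $H\circ\Phi_H=u$ and hence $\iota_{\partial/\partial s}(\Phi_H^*\omega)=du$. Writing $\Phi_H^*\omega=ds\wedge du+\eta$ with $\eta$ a $ds$-free 2-form, the closedness $d(\Phi_H^*\omega)=0$ forces $\partial_s\eta=0$; its value at $s=0$, where $\Phi_H$ restricts to the embedding $(u,y)\mapsto (u,1,y_3,\ldots,y_{2n})$, pins down $\eta=dy_3\wedge dy_4+\cdots+dy_{2n-1}\wedge dy_{2n}$. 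A direct calculation yields the same expression for $\Phi_{x_1}^*\omega$, so $\Psi_h^*\omega=\omega$.

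Property (1) is then immediate: with $p=\Phi_H(u,s,y)$, conservation gives $H(p)=u$, while by definition $x_1(\Psi_h(p))=u$, so $\Psi_h^*x_1-x_1=H-x_1=h$. For (2), I would integrate the components of $X_H$ along the orbit segment from $(u,1,y)$ to $p$: each coordinate velocity differs from its unperturbed value ($-1$ in the $x_2$ direction and $0$ in the others) only by a partial derivative of $h$, so the difference between the initial and final value of any $x_i$ is bounded by $|s|\cdot\|dh\|_\infty$. The first-step monotonicity estimate gives $|1-x_2(p)|\in[|s|/2,3|s|/2]$ and hence $|s|\le 2|1-x_2(p)|$, which yields $|\Psi_h^*x_i-x_i|\le 2|1-x_2|\,\|dh\|_\infty$ for every $i$. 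The main technical point is the symplecticity computation via Cartan calculus; the rest is straightforward bookkeeping of flow estimates.
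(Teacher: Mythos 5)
Your proposal is correct, and the construction is the same as the paper's: you build the same map $\Psi_h=\Phi_{x_1}\circ\Phi_H^{-1}$ (the paper calls $\Phi_H$ the diffeomorphism $F(t,x)=\phi^t_H(x)$ of $\mathbb{R}\times\{x_2=1\}$ onto $\mathbb{R}^{2n}$), justified by the same observation that the $x_2$-velocity of $X_H$ lies in $(-3/2,-1/2)$; your proofs of property (1) (conservation of $H$ gives $\Psi_h^*x_1=H$) and of property (2) (integrating the components of $X_H$ and using $|s|\leq 2|1-x_2|$) coincide with the paper's.

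The one place where you genuinely diverge is the symplecticity check. The paper verifies $\Psi_h^*\omega=\omega$ pointwise, splitting $T_p\mathbb{R}^{2n}$ into $T_p\phi^t_H(V)\oplus\mathbb{R}X_H(p)$: on pairs of vectors tangent to the level hypersurface $\phi^t_H(V)$ it uses that $\Psi_h$ restricts there to the symplectomorphism $\phi^t_{x_1}\circ(\phi^t_H)^{-1}$, and for pairs involving $X_H$ it uses flow-invariance of $\omega$ together with $H=x_1$ near $V$. You instead prove the equivalent identity $\Phi_H^*\omega=\Phi_{x_1}^*\omega$ in the $(u,s,y)$ coordinates via Cartan calculus: $\iota_{\partial_s}(\Phi_H^*\omega)=d(H\circ\Phi_H)=du$ pins down the $ds$-part, closedness forces the remaining part $\eta$ to be $s$-independent, and evaluation at $s=0$ identifies $\eta$. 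Both arguments are standard and complete; yours packages the two cases of the paper into a single Moser-style computation, while the paper's is more elementary (no exterior calculus beyond $\iota_{X_H}\omega=dH$). Either way the lemma is proved.
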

\begin{proof}
Let $V=\{x_2=1\}\subset\mathbb{R}^{2n}$. Since we have $\partial H/\partial x_1>1/2$, we conclude that the second coordinate of the Hamiltonian vector field $ X_H $ at each point is bounded from above by $ -1/2 $.  Hence the map
\[F:\mathbb{R}\times V\rightarrow\mathbb{R}^{2n},\quad F(t,x)=\phi^t_H(x)\]
is a diffeomorphism, and the diffeomorphism $ \Psi_h $ is uniquely defined by $ (\ref{eq:Psihdef}) $. Let us show that $ \Psi_h $ is symplectic.
    \begin{enumerate}[label=(\Roman*)]
        \item \underline{$\omega(X_1,X_2)=\omega((\Psi_{h})_* X_1,(\Psi_{h})_* X_2)$ for $X_1,X_2\in T\phi^t_H(V)$:}\\
        Note that $\Psi_{h}|_{\phi^t_H(V)}=\phi^t_{x_1}\circ\left(\phi^t_H\right)^{-1}|_{\phi^t_H(V)}$, so the equality follows from the fact that $\phi^t_{x_1}\circ\left(\phi^t_H\right)^{-1}$ is symplectic.
        \item \underline{$\omega(X_H,X)=\omega((\Psi_{h})_* X_H,(\Psi_{h})_* X)$ for $X\in T\phi^t_H(V)$:}\\
        Since $ \phi^t_H $ and $ \phi^t_{x_1} $ preserve $ \omega $, for each $ Y \in TV $ we get  
        $$ \omega(X_H,(\phi^t_H)_* Y)=\omega(X_H,Y)= \omega(X_{x_1},Y)=\omega(X_{x_1},(\phi^t_{x_1})_*Y), $$
        where the second equality holds since $ H = x_1 $ near $ V $.
        On the other hand, by differentiating the equality $(\ref{eq:Psihdef})$ with respect to $ t $ we get $(\Psi_{h})_* X_H= X_{x_1} \circ \Psi_{h} $ on $ \phi^t_H(V)$. Moreover, from $(\ref{eq:Psihdef})$ we get $(\Psi_{h})_*(\phi^t_{H})_* Y=(\phi^t_{x_1})_* Y$ for each $ Y \in TV $. Hence for every $X \in T\phi^t_H(V)$,
        denoting $ Y := (\phi^t_H)^*X \in TV $, we get
        $$ \omega(X_H,X) =  \omega(X_H,(\phi^t_H)_* Y) = \omega(X_{x_1},(\phi^t_{x_1})_*Y) = \omega((\Psi_{h})_* X_H,(\Psi_{h})_* X)  .$$
    \end{enumerate}
    From (I) and (II) we conclude that $\Psi_{h}$ preserves $\omega$.\\
    
    To show the point 1. in the lemma, note that the equality
    \[(\Psi_{h}\circ\phi^t_H)^*x_1=x_1= H=H\circ\phi^t_H\]
    holds on $V$ for every $ t \in \mathbb{R} $. Since the map $F:\mathbb{R}\times V\rightarrow\mathbb{R}^{2n},(t,x)\mapsto\phi^t_H(x)$ is a diffeomorphism, we conclude that $\Psi_{h}^*x_1=H=x_1+h$, or equivalently $\Psi_{h}^*x_1-x_1=h$.\\
    
    Finally, we get the following inequalities on $V$, for every $ t \in \mathbb{R} $. For $ 1 \leq i \leq n $, 
    \[ |(\Psi_{h}\circ\phi^t_H)^*x_{2i-1}-(\phi^t_H)^*x_{2i-1}|=|x_{2i-1}-(\phi^t_H)^*x_{2i-1}|=\Big|\int_0^t\frac{\partial h}{\partial x_{2i}}\circ\phi^s_H \, ds\Big|\leq |t|\cdot\|Dh\|_{\infty}.\]
    We also have
    \[|(\Psi_{h}\circ\phi^t_H)^*x_2-(\phi^t_H)^*x_2|=|1-t-(\phi^t_H)^*x_2|=\Big|-t+\int_0^t(1+\frac{\partial h}{\partial x_1}\circ\phi^s_H) \, ds \Big|\leq |t|\cdot\|Dh\|_{\infty} .\]
    Finally, for all $ 2\leq i\leq n $ we have
    \[ |(\Psi_{h}\circ\phi^t_H)^*x_{2i}-(\phi^t_H)^*x_{2i}|=|x_{2i}-(\phi^t_H)^*x_{2i}|=\Big|\int_0^t\frac{\partial h}{\partial x_{2i-1}}\circ\phi^s_H \, ds \Big|\leq |t|\cdot\|Dh\|_{\infty}.\]
    
    \noindent For $p=(x_1,x_2,\ldots,x_{2n-1},x_{2n})\in\mathbb{R}^{2n}$ define $t:=\pi_1\circ F^{-1}(p)$, where $\pi_1$ is the projection to the first coordinate in $\mathbb{R}\times V$. Since $\partial H/\partial x_1>1/2$ we conclude that $ |t| \leqslant 2 |x_2-1|$, therefore we get the point 2. in the lemma.
\end{proof}

\subsection{Proof of Proposition \ref{MainProp1}}

We first claim that by rescaling we can without loss of generality assume that $ L = 1 $. Indeed, let $L>0$ and $f\in C^{\infty}_{\mathrm{c}}((-L,L)^{2n})$. Define $\widehat{f}\in C^{\infty}_{\mathrm{c}}((-1,1)^{2n})$, $\widehat{f}(x):=\frac{1}{L}f(Lx)$. Applying the case $ L = 1 $ of the statement of the proposition to $\widehat{f}$, we obtain $\widehat{g}_1,\ldots,\widehat{g}_{2n}\in C^{\infty}_{\mathrm{c}}((-2,2)^{2n})$ and $\widehat{\Phi}_1,\ldots,\widehat{\Phi}_{2n}\in\Ham_{\mathrm{c}}((-2,2)^{2n})$ such that $f=\sum_{i=1}^{2n}\widehat{\Phi}^*_i\,\widehat{g}_i-\widehat{g}_i$ and $\|D\widehat{g}_i\|_{\infty}\leq C\|D^3\widehat{f}_i\|_{\infty}$. For each $1\leq i\leq 2n$ define $\Phi_i\in\Ham_{\mathrm{c}}((-2L,2L)^{2n})$, $\Phi_i(x):=L\widehat{\Phi}_i(x/L)$, and $g_i\in C^{\infty}_{\mathrm{c}}((-2L,2L)^{2n})$, $g_i(x):=L\widehat{g}_i(x/L)$. It remains to notice that $\|Dg_i\|_{\infty}=\|D\widehat{g}_i\|_{\infty}$ and $L^2\|D^3 f\|_\infty=\|D^3\widehat{f}\|_\infty$.

Now we concentrate on the case of $ L = 1 $, and let $f: (-2,2)^{2n} \rightarrow \mathbb{R} $ be a smooth function supported in $ (-1,1)^{2n} $ such that $\int_{(-1,1)^{2n}}f\omega^{n}=0$. Apply Lemma \ref{PoincareLemma} to get a collection $f_1,\ldots,f_{2n}\in C^{\infty}_{\mathrm{c}}((-1,1)^{2n})$ such that $f=\sum_{j=1}^{2n} f_j$ and
\begin{itemize}
    \item $(\forall\, 1\leq j\leq 2n)$ and any $(x_1,\ldots,x_{j-1},x_{j+1},\ldots,x_{2n})\in (-1,1)^{2n-1}$ we have
    \[\int_{-1}^1 f_j(x_1,\ldots,x_{2n})dx_j=0,\]
    \item $(\forall\, 1\leq j\leq 2n)\,\|D^3 f_j\|_{\infty}\leq C \|D^3 f\|_{\infty}$.
\end{itemize}
Using Lemma \ref{EmbeddingLemma} and Lemma \ref{rotation-Lemma} we are going to construct $\Phi_1,\ldots,\Phi_{2n}\in \Ham_{\mathrm{c}}((-2,2)^{2n})$ and $g_1,\ldots,g_{2n}\in C^{\infty}_{\mathrm{c}}((-2,2)^{2n})$ such that $f_i=\Phi_i^* g_i-g_i$. We only show the case $i=1$. 

Let $\Psi: (\mathbb{R}/2\pi\mathbb{Z})\times[-1,1]^{2n-1}\hookrightarrow (-2,2)^{2n}$ be a smooth embedding given by Lemma \ref{EmbeddingLemma}, and let $\Phi\in\Ham_{\mathrm{c}}((-2,2)^{2n})$ be a Hamiltonian diffeomorphism given by the lemma. Apply Lemma \ref{rotation-Lemma} to the function \[\widetilde{f}_1: (\mathbb{R}/2\pi\mathbb{Z})\times (-1,1)^{2n-1}\rightarrow\mathbb{R},\quad \widetilde{f_1}:=f_1\circ\Psi,\]
to get a compactly supported function $\widetilde{g}_1 : (\mathbb{R}/2\pi\mathbb{Z})\times(-1,1)^{2n-1} \rightarrow \mathbb{R} $ which satisfies
\begin{itemize}
    \item $\widetilde{f}_1(t,x)=\widetilde{g}_1(t+2\sqrt{2}\pi,x)-\widetilde{g}_1(t,x)$,
    \item $\|D\widetilde{g}_1\|_{\infty}\leq 2 \|D^3\widetilde{f}_1\|_{\infty}$.
\end{itemize}

\noindent Finally, denoting $ W := \Psi ((\mathbb{R}/2\pi\mathbb{Z})\times(-1,1)^{2n-1}) $, define $g_1\in C^{\infty}_{\mathrm{c}}(W)\subset C^{\infty}_{\mathrm{c}}((-2,2)^{2n})$ via the equality $g_1\circ\Psi=\widetilde{g}_1$. Therefore we have 
$$ f_1\circ\Psi (t,x) = \widetilde{f}_1(t,x) = \widetilde{g}_1(t+2\sqrt{2}\pi,x)-\widetilde{g}_1(t,x) = g_1\circ\Phi \circ\Psi (t,x) - g_1\circ\Psi (t,x).$$
Since $f_1$ and $g_1$ are both supported inside $W$ we conclude $f_1=\Phi^*g_1-g_1$. Moreover, 
\begin{equation*}
\begin{gathered}
\|Dg_1\|_{\infty}\leq \widetilde{C}_1 \|D\widetilde{g}_1\|_{\infty} \leq 2 \widetilde{C}_1 \|D^3 \widetilde{f}_1\|_{\infty} \\
\leq \widetilde{C}_2 (\|D f_1\|_{\infty} + \|D^2 f_1\|_{\infty} +\|D^3 f_1\|_{\infty}) \leq 3\widetilde{C}_2 \|D^3 f_1\|_{\infty} \leq  3 C \widetilde{C}_2 \|D^3f \|_\infty
\end{gathered}
\end{equation*} 
where the constants $ \widetilde{C}_1, \widetilde{C}_2 $ depend only on the map $ \Psi $.

\subsection{Proof of Proposition \ref{MainProp2}}

We first claim that by rescaling we can without loss of generality assume that $ L = \lambda = 1 $. Indeed, let $L>0$ and $\lambda\neq 0$, and assume that $h\in C^{\infty}_{\mathrm{c}}((-L,L)^{2n})$ satisfies $\|Dh\|_{\infty}\leq\frac{|\lambda|}{8}$. Define $\widetilde{h}\in C^{\infty}_{\mathrm{c}}((-1,1)^{2n}) $ by $ \widetilde{h}(x) = \frac{1}{L}h(Lx)$, and note that $\|D\widetilde{h}\|_{\infty}=\|Dh\|_{\infty}$. Apply the case $ L = \lambda = 1 $ of the proposition to the function $\frac{1}{\lambda}\cdot\widetilde{h}$ to get $\widetilde{\Phi}\in\Ham_{\mathrm{c}}((-2,2)^{2n})$ such that the equality $\widetilde{\Phi}^*\,x_1-x_1=\frac{1}{\lambda}\cdot\widetilde{h}$ holds on $(-1,1)^{2n}$. Equivalently, the equality $\widetilde{\Phi}^*\,(\lambda x_1)-(\lambda x_1)=\widetilde{h}$ holds on $(-1,1)^{2n}$. It remains to define $\Phi\in\Ham_{\mathrm{c}}((-2L,2L)^{2n})$, $\Phi(x):=L\widetilde{\Phi}(x/L)$.

Now we concentrate on the case of $ L = \lambda = 1 $, and let $h\in C^{\infty}_{\mathrm{c}}((-1,1)^{2n})$ such that $\|Dh\|_{\infty}\leq\frac{1}{8}$. 
For each $t\in[0,1]$ apply Lemma \ref{embeddingLemma} to the function $th$ to get a symplectomorphism $ \Psi_{th} $ of $ \mathbb{R}^{2n} $ satisfying:
\begin{itemize}
    \item $\Psi_{th}^*x_1-x_1=th$,
    \item $(\forall\,1\leq i\leq 2n)\,|\Psi_{th}^*x_i-x_i|\leq 2|1-x_2|\cdot t\|Dh\|_{\infty}$.
\end{itemize}
It is easy to see that the proof of Lemma \ref{embeddingLemma} guarantees that $t\mapsto\Psi_{th}$ ($t \in [0,1]$) is a smooth (possibly not compactly supported) Hamiltonian flow on $ \mathbb{R}^{2n} $. Let $H:\mathbb{R}^{2n} \times [0,1] \rightarrow\mathbb{R}$ be the (time dependent) Hamiltonian function generating the flow $t\mapsto\Psi_{th}$. Let $\chi\in C^{\infty}_{\mathrm{c}}((-2,2)^{2n})$ be the function which equals $1$ on $(-3/2,3/2)^{2n}$, and consider another Hamiltonian flow $t\mapsto\phi^t_{\chi H}$. Note that for any $x=(x_1,x_2,\ldots,x_{2n})\in (-1,1)^{2n}$ and $ t \in [0,1] $ we have
\[|\Psi_{th}^*x_i-x_i|\leq 2|1-x_2|\cdot\|Dh\|_{\infty}\leq 4\|Dh\|_{\infty}\leq\frac{1}{2},\]
which implies
\[ \quad\Psi_{th}((-1,1)^{2n})\subset (-3/2,3/2)^{2n}.\]
In particular $\Psi_{th}|_{(-1,1)^{2n}}=\phi^t_{\chi H}|_{(-1,1)^{2n}}$, therefore we can define $\Phi:=\phi^1_{\chi H}$.

\subsection{Proof of Theorem \ref{ThmMicro}}

We define $ C:= \frac{1}{8C_1} $ where the constant $ C_1 $ is given by Proposition \ref{MainProp1}. Now let $f : (-4\varepsilon,4\varepsilon)^{2n} \rightarrow \mathbb{R} $ 
be a smooth function supported in $ (-\varepsilon,\varepsilon)^{2n} $ such that $\|D^3f\|_{\infty}\leq\frac{C}{\varepsilon^3}$ and $\int_{(-\varepsilon,\varepsilon)^{2n}}f\omega^n=0$. Applying Proposition \ref{MainProp1} to $f$ we get a collection of smooth functions $g_1,\ldots,g_{2n}\in C^{\infty}_{\mathrm{c}}((-2\varepsilon,2\varepsilon)^{2n})$ such that $f=\sum_{i=1}^{2n}\Phi_i^* g_i-g_i$, for some Hamiltonian diffeomorphisms $\Phi_1,\ldots,\Phi_{2n}\in\Ham_{\mathrm{c}}((-2\varepsilon,2\varepsilon)^{2n})$, and such that moreover for each $1\leq i\leq 2n$ we have $\|Dg_i\|_{\infty}\leq  C_1\varepsilon^2\cdot\|D^3f\|_{\infty}$ and hence 
\[\|Dg_i\|_{\infty}\leq C_1\varepsilon^2\cdot\|D^3f\|_{\infty}\leq\frac{1}{8\varepsilon}.\]

This allows us to apply Proposition \ref{MainProp2} to the function $g_i$, $L=2\varepsilon$ and $\lambda=\frac{1}{\varepsilon}$. We get a Hamiltonian diffeomorphism $\Psi_i\in\Ham_{\mathrm{c}}((-4\varepsilon,4\varepsilon)^{2n})$ which satisfies $\Psi_i^*v-v=g_i$ on $(-2\varepsilon,2\varepsilon)^{2n}$. Since the support of $\Phi_i$ is inside $(-2\varepsilon,2\varepsilon)^{2n}$, the equality
\[\Phi_i^*g_i-g_i=\Phi_i^*(\Psi_i^*v-v)-(\Psi_i^*v-v)\]
holds everywhere. Finally, we get the required representation of $f$ by defining $\Phi_{2i-1,+}:=\Psi_i\circ\Phi_i$, $\Phi_{2i,+}:=\mathrm{Id}$, $\Phi_{2i-1,-}:=\Phi_i$ and $\Phi_{2i,-}:=\Psi_i$ for $1\leq i\leq 2n$.

\section{A local version of the main result} \label{Section:Local}

\begin{theorem}[Local statement]\label{ThmLocal}
    Let $ L > 0 $. There exists $N=N(n)\in\mathbb{N}$ such that for any smooth function $f : (-8L,8L)^{2n} \rightarrow \mathbb{R} $ supported in $ (-L,L)^{2n} $ and 
    which satisfies $\int_{(-L,L)^{2n}}f\omega^n=0$ and $\|f\|_{\infty}\leq L$, one can write
    \begin{equation} \label{eq:thmlocal}
     f=\sum_{i=1}^N\Phi_{i,+}^*x_1-\sum_{i=1}^N\Phi_{i,-}^*x_1,
    \end{equation}
    for some Hamiltonian diffeomorphisms $\Phi_{i,\pm}\in\Ham_{\mathrm{c}}((-8L,8L)^{2n})$.
\end{theorem}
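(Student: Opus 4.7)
The plan is to reduce the local statement to the microscale statement (Theorem \ref{ThmMicro}) by covering the support of $f$ with cubes of a uniform side-length depending only on $n$ and applying the microscale on each. By a rescaling argument of the type used at the start of the proofs of Propositions \ref{MainProp1} and \ref{MainProp2}, we reduce to $L = 1$. Fix $\varepsilon = \varepsilon(n) > 0$ depending only on $n$, tile $[-1,1]^{2n}$ by cubes $Q_\alpha$ of side $2\varepsilon$ centered at the lattice points of $2\varepsilon \mathbb{Z}^{2n} \cap [-1,1]^{2n}$, and pick a smooth partition of unity $\{\chi_\alpha\}$ subordinate to slight enlargements $\tilde Q_\alpha$ of the $Q_\alpha$. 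Color the indices into $m = m(n)$ classes $C_1, \ldots, C_m$ so that for any two indices $\alpha, \beta \in C_i$ of the same class, the $4\varepsilon$-neighborhoods $\alpha + (-4\varepsilon,4\varepsilon)^{2n}$ and $\beta + (-4\varepsilon,4\varepsilon)^{2n}$ (which will contain the supports of the microscale Hamiltonians) are disjoint; a standard lattice coloring works with $m$ bounded by a constant depending only on $n$.

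The technical heart is to construct an exact decomposition $f = \sum_{i=1}^m \sum_{\alpha \in C_i} f_{i,\alpha}$, with each $f_{i,\alpha}$ supported in $\alpha + (-\varepsilon,\varepsilon)^{2n}$, satisfying $\int f_{i,\alpha}\,\omega^n = 0$, and obeying the bound $\|D^3 f_{i,\alpha}\|_\infty \leq C(n)/\varepsilon^2$ required for the microscale statement (after rescaling by a factor $\varepsilon$ to convert $v = x_1/\varepsilon + b$ into $x_1$). The naive candidate $f \chi_\alpha$ satisfies only the support condition; $f$ itself has only $\|f\|_\infty \leq 1$ and no a priori bound on its derivatives. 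The construction combines a smoothing of $f$ at a scale comparable to $\varepsilon$ (which, since $\varepsilon$ is a dimension-dependent constant, yields the required $D^3$ bound on the cube pieces) with a redistribution of the local means: the nonzero masses $c_{i,\alpha} = \int f \chi_\alpha\,\omega^n$ are corrected by subtracting normalized bumps of matching mass, and these corrections are transported along a spanning tree of the cube grid so that they globally cancel, using the hypothesis $\int f\,\omega^n = 0$.

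With this decomposition established, apply Theorem \ref{ThmMicro} to each $f_{i,\alpha}/\varepsilon$ (translated to be centered at the origin) with $v = x_1/\varepsilon + b$, the constant $b$ absorbing the translation. The theorem produces Hamiltonian diffeomorphisms $\Psi_{k,\pm,i,\alpha} \in \Ham_{\mathrm{c}}(\alpha + (-4\varepsilon,4\varepsilon)^{2n})$ realizing $f_{i,\alpha}/\varepsilon = \sum_{k=1}^{4n}(\Psi_{k,+,i,\alpha}^* v - \Psi_{k,-,i,\alpha}^* v)$. Multiplying by $\varepsilon$ and using $\varepsilon v = x_1 + \varepsilon b$, together with the cancellation of the additive constant $\varepsilon b$ between the equal numbers of plus and minus terms, gives $f_{i,\alpha} = \sum_{k=1}^{4n}(\Psi_{k,+,i,\alpha}^* x_1 - \Psi_{k,-,i,\alpha}^* x_1)$. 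For fixed $(k, \pm, i)$, the supports of the $\Psi_{k,\pm,i,\alpha}$ are pairwise disjoint as $\alpha$ ranges over $C_i$, and so may be amalgamated into a single Hamiltonian diffeomorphism $\Phi_{k,\pm,i} \in \Ham_{\mathrm{c}}((-8,8)^{2n})$ by declaring it to equal $\Psi_{k,\pm,i,\alpha}$ on each piece and the identity elsewhere. Summing the identities over $i,\alpha$ produces the desired representation of $f$ with $N = 4n \cdot m(n)$ summands.

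The main obstacle is the decomposition step: producing pieces $f_{i,\alpha}$ that simultaneously sum exactly to $f$, have zero mean on each cube, and satisfy the required $D^3$ bound, all while keeping the number of color classes dimension-dependent. The tension is between the global mean-zero constraint, which requires transporting mass across cubes, and the local smoothness constraint, which requires a scale-adapted smoothing of $f$; the redundancy afforded by the $m(n)$ color classes is used to absorb the residual corrections from the smoothing and transport steps.
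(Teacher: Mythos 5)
Your overall architecture (partition into small cubes, color classes to disjointify supports, zero-mean corrections, then the microscale statement) matches the paper's, but two of your key steps fail as stated. First, you fix $\varepsilon=\varepsilon(n)$ and propose to obtain the $D^3$ bound by smoothing $f$ at scale $\varepsilon$. The remainder between $f$ and its smoothing is then an arbitrary sup-norm-$O(1)$ function with no derivative control, and nothing in your scheme can represent it; iterating the smoothing does not terminate. The correct move is the opposite one: let $\varepsilon$ depend on $f$ (this is allowed, since $N$ depends only on the number of color classes, which is independent of $\varepsilon$), and take the pieces to be $f\cdot\mathcal{R}^{\varepsilon}(\cdot-v)$ with no smoothing. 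Then $\|D^3(f\mathcal{R}^\varepsilon(\cdot-v))\|_\infty\le \|f\|_\infty\|D^3\mathcal{R}\|_\infty\varepsilon^{-3}+\mathcal{O}(\varepsilon^{-2})$, and the hypothesis of Theorem \ref{ThmMicro} is calibrated exactly at $C\varepsilon^{-3}$, so the cutoff's own contribution saturates the threshold while $f$'s derivatives are absorbed for $\varepsilon$ small. This also breaks your final bookkeeping: once $\varepsilon$ is small, the pieces satisfy $\|D^3\|\lesssim\varepsilon^{-3}$, not $\varepsilon^{-2}$, so you cannot apply the microscale statement to $f_{i,\alpha}/\varepsilon$ and "multiply back by $\varepsilon$." The paper instead keeps the steep linear function $v=\varepsilon^{-1}x_2+c$ and realizes it as $\Psi_\mu^*x_1-x_1$ for an explicit shear $\Psi_\mu$, so each microscale term becomes a difference of two pullbacks of $x_1$ (doubling $N$, harmlessly).

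Second, and more fundamentally, your mean-redistribution "along a spanning tree" cannot satisfy the $D^3$ bound. A correction bump of mass $A$ supported in a cube of side $\sim\varepsilon$ has $\|D^3\|\sim |A|\varepsilon^{-2n-3}$, so each edge of your transport plan may carry flux at most $O(\|f\|_\infty\varepsilon^{2n})$ — one cube's worth of mass. But for $f\approx\mathrm{sign}(x_1)$ the total mass $O(1)$ must cross the hyperplane $\{x_1=0\}$, which meets only $O(\varepsilon^{-(2n-1)})$ cubes, forcing some edge to carry flux $\gtrsim\varepsilon^{2n-1}$ and hence a correction with $\|D^3\|\gtrsim\varepsilon^{-4}$. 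No purely combinatorial transport on the adjacency graph fixes this. The paper's resolution (the Claim inside Lemma \ref{LemmaPartitionOfFunction}) is genuinely symplectic: arrange the cubes in a chain of adjacent cubes, choose a greedy permutation $\sigma$ so that every partial sum of the masses $s_{\sigma(i)}$ is bounded by a single term $O(\|f\|_\infty\varepsilon^{2n})$, and realize $\sigma$ by a Hamiltonian diffeomorphism $\Phi_\lambda$ that physically translates the cubes. The long-range mass transport is thereby done by $\Phi_\lambda$ for free, and the residual transport is only between consecutive cubes with per-edge flux equal to a partial sum, i.e.\ $O(\|f\|_\infty\varepsilon^{2n})$, which is exactly compatible with the $D^3$ bound. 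Without this (or an equivalent) idea, your decomposition step does not go through.
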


\subsection{Proof of Theorem \ref{ThmLocal}}

Without loss of generality we may assume that $ L = 1 $ and that $ f $ is a non-zero function.
Consider a smooth function $r:\mathbb{R} \rightarrow [0,1]$ such that
\[r(t)=
        \begin{cases}
            1,& \text{for } t\in[-1/3,1/3],\\
            0,& \text{for } t\in (-\infty,-2/3]\cup[2/3,\infty),
        \end{cases}\]
and such that $\sum_{i\in\mathbb{Z}}r(t+i)=1$. Note that this in particular means that $ \int_{-1}^1 r(t) \, dt = 1 $. Now we define 
$$ \mathcal{R}(x)=\mathcal{R}(x_1,\ldots,x_{2n})=\prod_{i=1}^{2n}r(x_i) ,$$
and for a sufficiently small $\varepsilon>0$ we put
\[\mathcal{R}^{\varepsilon}(x)=\mathcal{R}(x/\varepsilon) = \prod_{i=1}^{2n}r(x_i/\varepsilon).\]
Note that we have $\sum_{v\in\varepsilon\mathbb{Z}^{2n}}\mathcal{R}^{\varepsilon}(x-v)=1$ and 
\begin{equation} \label{eq:intR}
\int_{\mathbb{R}^{2n}}\mathcal{R}^{\varepsilon}\omega^{n}=\varepsilon^{2n}.
\end{equation}

\noindent For each $\lambda\in\mathcal{X}=\{0,1\}^{2n}$ consider the finite grid $\Gamma^{\varepsilon}_{\lambda}\subset (-1,1)^{2n}$ defined as:
\[\Gamma^{\varepsilon}_{\lambda}:=\left(\varepsilon\cdot\lambda+2\varepsilon\mathbb{Z}^{2n}\right)\cap (-1,1)^{2n}.\]

\begin{lemma}\label{Lemma1.12}
    Given $f$ as in Theorem \ref{ThmLocal}, there is an $\varepsilon>0$ with the following property. For each $v\in\bigcup_{\lambda\in\mathcal{X}}\Gamma^{\varepsilon}_{\lambda}$ there exists a smooth function $g_v$ supported in $v+[-2\varepsilon/3,2\varepsilon/3]^{2n}$, such that:
    \begin{itemize}
        \item $f=\sum_{\lambda\in\mathcal{X}}f_{\lambda}$, where $f_{\lambda}:=\sum_{v\in\Gamma^{\varepsilon}_{\lambda}}g_v$,
        \item $(\forall\,\lambda\in\mathcal{X})$ $\int_{\mathbb{R}^{2n}}f_{\lambda}\omega^{n}=0$,
        \item $(\exists\, C=C(n))(\forall\,v\in\bigcup_{\lambda\in\mathcal{X}}\Gamma^{\varepsilon}_{\lambda})\,\|D^3f_\lambda\|_{\infty}\leq\frac{C\|f\|_{\infty}}{\varepsilon^3}$, and $\|g_v\|_{\infty}\leq C\|f\|_{\infty}$.
    \end{itemize}
\end{lemma}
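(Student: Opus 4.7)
The strategy is to use the smooth partition of unity $\{\mathcal{R}^\varepsilon(\cdot-v)\}_{v\in\varepsilon\mathbb{Z}^{2n}}$ to split $f$ into cube-supported raw pieces, and then perform a mass-redistribution step to enforce the zero-mean condition on each subgrid. First I would choose $\varepsilon>0$ small enough that (i) $\mathrm{supp}(f)$ lies at distance greater than $\varepsilon$ from $\partial(-1,1)^{2n}$, and (ii) $\varepsilon^i\|D^if\|_\infty\leq\|f\|_\infty$ for $i=1,2,3$; both are available since $f$ is smooth with $\|f\|_\infty>0$ (the case $f\equiv 0$ being trivial). Define the raw pieces $\tilde g_v(x):=f(x)\mathcal{R}^\varepsilon(x-v)$ for $v\in\varepsilon\mathbb{Z}^{2n}\cap(-1,1)^{2n}$. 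They are supported in $v+[-2\varepsilon/3,2\varepsilon/3]^{2n}$ and sum to $f$, and combining Leibniz with $\|D^j\mathcal{R}^\varepsilon\|_\infty\leq C\varepsilon^{-j}$ and (ii) yields $\|\tilde g_v\|_\infty\leq\|f\|_\infty$ and $\|D^3\tilde g_v\|_\infty\leq C(n)\|f\|_\infty/\varepsilon^3$. Setting $a_\lambda:=\int\sum_{v\in\Gamma_\lambda^\varepsilon}\tilde g_v\,\omega^n$, one has $\sum_\lambda a_\lambda=0$ and $|a_\lambda|\leq\|f\|_\infty\cdot\mathrm{vol}((-1,1)^{2n})\leq C(n)\|f\|_\infty$, but individual $a_\lambda$ need not vanish.

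The main task is to modify $\tilde g_v\to g_v$, preserving supports and the identity $\sum_v g_v=f$, so that each subgrid sum becomes zero-mean. The key observation is that for any pair of adjacent lattice points $v$ and $v'=v+\varepsilon e_k$, which automatically lie in different subgrids $\Gamma_\lambda^\varepsilon$ and $\Gamma_{\lambda+e_k}^\varepsilon$, the two associated cubes overlap in a slab of volume $\sim\varepsilon^{2n}$; on this slab I would fix a smooth bump $\eta_{v,v'}$ with $\int\eta_{v,v'}=1$, $\|\eta_{v,v'}\|_\infty\lesssim\varepsilon^{-2n}$, and $\|D^3\eta_{v,v'}\|_\infty\lesssim\varepsilon^{-2n-3}$. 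Replacing $\tilde g_v\to\tilde g_v-c\eta_{v,v'}$ and $\tilde g_{v'}\to\tilde g_{v'}+c\eta_{v,v'}$ then transfers $c$ units of integral from subgrid $\lambda$ to subgrid $\lambda+e_k$ while preserving both $\sum_v\tilde g_v=f$ and the support constraints. I would then fix a spanning tree of the hypercube $\mathcal{X}=\{0,1\}^{2n}$ rooted at some $\lambda_0$ and process it from leaves toward $\lambda_0$: at each non-root $\lambda$, whose tree-parent $\lambda'$ differs in some coordinate $k$, spread the accumulated subtree mass $A_\lambda:=a_\lambda+\sum_{\mu\text{ child of }\lambda}A_\mu$ uniformly over all $N_\lambda\sim\varepsilon^{-2n}$ pairs $(v,v+\varepsilon e_k)$ with $v\in\Gamma_\lambda^\varepsilon$, by adding $-(A_\lambda/N_\lambda)\eta_{v,v+\varepsilon e_k}$ to $\tilde g_v$ and the opposite to $\tilde g_{v+\varepsilon e_k}$. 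After processing the whole tree, the remaining integral at the root is $\sum_\lambda a_\lambda=0$, so every $f_\lambda$ automatically becomes zero-mean.

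The \emph{main obstacle} is the quantitative control of the corrections. A naive single-cube transfer of mass $\sim\|f\|_\infty$ would blow up the $C^3$-norm of the correction by $\varepsilon^{-2n-3}$, which is $\varepsilon^{-2n}$ times too large. Spreading the transfer uniformly over all $N_\lambda\sim\varepsilon^{-2n}$ cubes brings the per-cube transfer mass down to $|A_\lambda|/N_\lambda\lesssim\|f\|_\infty\varepsilon^{2n}$, and combining with $\|D^3\eta_{v,v'}\|_\infty\lesssim\varepsilon^{-2n-3}$ recovers exactly the target bound $C\|f\|_\infty/\varepsilon^3$, while $\|\eta_{v,v'}\|_\infty\lesssim\varepsilon^{-2n}$ yields the desired $L^\infty$-bound $C\|f\|_\infty$. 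Since each cube $v\in\Gamma_\lambda^\varepsilon$ is touched by at most one transfer per tree edge incident to $\lambda$, and the tree-degree is bounded by the hypercube-degree $2n$, each $g_v$ receives at most $2n$ correction terms, and all constants depend only on $n$, as required by the lemma.
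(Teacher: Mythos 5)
Your proposal is correct and follows essentially the same route as the paper: decompose $f$ via the partition of unity $\mathcal{R}^\varepsilon(\cdot-v)$ into raw cube-supported pieces, then redistribute the excess integrals between the subgrids $\Gamma^\varepsilon_\lambda$ using unit-mass bumps supported in the overlaps of adjacent cubes, with each bump weighted by the transferred mass divided by the $\gtrsim\varepsilon^{-2n}$ available overlaps. The only substantive difference is in the bookkeeping: the paper first establishes the refined estimate $\int F_\lambda\,\omega^n=\mathcal{O}(\varepsilon)$ (by comparing $\mathcal{R}^\varepsilon(\cdot-v)$ with its average $2^{-2n}$ on each cube) and then redistributes between all pairs of subgrids, whereas you use only the trivial bound $|a_\lambda|\lesssim\|f\|_\infty$ together with a spanning tree of the hypercube $\{0,1\}^{2n}$; since $\|f\|_\infty\varepsilon^{2n}\cdot\varepsilon^{-2n-3}=\|f\|_\infty\varepsilon^{-3}$ is exactly the allowed size, your version shows that the $\mathcal{O}(\varepsilon)$ estimate is not actually needed for this lemma. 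Your remaining points --- at most $2n$ corrections per cube by the tree-degree bound, supports preserved because each bump lives in the intersection of the two cubes, and disjointness of supports within each $\Gamma^\varepsilon_\lambda$ converting the bound on $\|D^3g_v\|_\infty$ into the stated bound on $\|D^3f_\lambda\|_\infty$ --- correctly close the argument.
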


\begin{proof}
    
For each $v\in\bigcup_{\lambda\in\mathcal{X}}\Gamma^{\varepsilon}_{\lambda}$ define $G_v:=f(x)\cdot\mathcal{R}^{\varepsilon}(x-v)$ and let
\[F_{\lambda}=\sum_{v\in\Gamma^{\varepsilon}_{\lambda}}G_v.\]
Note that $\sum_{\lambda\in\mathcal{X}}F_{\lambda}=f$, and $\mathrm{supp}(G_v)\subset v+[-2\varepsilon/3,2\varepsilon/3]^{2n}$ for every $v\in\bigcup_{\lambda\in\mathcal{X}}\Gamma^{\varepsilon}_{\lambda}$. Moreover, we have
\begin{equation}\label{eqD^4G_v}
    \|D^3G_v\|_{\infty} \leq \frac{1}{\varepsilon^3}\|f\|_{\infty}\cdot\|D^3\mathcal{R}\|_{\infty}+\mathcal{O}\Big(\frac{1}{\varepsilon^2}\Big)\,\text{ and }\|G_v\|_{\infty}\leq \|f\|_{\infty}.
\end{equation}

Now let $ \lambda \in \mathcal{X} $. For $ v \in \Gamma^{\varepsilon}_{\lambda} $, denoting $ Q_v := v + [-\varepsilon,\varepsilon]^{2n} $, by $(\ref{eq:intR})$ we get
$$ \int_{Q_v} (\mathcal{R}^{\varepsilon}(x-v)-2^{-2n})\omega^n = 0 ,$$
and hence
$$ \int_{Q_v} f(x) \mathcal{R}^{\varepsilon}(x-v)\omega^n -  2^{-2n} \int_{Q_v} f(x) \omega^n =   \int_{Q_v} (f(x)-f(v)) (\mathcal{R}^{\varepsilon}(x-v) - 2^{-2n}) \omega^n  = \mathcal{O}(\varepsilon^{2n+1}) .$$
Therefore we have 
$$ \int_{\mathbb{R}^{2n}} F_\lambda \omega^n = \sum_{v \in \bigcup_{\lambda\in\mathcal{X}}\Gamma^{\varepsilon}_{\lambda}} \int_{Q_v} f(x) \mathcal{R}^{\varepsilon}(x-v)\omega^n = 2^{-2n} \left( \sum_{v \in \bigcup_{\lambda\in\mathcal{X}}\Gamma^{\varepsilon}_{\lambda}} \int_{Q_v} f(x) \omega^n \right) + \mathcal{O}(\varepsilon) = \mathcal{O}(\varepsilon), $$
or in short, we get 
\begin{equation} \label{eq:intF-lam}
\int_{\mathbb{R}^{2n}} F_\lambda \omega^n = \mathcal{O}(\varepsilon) 
\end{equation}
for each $ \lambda \in \mathcal{X} $.

Consider the finite graph $(\mathcal{V},\mathcal{E})$ defined as
\[\mathcal{V}=\bigcup_{\lambda\in\mathcal{X}}\Gamma^{\varepsilon}_{\lambda},\quad\mathcal{E}=\{\{v,w\}\mid v\neq w\text{ and }v-w\in[-\varepsilon,\varepsilon]^{2n}\}.\]
For each edge $\{u,v\}\in\mathcal{E}$ define the function
\[G_{u,v}(x):=\frac{1}{(\varepsilon/6)^{2n}}\mathcal{R}^{\frac{\varepsilon}{6}}\Big(x-\frac{u+v}{2}\Big),\]
and note that this function satisfies the following:
\begin{enumerate}\label{eqD^4G_u,v}
    \item $\int_{\mathbb{R}^{2n}}G_{u,v}\omega^n=1$,
    \item $\mathrm{supp}(G_{u,v})\subset (u+[-2\varepsilon/3,2\varepsilon/3]^{2n}) \cap (v+[-2\varepsilon/3,2\varepsilon/3]^{2n})$,
    \item $\|D^3G_{u,v}\|_{\infty} \leq \frac{\|D^3\mathcal{R}\|_{\infty}}{(\varepsilon/6)^{2n+3}}+\mathcal{O}\Big(\frac{1}{\varepsilon^{2n+2}}\Big)\,\text{ and }\|G_{u,v}\|_{\infty}=(6/\varepsilon)^{2n}$.
\end{enumerate}
Let $N_{\mu,\lambda}$ be the number of edges between vertices in $\Gamma_{\mu}$ and $\Gamma_{\lambda}$. Moreover, define $I_{\lambda}:=\int_{\mathbb{R}^{2n}} F_{\lambda}\omega^n$ and note that we have $\sum_{\lambda\in\mathcal{X}}I_{\lambda}=0$, and by $(\ref{eq:intF-lam})$ we have $ I_\lambda = \mathcal{O}(\varepsilon) $. Finally, we define
\[(\forall \lambda \in \mathcal{X} \; \forall u\in \Gamma^{\varepsilon}_{\lambda})\quad g_u:=G_u+\sum_{\mu\in\mathcal{X}}\sum_{\substack{\{u,v\}\in\mathcal{E}\\v\in\Gamma_{\mu}}}\frac{I_{\mu}-I_{\lambda}}{2^n N_{\mu,\lambda}}\cdot G_{u,v},\]
and then for every $ \lambda \in \mathcal{X} $ we put $f_{\lambda}=\sum_{v\in\Gamma^{\varepsilon}_{\lambda}}g_v$. By mutual cancellation of the terms $ \frac{I_{\mu}-I_{\lambda}}{2^n N_{\mu,\lambda}}\cdot G_{u,v} $ we obtain $\sum_{\lambda\in\mathcal{X}}f_{\lambda}=\sum_{\lambda\in\mathcal{X}}F_{\lambda}=f$, and moreover we get 
\[(\forall\lambda\in\mathcal{X})\quad\int_{\mathbb{R}^{2n}}f_{\lambda}\omega^n=I_{\lambda}+\sum_{\substack{\mu\in\mathcal{X}\\\mu\neq\lambda}}\sum_{\substack{\{u,v\}\in\mathcal{E}\\u\in\Gamma^{\varepsilon}_{\lambda},v\in\Gamma^{\varepsilon}_{\mu}}}\frac{I_{\mu}-I_{\lambda}}{2^n N_{\mu,\lambda}}=\]
\[=I_{\lambda}+\sum_{\substack{\mu\in\mathcal{X}\\\mu\neq\lambda}}\frac{I_{\mu}-I_{\lambda}}{2^n}=\frac{1}{2^n}I_{\lambda}+\frac{1}{2^n}\sum_{\substack{\mu\in\mathcal{X}\\\mu\neq\lambda}}I_{\mu}=0.\]
Also note that $N_{\mu,\lambda}>(1/\varepsilon)^{2n}$ and $ I_{\lambda} = \mathcal{O}(\varepsilon) $. Combining this with (\ref{eqD^4G_v}) and the properties of $G_{u,v}$ listed above, we get that for a small enough $\varepsilon$, 
\[\|g_u\|_{\infty}\leq 2 \|f\|_{\infty},\]
\[\|D^3 g_u\|_{\infty}\leq (1+\|D^3\mathcal{R}\|_{\infty}) \frac{\|f\|_{\infty}}{\varepsilon^3}.\]
Note that $f_{\lambda}=\sum_{v\in\Gamma^{\varepsilon}_{\lambda}}g_v$, and $\mathrm{supp}\,f_{\lambda}=\bigsqcup_{v\in\Gamma^{\varepsilon}_{\lambda}}\mathrm{supp}\,g_u$, therefore it is enough to take $C=2 + \|D^3\mathcal{R}\|_{\infty}$.

\end{proof}

\begin{lemma}\label{LemmaPartitionOfFunction}
    For each $\lambda\in\mathcal{X}$ there exist:
    \begin{enumerate}
        \item a Hamiltonian diffeomorphism $\Phi_{\lambda}\in\Ham_{\mathrm{c}}((-2,2)^{2n})$,
        \item a finite subset $\widetilde{\Gamma}^{\varepsilon}_{\lambda}\subset(\varepsilon\mathbb{Z}^{2n}\setminus\Gamma^{\varepsilon}_{\lambda})\cap (-1,1)^{2n} $,
        \item a collection of smooth functions $\{f_{\lambda,v}\mid v\in\Gamma^{\varepsilon}_{\lambda}\cup\widetilde{\Gamma}^{\varepsilon}_{\lambda}\}$,
    \end{enumerate}
    such that the following holds:
    \begin{enumerate}
        \item $\Phi_{\lambda}^*f_{\lambda}=\sum_{v\in\Gamma^{\varepsilon}_{\lambda}\cup\widetilde{\Gamma}^{\varepsilon}_{\lambda}} f_{\lambda,v}$,
        \item $(\forall\,v\in\Gamma^{\varepsilon}_{\lambda}\cup\widetilde{\Gamma}^{\varepsilon}_{\lambda})\,\,\, \mathrm{supp}(f_{\lambda,v})\subset v+[-2\varepsilon/3,2\varepsilon/3]^{2n}$,
        \item $(\forall\,v\in\Gamma^{\varepsilon}_{\lambda}\cup\widetilde{\Gamma}^{\varepsilon}_{\lambda})\, \int_{\mathbb{R}^{2n}}f_{\lambda,v} \omega^{n}=0$,
        \item $(\exists\, C=C(n))(\forall\,v\in\Gamma^{\varepsilon}_{\lambda}\cup\widetilde{\Gamma}^{\varepsilon}_{\lambda})\,\|D^3f_{\lambda,v}\|_{\infty}\leq\frac{C\|f\|_{\infty}}{\varepsilon^3}$.
    \end{enumerate}
\end{lemma}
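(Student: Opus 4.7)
The goal is to modify $f_\lambda$ by a Hamiltonian diffeomorphism so that it decomposes as a sum of zero-mean pieces localized in small cubes. The obstruction to just taking $\Phi_\lambda=\mathrm{Id}$ is that each $g_v$ has integral $c_v := \int g_v\,\omega^n$ which need not vanish (only $\sum_v c_v = 0$). My plan is to use $\Phi_\lambda$ to transport mass between adjacent $Q_{v_k}$ via intermediate auxiliary cubes so that each piece becomes zero-mean.

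\emph{Step 1 (combinatorial setup).} Enumerate $\Gamma^\varepsilon_\lambda = \{v_1,\dots,v_M\}$ along a boustrophedon grid-path so that consecutive vertices $v_k,v_{k+1}$ differ by $2\varepsilon$ in a single coordinate direction. Let $\tilde v_k := (v_k+v_{k+1})/2 \in (\varepsilon\mathbb{Z}^{2n}\setminus \Gamma^\varepsilon_\lambda)\cap(-1,1)^{2n}$ and take $\widetilde\Gamma^\varepsilon_\lambda := \{\tilde v_1,\dots,\tilde v_{M-1}\}$ (possibly augmented with further auxiliaries as in Step 2). Set the partial sums $S_k := c_{v_1}+\cdots+c_{v_k}$, so $S_0 = S_M = 0$.

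\emph{Step 2 (Hamiltonian construction).} For each $k$, I construct a Hamiltonian isotopy $\Psi_k$ supported in a neighborhood of the segment $[v_k,v_{k+1}]$ that executes two swaps through $Q_{\tilde v_k}$: one exchanges a sub-region $A_k^L\subset Q_{v_k}$ (chosen so that $\int_{A_k^L} g_{v_k}\,\omega^n = S_k$) with a sub-region of $Q_{\tilde v_k}$ disjoint from the other $Q_{v_j}$, and the other exchanges a sub-region $A_k^R\subset Q_{v_{k+1}}$ (chosen so that $\int_{A_k^R} g_{v_{k+1}}\,\omega^n = -S_k$) with another sub-region of $Q_{\tilde v_k}$. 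Existence of $A_k^L,A_k^R$ with the prescribed integrals follows from an intermediate-value argument, varying the size/shape of a candidate sub-region continuously. The $\Psi_k$'s are arranged with pairwise disjoint supports away from the swap regions and with natural $\varepsilon$-scaled derivatives $\|D^j\Psi_k\|_\infty = O(\varepsilon^{1-j})$ for $j=1,2,3$. Set $\Phi_\lambda := \Psi_{M-1}\circ\cdots\circ\Psi_1$.

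\emph{Step 3 (decomposition and verification).} Define
\[
f_{\lambda,v_k} := (\Phi_\lambda^*g_{v_k})\big|_{Q_{v_k}},\qquad f_{\lambda,\tilde v_k} := (\Phi_\lambda^*g_{v_k})\big|_{Q_{\tilde v_k}}+(\Phi_\lambda^*g_{v_{k+1}})\big|_{Q_{\tilde v_k}}.
\]
Disjointness of the swap-target regions from the $Q_{v_j}$'s makes each $f_{\lambda,v}$ smooth and supported in the correct cube (item 2). The balance equations $c_{v_k} - S_k + S_{k-1} = 0$ and $S_k + (-S_k) = 0$ yield item 3. The chain rule combined with the $\varepsilon$-scaling of $\Phi_\lambda$ and the bound $\|D^3 g_v\|_\infty \leq C\|f\|_\infty/\varepsilon^3$ from Lemma~\ref{Lemma1.12} gives item 4. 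Summing over $v$ recovers $\Phi_\lambda^*f_\lambda$ (item 1). The main obstacle is the transport construction when $|S_k|$ approaches $\|f\|_\infty$, exceeding a single cube's $L^1$-capacity $\|g_{v_k}\|_\infty\cdot|Q_{v_k}| = O(\|f\|_\infty\varepsilon^{2n})$; in that regime the transport must be distributed in parallel across a cross-section of the grid using a larger auxiliary set $\widetilde\Gamma^\varepsilon_\lambda$, in the spirit of a max-flow argument. Simultaneously achieving the prescribed mass transfers and the natural derivative scaling is the most delicate technical point.
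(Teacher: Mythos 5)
There is a genuine gap, and you have in fact put your finger on it yourself without closing it. With a fixed boustrophedon enumeration the partial sums $S_k=c_{v_1}+\cdots+c_{v_k}$ are only controlled by $\sum_j|c_{v_j}|$, and since there are $\Theta(\varepsilon^{-2n})$ cubes each with $|c_{v_j}|$ as large as $\Theta(\|f\|_\infty\varepsilon^{2n})$, the $S_k$ can be of order $\|f\|_\infty$, i.e.\ larger by a factor $\varepsilon^{-2n}$ than the $L^1$-capacity of a single $\varepsilon$-cube. In that regime no sub-region $A_k^L\subset Q_{v_k}$ with $\int_{A_k^L}g_{v_k}\,\omega^n=S_k$ exists, so the intermediate-value argument of Step~2 fails, and any attempt to store mass $S_k$ in a cube of volume $O(\varepsilon^{2n})$ forces sup norms of order $\|f\|_\infty\varepsilon^{-2n}$ and destroys the bound in item~4. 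The proposed remedy (a parallel ``max-flow'' transport through a larger auxiliary set) is exactly the hard part of your argument and is not carried out; as stated, the proof does not go through.

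The paper resolves this with a different and much simpler use of the Hamiltonian diffeomorphism. Instead of transporting mass, $\Phi_\lambda$ merely \emph{permutes the cubes rigidly}: one chooses greedily a permutation $\sigma$ of the $v_i$ so that every partial sum $a_k=\sum_{i\le k}\int g_{\sigma(i)}$ satisfies $|a_k|\le 2^{2n}C'\|f\|_\infty\varepsilon^{2n}$ (at each step append a summand whose sign opposes the running total; this is possible since the total is zero). After this rearrangement the mass bookkeeping is done not by area-preserving swaps but by adding and subtracting explicit rescaled bumps: $f_{\lambda,v_i}=\widetilde g_i+a_{i-1}F_i^--a_iF_i^+$ and $f_{\lambda,(v_i+v_{i+1})/2}=a_iF_i^+-a_iF_{i+1}^-$, which telescope. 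Since now $|a_i|=O(\|f\|_\infty\varepsilon^{2n})$ and $\|D^3F_i^\pm\|_\infty=O(\varepsilon^{-2n-3})$, item~4 follows immediately, and one never needs to construct symplectic swap maps with prescribed integrals and $\varepsilon$-scaled derivatives. If you want to salvage your approach, the rearrangement step is the missing idea; once the partial sums are small, even your transport scheme would work, but it becomes unnecessary.
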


\begin{proof}
    Arrange the elements of $\Gamma^{\varepsilon}_{\lambda}$ in a sequence $v_1,v_2,\ldots,v_{N}$, such that the cubes $v_i+[-\varepsilon,\varepsilon]^{2n}$ and $v_{i+1}+[-\varepsilon,\varepsilon]^{2n}$ share a common $(2n-1)$-dimensional face. Apply Lemma \ref{Lemma1.12} and for each $1\leq i\leq N$ denote $g_i:=g_{v_i}$. Let $C'$ be the constant from the lemma.

\begin{claim}
    There exists a permutation $\sigma:\{1,2,\ldots,N\}\rightarrow\{1,2,\ldots,N\}$ such that for each $1\leq k\leq N$ we have
    \[\Big|\sum_{i=1}^{k}\int_{\mathbb{R}^{2n}}g_{\sigma(i)}\omega^n\Big|\leq 2^{2n} C'\cdot\|f\|_{\infty}\varepsilon^{2n}.\]
    Moreover, there exists a Hamiltonian diffeomorphism $\Phi_{\lambda}\in\Ham_{\mathrm{c}}((-2,2)^{2n})$ such that
    \[(\forall\,1\leq i\leq N)(\forall\, x\in[-2\varepsilon/3,2\varepsilon/3]^{2n})\quad\Phi_{\lambda}(v_i+x)=v_{\sigma(i)}+x.\]
\end{claim}

\begin{proof}
    For each $ 1 \leq i \leq N $ let $s_i:=\int_{\mathbb{R}^{2n}}g_i\omega^n$, and note that $ |s_i| \leqslant 2^{2n} C'\cdot\|f\|_{\infty}\varepsilon^{2n} $. We will inductively construct $\sigma$. Start by defining $\sigma(1):=1$, and suppose we have constructed $\sigma(1),\ldots,\sigma(k)$. If $\sum_{i=1}^k s_{\sigma(i)}\geq 0$, we find an index $j\in\{1,\ldots,N\}\setminus\{\sigma(1),\ldots,\sigma(k)\}$ for which $s_j\leq 0$ and define $\sigma(k+1):=j$. Otherwise, we find an index $j$ in the same set for which $s_j\geq 0$ and again define $\sigma(k+1):=j$. In both cases such an index exists because $\sum_{i=1}^N s_i=0$.
    
    To construct the Hamiltonian diffeomorphism $\Phi_{\lambda}$, it is enough to show that we can do the transposition of two consecutive cubes $v_i+[-2\varepsilon/3,2\varepsilon/3]^{2n} $ and $v_{i+1}+[-2\varepsilon/3,2\varepsilon/3]^{2n} $ by a Hamiltonian diffeomorphism (then any permutation can be obtained as a product of such transpositions). To do that, we can simply find a Hamiltonian isotopy of cubes (which consists only of translations) which exchanges the two cubes, and then extend it to an ambient Hamiltonian isotopy. We leave details to the reader.
\end{proof}

\noindent Define $\widetilde{\Gamma}^{\varepsilon}_{\lambda}:=\{\frac{1}{2}v_i+\frac{1}{2}v_{i+1}\mid 1\leq i\leq N-1\}$, and for each $1\leq i\leq N-1$ let
\[v^-_{i+1}:=\frac{3}{4}v_{i+1}+\frac{1}{4}v_{i},\quad v^+_i:=\frac{3}{4}v_{i}+\frac{1}{4}v_{i+1},\]
\[F^{\pm}_{i}(x)=\frac{\mathcal{R}^{\varepsilon/6}(x-v^{\pm}_{i})}{(\varepsilon/6)^{2n}}.\]
For $ 1 \leq i \leq N $ denote $\widetilde{g}_i=g_{\sigma(i)}\circ\Phi_{\lambda}$, and define $a_i:=\sum_{j=1}^i\int_{\mathbb{R}^{2n}}\widetilde{g}_j\omega^n$. Furthermore, put $ a_0 =0 $, and also note that $ a_N = 0 $. Finally define
\[(\forall\, 1\leq i\leq N)\quad f_{\lambda,v_i}(x):=\widetilde{g}_{i}(x)+a_{i-1}\cdot F^{-}_{i}-a_i\cdot F^{+}_i,\]
\[(\forall\, 1\leq i\leq N-1)\quad f_{\lambda, \frac{v_i+v_{i+1}}{2}}(x):=a_i\cdot F^{+}_{i}-a_i\cdot F^{-}_{i+1}.\]
Note that $\sum_{v\in\Gamma^{\varepsilon}_{\lambda}\cup\widetilde{\Gamma}^{\varepsilon}_{\lambda}} f_{\lambda,v}=\sum_{i=1}^N \widetilde{g}_i=\Phi_{\lambda}^*f_{\lambda}$, and moreover
\[\int_{\mathbb{R}^{2n}}f_{\lambda, v_i}\omega^n= \int_{\mathbb{R}^{2n}} \widetilde{g}_i\omega^n+a_{i-1}-a_i=0,\quad \int_{\mathbb{R}^{2n}}f_{\lambda, \frac{v_i+v_{i+1}}{2}}\omega^n=a_i-a_i=0.\]
We also have $\|D^3 F^{\pm}_i\|_{\infty}=\frac{6^{2n+3}}{\varepsilon^{2n+3}}\|D^3\mathcal{R}\|_{\infty}+\mathcal{O}(\frac{1}{\varepsilon^{2n+2}})$, therefore it is enough to define $C:=C'\cdot(2+2^{2n} \cdot 6^{2n+3}\|D^3\mathcal{R}\|_{\infty})$ to get
\[(\forall\,v\in\Gamma^{\varepsilon}_{\lambda}\cup\widetilde{\Gamma}^{\varepsilon}_{\lambda})\quad\|D^3f_{\lambda,v}\|_{\infty}\leq\frac{C\|f\|_{\infty}}{\varepsilon^3}.\]
Finally, it is clear that for each $v\in\Gamma^{\varepsilon}_{\lambda}\cup\widetilde{\Gamma}^{\varepsilon}_{\lambda}$ we have $ \mathrm{supp}(f_{\lambda,v})\subset v+[-2\varepsilon/3,2\varepsilon/3]^{2n}$.
\end{proof}

For each $\lambda\in\mathcal{X}$ we will divide the family of functions $\{f_{\lambda,v}\mid v\in\Gamma^{\varepsilon}_{\lambda}\cup\widetilde{\Gamma}^{\varepsilon}_{\lambda}\}$ into a finite number of subsets (which does not depend on $\varepsilon$) such that the following holds: if $f_{\lambda,u}$ and $f_{\lambda,v}$ are in the same subset, then $u+(-4\varepsilon,4\varepsilon)^{2n}$ and $v+(-4\varepsilon,4\varepsilon)^{2n}$ are disjoint. This allows us to apply Theorem \ref{ThmMicro} (the microscale statement) to the sum of the functions in the same subset.\\

\noindent For each $\mu\in\mathcal{Y}:=\{0,1,\ldots,9\}^{2n}$ define the finite grid $\mathcal{G}^{\varepsilon}_{\lambda,\mu}\subset (-1,1)^{2n}$ as
\[\mathcal{G}^{\varepsilon}_{\lambda,\mu}:=(\varepsilon\mu+10\varepsilon\mathbb{Z}^{2n})\cap(\Gamma^{\varepsilon}_{\lambda}\cup\widetilde{\Gamma}^{\varepsilon}_{\lambda}).\]

\begin{claim}
    For each $\mu\in\mathcal{Y}$, there exists $\Psi_{\mu}\in\Ham_{\mathrm{c}}((-8,8)^{2n})$ such that 
    \[(\forall v\in\mathcal{G}^{\varepsilon}_{\mu})(\exists c)\quad\Psi_{\mu}^*x_1-x_1=\frac{1}{\varepsilon}\cdot x_2+c\text{ holds on }v+(-4\varepsilon,4\varepsilon)^{2n}.\]
\end{claim}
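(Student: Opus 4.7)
The plan is to define $\Psi_\mu$ as the time-$1$ map of the Hamiltonian flow generated by a function of the form $H(x) := \chi(x)\,h(x_2)$, where $h : \mathbb{R} \to \mathbb{R}$ is a smooth function carefully tailored to the grid $\mathcal{G}^\varepsilon_\mu$, and $\chi \in C^\infty_{\mathrm{c}}((-8,8)^{2n})$ is a cutoff equal to $1$ on a large enough box. The point is that wherever $\chi \equiv 1$, the Hamiltonian vector field reduces to the pure shear $X_H = h'(x_2)\,\partial/\partial x_1$, whose time-$t$ flow sends $(x_1,x_2,\ldots,x_{2n})$ to $(x_1 + t\,h'(x_2),x_2,\ldots,x_{2n})$. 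So if $h'$ satisfies $h'(x_2) = x_2/\varepsilon + c_v$ on the $x_2$-slab containing $v + (-4\varepsilon,4\varepsilon)^{2n}$, reading off the time-$1$ flow gives $\Psi_\mu^*x_1 - x_1 = x_2/\varepsilon + c_v$ on each cube, which is exactly the required identity.

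To build $h$, collect the second coordinates $I := \{v_2 : v \in \mathcal{G}^\varepsilon_\mu\} \subset \varepsilon\mu_2 + 10\varepsilon\mathbb{Z}$. The elements of $I$ are $10\varepsilon$-separated, so the intervals $J_a := [a-4\varepsilon,\,a+4\varepsilon]$ for $a \in I$ are pairwise disjoint with gaps $\geq 2\varepsilon$. Fix a smooth bump $\eta : \mathbb{R} \to [0,1]$ with $\eta \equiv 1$ on $[-4,4]$ and $\mathrm{supp}(\eta) \subset (-5,5)$, and define
\[h'(x_2) := \sum_{a \in I} \eta\!\left(\tfrac{x_2-a}{\varepsilon}\right) \cdot \tfrac{x_2-a}{\varepsilon}.\]
The supports of the summands lie in the disjoint intervals $(a-5\varepsilon,a+5\varepsilon)$, so $h'$ is smooth, compactly supported, bounded by $5$ in absolute value, and coincides with $(x_2-a)/\varepsilon$ on each $J_a$. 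Let $h(x_2) := \int_0^{x_2} h'(s)\,ds$ (a smooth primitive, bounded on compact sets), pick $\chi \in C^\infty_{\mathrm{c}}((-8,8)^{2n})$ equal to $1$ on an open neighborhood of $[-7,7] \times [-2,2]^{2n-1}$, and set $H := \chi(x)\,h(x_2)$, $\Psi_\mu := \phi^1_H \in \Ham_{\mathrm{c}}((-8,8)^{2n})$.

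To verify the conclusion, fix $v \in \mathcal{G}^\varepsilon_\mu$ and $x \in v + (-4\varepsilon,4\varepsilon)^{2n}$. For $\varepsilon$ small enough we have $|x_i| < 2$ for every $i$, and $|h'(x_2)| \leq 5$, so the uncut shear curve $t \mapsto (x_1 + t\,h'(x_2), x_2, x_3, \ldots, x_{2n})$ stays inside $(-7,7)\times(-2,2)^{2n-1}$ for every $t \in [0,1]$. On this open set $\chi \equiv 1$, hence $d\chi = 0$ and $X_{\chi h} = X_h$ there, so by uniqueness of ODE solutions the above curve coincides with $\phi^t_H(x)$ on $[0,1]$. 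Evaluating at $t=1$ gives
\[\Psi_\mu^*x_1(x) - x_1 = h'(x_2) = \frac{x_2 - v_2}{\varepsilon} = \frac{x_2}{\varepsilon} + c,\]
with $c := -v_2/\varepsilon$, as required. The main obstacle is precisely this trajectory-containment step: the naive shear Hamiltonian $x_2^2/(2\varepsilon)$ would produce a displacement of order $1/\varepsilon$ in the $x_1$-direction and could not be cut off compactly without distortion. The remedy is to subtract $v_2/\varepsilon$ slab by slab (so $h'$ is built from the shifted models $(x_2-a)/\varepsilon$ rather than from $x_2/\varepsilon$), bounding the displacement by a constant independent of $\varepsilon$ and making the cutoff harmless.
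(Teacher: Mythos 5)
Your proposal is correct and follows essentially the same route as the paper: a shear generated by a Hamiltonian depending only on $x_2$, with the displacement chosen to equal $\tfrac{1}{\varepsilon}x_2 + c$ slab by slab so that it stays uniformly bounded (by $5$) independently of $\varepsilon$, then cut off by $\chi$ supported in $(-8,8)^{2n}$ and equal to $1$ on a large box so that the relevant trajectories never see the cutoff. The only difference is notational (the paper calls the displacement $h$ and takes the Hamiltonian to be its primitive, while you call the Hamiltonian $h$ and the displacement $h'$), plus your more explicit bump-function construction and trajectory-containment check, which the paper leaves implicit.
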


\begin{proof}
    Note that for any function $h\in C^{\infty}(\mathbb{R})$ the diffeomorphism
    \[\varphi_{h}(x_1,x_2,\ldots,x_{2n}):= (x_1+h(x_2),x_2,\ldots,x_{2n})\]
    is Hamiltonian and satisfies $\varphi_{h}^*x_1-x_1=h(x_2)$. Writing $\mu=(\mu_1,\mu_2,\ldots,\mu_{2n})$, choose $h:\mathbb{R}\rightarrow (-5,5) $ to be a smooth function that satisfies
   \[h'(t)=\frac{1}{\varepsilon} \,\, \text{ for } \,\, t\in \varepsilon \mu_2+10\varepsilon\mathbb{Z}+[-4\varepsilon,4\varepsilon].\]
    Let $H$ be the Hamiltonian function which generates the Hamiltonian isotopy $t\mapsto\varphi_{th}$. Let $\chi\in C^{\infty}_{\mathrm{c}}((-8,8)^{2n})$ be a cut-off function which equals $1$ on $ [-7,7]^{2n}$. Then we can define $\Psi_{\mu}$ as the time-1 map of the Hamiltonian isotopy generated by the Hamiltonian function $\chi\cdot H$.
\end{proof}

\noindent Let $\widetilde{C}$ be the constant from the statement of Theorem \ref{ThmMicro}, and let $m=m(n)\in\mathbb{N}$ be a natural number for which $\frac{C}{m}\leq\widetilde{C}$. 
Note that for each $v\in\Gamma^{\varepsilon}_{\lambda}\cup\widetilde{\Gamma}^{\varepsilon}_{\lambda}$, the function $f_{\lambda,v}$ is supported in $v+(-\varepsilon, \varepsilon)^{2n}$, therefore we can apply Theorem \ref{ThmMicro} to the function $\frac{1}{m} f_{\lambda,v}$ to get Hamiltonian diffeomorphisms $\{\Phi^{\lambda,v}_{i,\pm}\}_{i=1}^{4n}$ supported in $v+(-4\varepsilon, 4\varepsilon)^{2n}$ such that
\[ \frac{1}{m} f_{\lambda,v}=\sum_{i=1}^{4n}(\Phi^{\lambda,v}_{i,+})^*\Big(\Psi_{\mu}^*x_1-x_1\Big)-(\Phi^{\lambda,v}_{i,-})^*\Big(\Psi_{\mu}^*x_1-x_1\Big).\]
Moreover, for each $ \mu \in \mathcal{Y} $, for any two distinct elements $u,v\in\mathcal{G}^{\varepsilon}_{\lambda,\mu}$ the supports of $\Phi^{\lambda,u}_{i,\pm}$ and $\Phi^{\lambda,v}_{j,\pm}$ are disjoint, which implies that if for every $ \mu \in \mathcal{Y} $ and $ 1 \leqslant i \leqslant 4n $ we denote by $\{\Psi^{\lambda,\mu}_{i,+}\} $ the composition of all $\Phi^{\lambda,u}_{i,+}$ (when $ u \in \mathcal{G}^{\varepsilon}_{\lambda,\mu}$), and by $\{\Psi^{\lambda,\mu}_{i,-}\} $ the composition of all $\Phi^{\lambda,u}_{i,-}$ (when $ u \in \mathcal{G}^{\varepsilon}_{\lambda,\mu}$), then we get
\[\frac{1}{m} \Phi^*_{\lambda}f_{\lambda}= \frac{1}{m} \sum_{v\in\Gamma^{\varepsilon}_{\lambda}\cup\widetilde{\Gamma}^{\varepsilon}_{\lambda}} f_{\lambda,v} = 
\sum_{\mu\in\mathcal{Y}}\sum_{i=1}^{4n} (\Psi^{\lambda,\mu}_{i,+})^*\Big(\Psi_{\mu}^*x_1-x_1\Big)-(\Psi^{\lambda,\mu}_{i,-})^*\Big(\Psi_{\mu}^*x_1-x_1\Big).\]
Finally, this means that 
$$ f=\sum_{\lambda\in\mathcal{X}} f_{\lambda} =  m \sum_{\lambda\in\mathcal{X}} \sum_{\mu\in\mathcal{Y}}\sum_{i=1}^{4n}  (\Phi^{-1}_{\lambda})^* (\Psi^{\lambda,\mu}_{i,+})^*\Big(\Psi_{\mu}^*x_1-x_1\Big) - (\Phi^{-1}_{\lambda})^* (\Psi^{\lambda,\mu}_{i,-})^*\Big(\Psi_{\mu}^*x_1-x_1\Big), $$
which gives us a representation $(\ref{eq:thmlocal})$ for $ f $, with $ N = 8nm  |\mathcal{X}|  |\mathcal{Y}| = 2^{2n+3} 10^{2n} nm. $

\section{Proof of Theorem \ref{ThmMain}} \label{sec:proofThmMain}

Proposition \ref{MainProp2} from Section \ref{Section:Microscale} admits the following corollary:
\begin{cor} \label{cor:coord-change}
Let $ U \subset \mathbb{R}^{2n} $ be an open set, let $ f : U \rightarrow \mathbb{R} $ be a smooth function, and let $ p \in U $ such that $ df(p) = dx_1 $.
Then for any neigbourhood $ p \in U' \subset U $ there exists a Hamiltonian diffeomorphism $ \Theta $ compactly supported in $ U' $ such that
$ \Theta (p) = p $ and such that $ f \circ \Theta(x) = x_1 + c $ on a neighbourhood of $ p $, where $ c \in \mathbb{R} $ is a constant.
\end{cor}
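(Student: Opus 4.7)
Setting $c := f(p) - p_1$ and $h := f - x_1 - c$, we have $h(p) = 0$ and $dh(p) = df(p) - dx_1 = 0$. The conclusion $f \circ \Theta = x_1 + c$ near $p$ is equivalent, upon setting $\Phi := \Theta^{-1}$, to finding $\Phi \in \Ham_{\mathrm{c}}(U')$ with $\Phi(p) = p$ and $\Phi^* x_1 - x_1 = h$ on a neighborhood of $p$. My plan is first to apply Proposition \ref{MainProp2} to a cutoff of $h$, obtaining a Hamiltonian diffeomorphism $\Phi_0$ realizing this pullback equation near $p$, and then to compose with a small corrective translation $\Phi_1$ that restores the fixed point without spoiling the pullback identity.

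After translating coordinates so that $p$ is the origin, pick a small cube $Q := (-L,L)^{2n} \subset U'$ and a cutoff $\chi \in C^{\infty}_{\mathrm{c}}(Q)$ equal to $1$ on $Q' := (-L/2,L/2)^{2n}$, and set $\tilde h := \chi h$. Because $h$ vanishes to first order at the origin, for small $L$ we have $\|D \tilde h\|_\infty \leq 1/8$. Proposition \ref{MainProp2} with $\lambda = 1$ then yields $\Phi_0 \in \Ham_{\mathrm{c}}((-2L,2L)^{2n}) \subset \Ham_{\mathrm{c}}(U')$ with $\Phi_0^* x_1 - x_1 = \tilde h$ on $Q$, and in particular $\Phi_0^* x_1 - x_1 = h$ on $Q'$. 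Evaluating at the origin gives $x_1(\Phi_0(0)) = \tilde h(0) = 0$, so the point $q := \Phi_0(0)$ satisfies $q_1 = 0$; moreover the quantitative estimate in property~2 of Lemma \ref{embeddingLemma} (rescaled) shows that $q$ can be made arbitrarily close to the origin by shrinking $L$.

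To correct the fixed point, I choose a linear Hamiltonian $K$ on $\mathbb{R}^{2n}$ whose time-$1$ Hamiltonian flow is the translation $x \mapsto x - q$; since $q_1 = 0$, such a $K$ may be taken independent of $x_2$, whence $\{K, x_1\} \equiv 0$. Let $W \subset U'$ be a small cube containing the line segment from $q$ to the origin, and let $\chi' \in C^{\infty}_{\mathrm{c}}(U')$ equal $1$ on $W$. The time-$1$ map $\Phi_1 := \phi^1_{\chi' K}$ lies in $\Ham_{\mathrm{c}}(U')$, coincides with the translation $x \mapsto x - q$ on a neighborhood of $q$, and therefore satisfies $\Phi_1(q) = 0$ together with $\Phi_1^* x_1 = x_1$ near $q$. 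Setting $\Phi := \Phi_1 \circ \Phi_0$ gives $\Phi(0) = 0$, and for any neighborhood $N \ni 0$ small enough that $\Phi_0(N) \subset W$ we obtain $\Phi^* x_1 = \Phi_0^*(\Phi_1^* x_1) = \Phi_0^* x_1 = x_1 + h$ on $N$. Taking $\Theta := \Phi^{-1}$ and undoing the coordinate translation completes the proof (the constant $c$ is only shifted additively by the translation).

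The main subtlety is arranging the fixed-point condition $\Theta(p) = p$, which a direct application of Proposition \ref{MainProp2} does not deliver. The remedy exploits that $\Phi_0$ automatically preserves the first coordinate at $p$ (forced by $h(p) = 0$), so that the required correction lies in the $2n-1$ directions transverse to $\partial_{x_1}$ and can be implemented by the time-$1$ flow of a Hamiltonian Poisson-commuting with $x_1$; this flow, by design, does not disturb the pullback identity already established by $\Phi_0$.
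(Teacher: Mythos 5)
Your proposal is correct and follows essentially the same route as the paper's proof: cut off $f - x_1$ (up to a constant) near $p$, apply Proposition \ref{MainProp2} with $\lambda = 1$ to realize it as $\Phi^*x_1 - x_1$, and then compose with a compactly supported translation to restore the fixed point, using that the first coordinate of $\Phi(p)$ is unchanged so the correction does not disturb the pullback identity. The paper's version is just terser about the corrective translation.
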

\begin{proof}
Without loss of generality we may assume that $ p = 0 $ and $ f(p) = f(0) = 0 $. Pick a smooth function $ \rho : \mathbb{R}^{2n} \rightarrow \mathbb{R} $ supported in $ (-2,2)^{2n} $, such that $ \rho = 1 $ on $ [-1,1]^{2n} $. 
For $ \delta > 0 $ define $ h_\delta(x) = \rho(x/\delta)(f(x) - x_1) $. If we choose $ \delta $ to be small enough, we get $ (-4\delta,4\delta)^{2n} \subset U' \subset U $ and $ \| Dh_\delta \|_\infty < 1/8 $, and then by applying Proposition \ref{MainProp2} with $ L = 2\delta $ and $ \lambda = 1 $ we get a Hamiltonian diffeomorphism $\Phi\in\Ham_{\mathrm{c}}((-4\delta,4\delta)^{2n})$ such that $ \Phi^* x_1-x_1 = h_\delta $ on $ (-2\delta,2\delta)^{2n} $, in particular $  \Phi^* x_1 = x_1 + h_\delta = f $ on $ (-\delta,\delta)^{2n} $. Finally, if $ \Phi' \in \Ham_{\mathrm{c}}((-4\delta,4\delta)^{2n}) $ is a Hamiltonian diffeomorphism such that $ \Phi'(x) = x - \Phi(0) $ on a neighbourhood of $ \Phi(0) $, then for $ \Theta := (\Phi' \circ \Phi)^{-1} $ we have $ \Theta (0) = 0 $ and $ f \circ \Theta  = x_1 $ on a neighbourhood of $ 0 $.
\end{proof}

The next two lemmas will help us to reduce Theorem \ref{ThmMain} to the local statement (Theorem \ref{ThmLocal}).

\begin{lemma}\label{extended-Darboux_nbhd-Lemma}
Let $(M,\omega)$ be closed and connected symplectic manifold, and let $u\in C_0^{\infty}(M)$ be a non-zero smooth function. There exists a Darboux chart $ (V,\varphi) $ and a Hamiltonian diffeomorphism $\Psi \in \Ham(M,\omega)$ such that $ 0 \in \varphi(V) $ and such that we have 
$$ u \circ \varphi^{-1} (x_1,\ldots,x_{2n}) = x_1 + c $$ and 
$$ u\circ \Psi \circ \varphi^{-1} (x_1,\ldots,x_{2n}) = -x_1 - c $$
on $ \varphi(V) \subset\mathbb{R}^{2n}$, where $ c \in \mathbb{R} $ is some constant. 
\end{lemma}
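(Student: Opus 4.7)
The plan is to choose two points $p_+, p_- \in M$ at which $u$ takes opposite regular values, build Darboux charts near each in which $u$ becomes linear in $x_1$ with the appropriate sign, and then use a single Hamiltonian diffeomorphism of $M$ to identify the two charts.

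Since $u$ is non-zero with zero mean on the connected manifold $M$, its image is an interval containing both positive and negative values. By Sard's theorem, I may choose $c > 0$ such that both $c$ and $-c$ are regular values of $u$, and then pick points $p_\pm \in M$ with $u(p_\pm) = \pm c$ and $du(p_\pm) \neq 0$. Around $p_+$, I start with any Darboux chart centered at $p_+$; composing with a suitable element of $\mathrm{Sp}(2n,\mathbb{R})$ (which acts transitively on nonzero covectors of $\mathbb{R}^{2n}$), I may assume the chart $\varphi_0$ satisfies $d(u\circ\varphi_0^{-1})(0) = dx_1$. Corollary \ref{cor:coord-change} then supplies a Hamiltonian diffeomorphism $\Theta$ compactly supported near $0$ with $\Theta(0)=0$ and $u\circ\varphi_0^{-1}\circ\Theta(x) = x_1+c$ on a neighborhood of $0$. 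Setting $\varphi_+ := \Theta^{-1}\circ\varphi_0$ yields a Darboux chart $(V_+,\varphi_+)$ with $u\circ\varphi_+^{-1}(x) = x_1 + c$ on $\varphi_+(V_+)$. An entirely analogous construction at $p_-$, normalizing instead so that $d(u\circ\varphi_0^{-1})(0) = -dx_1$ and applying the corollary to $-u\circ\varphi_0^{-1}$, produces a chart $(V_-,\varphi_-)$ with $u\circ\varphi_-^{-1}(x) = -x_1 - c$.

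It remains to build $\Psi\in\mathrm{Ham}(M,\omega)$ such that $\Psi\circ\varphi_+^{-1} = \varphi_-^{-1}$ on a sufficiently small neighborhood of $0$; setting $\varphi:=\varphi_+$ and shrinking $V:=V_+$ accordingly then yields the lemma. Such $\Psi$ is built in three standard sub-steps. First, by connectedness of $M$, a time-dependent Hamiltonian supported in a tubular neighborhood of a smooth path from $p_+$ to $p_-$ produces $\Psi_1\in\mathrm{Ham}(M,\omega)$ with $\Psi_1(p_+)=p_-$. Second, by connectedness of $\mathrm{Sp}(2n,\mathbb{R})$, any symplectic linear automorphism of $T_{p_-}M$ is realized as the derivative at $p_-$ of some Hamiltonian diffeomorphism $\Psi_2$ fixing $p_-$ and compactly supported in a Darboux neighborhood of $p_-$ (by generating the required path of automorphisms via a time-dependent quadratic Hamiltonian in the chart, cut off by a bump function); choosing this linear part appropriately arranges that $\Psi_2\circ\Psi_1\circ\varphi_+^{-1}$ agrees with $\varphi_-^{-1}$ up to first order at $0$. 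Third, a Moser-type rescaling—applied to the symplectomorphism $\sigma := \varphi_-\circ\Psi_2\circ\Psi_1\circ\varphi_+^{-1}$ of a neighborhood of $0\in\mathbb{R}^{2n}$, which fixes $0$ with identity differential—joins $\mathrm{id}$ to $\sigma$ via $\sigma_t(x) := t^{-1}\sigma(tx)$ (extended smoothly to $\sigma_0 = \mathrm{id}$), each $\sigma_t$ being symplectic because $d\sigma_t(x) = d\sigma(tx)$ is symplectic. The generating time-dependent vector field is Hamiltonian on a star-shaped neighborhood of $0$, and cutting off its Hamiltonian produces a globally defined $\Psi_3\in\mathrm{Ham}(M,\omega)$ with $\Psi_3\circ\Psi_2\circ\Psi_1\circ\varphi_+^{-1} = \varphi_-^{-1}$ near $0$; one sets $\Psi := \Psi_3\circ\Psi_2\circ\Psi_1$.

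The main obstacle is the third sub-step: converting a first-order matching of two symplectic embeddings into exact agreement on a neighborhood by means of a Hamiltonian diffeomorphism supported in an arbitrarily small neighborhood of $p_-$. The rescaling and cutoff arguments are standard tools, but some care is needed to check that $\sigma_t$ extends smoothly through $t=0$, that the resulting time-dependent vector field is Hamiltonian (using that the Taylor expansion of $\sigma$ begins with the identity), and that the bump-function cutoff preserves the required equality $\Psi\circ\varphi_+^{-1}=\varphi_-^{-1}$ on a neighborhood of $0$.
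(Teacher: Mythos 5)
Your proposal is correct, but it takes a genuinely longer route than the paper in its second half. The paper works in a \emph{single} chart domain: it picks $p,q$ with $du(p),du(q)\neq 0$ and $u(q)=-u(p)$, takes any $\Psi$ with $\Psi(p)=q$ and one Darboux chart $\varphi$ at $p$, and then applies Corollary \ref{cor:coord-change} \emph{twice on the same domain} --- once to $u\circ\varphi^{-1}$ and once to $u\circ\Psi\circ\varphi^{-1}$ --- obtaining $\Theta_+,\Theta_-$ compactly supported in $\varphi(V)$. The mismatch between the two straightenings is then absorbed into $\Psi$ by replacing it with $\Psi\circ\varphi^{-1}\circ\Theta_-\circ\Theta_+^{-1}\circ\varphi$, which is automatically a Hamiltonian diffeomorphism of $M$ because $\Theta_-\circ\Theta_+^{-1}$ is compactly supported in the chart. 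This completely sidesteps the problem you spend most of your effort on, namely realizing the transition map between two independently constructed charts as a Hamiltonian diffeomorphism. Your route --- two charts at $p_\pm$, then $\Psi_1$ moving the point, $\Psi_2$ matching differentials via connectedness of $\mathrm{Sp}(2n,\mathbb{R})$, and the Alexander--Moser rescaling $\sigma_t(x)=t^{-1}\sigma(tx)$ to promote first-order agreement to exact agreement near $0$ --- is standard and does work (the vector field generating $\sigma_t$ is symplectic and hence Hamiltonian on a star-shaped neighbourhood, and the cutoff is harmless on a small enough neighbourhood of the fixed point), but it is machinery the paper's bookkeeping trick renders unnecessary. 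If you want the shorter argument, note that nothing forces the two straightening maps $\Theta_\pm$ to live near different points of $M$: composing with $\Psi$ first puts both functions on the same chart.
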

\begin{proof}
 Since $u$ is non-zero and normalized, by Sard's theorem there exist points $p, q \in M$ such that $du(p)\neq 0$ and $ du(q)\neq 0 $, and moreover $ u(q)=-u(p) $. Pick some $ \Psi \in \Ham(M,\omega) $ such that $ \Psi(p) = q $. Choose a Darboux chart $ (V,\varphi) $ around the point $ p $, such that $ \varphi(p) = 0 \in \mathbb{R}^{2n} $. By modifying $ (V,\varphi) $ and $ \Psi $, if necessary, we may without loss of generality assume that $ d(u \circ \varphi^{-1})(0) = dx_1 $ and $ d(u\circ \Psi \circ \varphi^{-1})(0) = -dx_1 $. By Corollary \ref{cor:coord-change} we can find Hamiltonian diffeomorphisms $ \Theta_+,\Theta_- $ compactly supported in $ \varphi(V) $ such that $ \Theta_+(0) = \Theta_-(0) = 0 $ and such that $ u \circ \varphi^{-1} \circ \Theta_+(x_1,\ldots,x_{2n}) = x_1 + c $ and $ u \circ \Psi \circ \varphi^{-1} \circ \Theta_-(x_1,\ldots,x_{2n}) = -x_1 - c $ on a neighbourhood of $ 0 \in \mathbb{R}^{2n} $. Replacing $ \varphi $ by $ \Theta_+^{-1} \circ \varphi $ and $ \Psi $ by $\Psi\circ\varphi^{-1}\circ\Theta_{-}\circ\Theta_{+}^{-1}\circ\varphi$, and then shrinking $ V $ while still keeping that $ p \in V $, we get that 
$$ u \circ \varphi^{-1} (x_1,\ldots,x_{2n}) = x_1 + c $$ 
$$ u \circ \Psi \circ \varphi^{-1} (x_1,\ldots,x_{2n}) = -x_1 - c $$ 
hold on $ \varphi(V) $, where $ c =u(p) $. 
\end{proof}

\begin{lemma}[Localization]\label{localizationLemma}
    Let $M$ be a closed and connected manifold endowed with a volume form $ \Omega $, and let $\mathcal{U}=\{U_1,\ldots, U_m\}$ be an open cover of $M$. Then for every smooth function $f:M\rightarrow\mathbb{R}$ with $\int_M f\Omega=0$ one can find smooth functions $f_1,\ldots,f_m:M\rightarrow\mathbb{R}$ such that:
    \begin{enumerate}[label=(\roman*)]
        \item $f=\sum_{i=1}^m f_i$.
        \item For each $ i $, $f_i$ is supported inside $U_i$.
        \item $\int_{U_i}f_i\Omega=0$.
        \item $(\exists\, C=C(\mathcal{U}))\,\|f_i\|_{\infty}\leq C\|f\|_{\infty}$ for every $ i $.
    \end{enumerate}
\end{lemma}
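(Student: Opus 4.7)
The plan is to start from a partition-of-unity decomposition and then correct the failed integrals via a cascading procedure on the nerve graph of the cover. First, I would fix a smooth partition of unity $\{\rho_1,\ldots,\rho_m\}$ subordinate to $\mathcal{U}$ and set $g_i := \rho_i f$. This immediately gives $f=\sum_{i=1}^m g_i$ with $\supp(g_i)\subset U_i$ and $\|g_i\|_{\infty}\le\|f\|_\infty$, so (i), (ii), and a preliminary version of (iv) come for free. The issue is that the masses $a_i := \int_M g_i\,\Omega$ need not vanish; however, by hypothesis $\sum_i a_i = \int_M f\,\Omega = 0$, and $|a_i|\le \vol(M)\|f\|_\infty$.

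Next I would build a mechanism for transferring mass between overlapping pieces. Form the nerve graph $G=(\mathcal{V},\mathcal{E})$ with $\mathcal{V}=\{1,\ldots,m\}$ and $\{i,j\}\in\mathcal{E}$ whenever $U_i\cap U_j\neq\emptyset$. Since $M$ is connected and covered by the $U_i$, the graph $G$ is connected (otherwise $M$ would split as a disjoint union of two nonempty open sets). Fix a spanning tree $T$ of $G$, and for every edge $\{i,j\}\in T$ choose once and for all a smooth bump function $b_{ij}$ with $\supp(b_{ij})\subset U_i\cap U_j$ and $\int_M b_{ij}\,\Omega = 1$; these depend only on $\mathcal{U}$ and not on $f$.

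The $f_i$ are then obtained by an iterative leaf-pruning procedure on $T$. Pick any leaf $i_0$ with unique $T$-neighbor $j_0$, and replace
\[g_{i_0}\longmapsto g_{i_0}-a_{i_0}\,b_{i_0 j_0},\qquad g_{j_0}\longmapsto g_{j_0}+a_{i_0}\,b_{i_0 j_0}.\]
Since $\supp(b_{i_0 j_0})\subset U_{i_0}\cap U_{j_0}$, both modified functions remain supported in their respective $U_\bullet$. The sum $\sum_i g_i = f$ is unchanged, the mass at $i_0$ becomes $0$, and the mass at $j_0$ becomes $a_{j_0}+a_{i_0}$. Remove $i_0$ from $T$ and iterate. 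Because the total mass is preserved throughout and equals zero, after $m-1$ steps every $\int_{U_i} g_i\,\Omega$ vanishes. Set $f_i$ to be the resulting $g_i$; then (i), (ii), (iii) hold.

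For the quantitative bound (iv), note that each correction step adds to some $g_k$ a multiple of a fixed bump $b_{ij}$ whose coefficient is, up to sign, a partial sum of the original $a_i$'s, and hence bounded in absolute value by $m\cdot \vol(M)\cdot \|f\|_\infty$. Any given $f_i$ is affected by at most $\deg_T(i)\le m-1$ such corrections, yielding $\|f_i\|_\infty\le C\|f\|_\infty$ with
\[C := \max_i\|\rho_i\|_\infty + m^2\,\vol(M)\cdot \max_{\{i,j\}\in T}\|b_{ij}\|_\infty,\]
a constant depending only on $\mathcal{U}$. The only genuinely topological input is the connectedness of the nerve $G$, inherited from connectedness of $M$; this is really the only potential obstacle, as everything else amounts to linear bookkeeping on the tree $T$.
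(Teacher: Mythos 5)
Your argument is correct; all four conclusions follow, and the constant you exhibit genuinely depends only on $\mathcal{U}$ (the partition of unity, the spanning tree $T$, and the bumps $b_{ij}$ are all fixed before $f$ enters). The route differs from the paper's in organization rather than in substance. The paper proceeds by induction on $m$: it finds two overlapping members $U_{m-1},U_m$, replaces them by a single set $V\Subset U_{m-1}\cup U_m$, applies the inductive hypothesis to the smaller cover, and then splits the resulting piece $g$ supported in $V$ as $\rho g$ and $\rho' g$, correcting the two integrals with a single bump $h$ supported in $U_{m-1}\cap U_m$. That inductive step is essentially one of your leaf-pruning moves, so the two proofs rest on the same mechanism (mass transfer through nonempty intersections, with connectedness of $M$ supplying connectedness of the overlap structure); your version does the whole decomposition in one shot via a global partition of unity and makes the combinatorics explicit through the nerve graph, at the price of slightly more bookkeeping for the bound, while the paper's induction hides the tree and keeps each step minimal. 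One small caveat, shared by the paper's proof: the claim that the nerve is connected (respectively, that some $U_i\cap U_j\neq\emptyset$) tacitly assumes no $U_i$ is empty; empty members contribute $g_i=\rho_i f\equiv 0$ and can simply be discarded at the outset, so this is harmless but worth a sentence.
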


\begin{proof}
 The proof is by induction on $ m $. The case $ m = 1 $ is clear, so assume that $ m \geqslant 2 $. Since $ M $ is connected, there exist distinct $ i,j $ such that $ U_i \cap U_j \neq \emptyset $. Without loss of generality we may assume that $ i=m-1 $ and $ j=m $. Choose an open set $ V \Subset U_{m-1} \cup U_m $ such that $ U_1, \ldots, U_{m-2}, V $ cover $ M $ (it exists by compactness). Then, choose smooth functions $ \rho, \rho' \in C^\infty(M) $ such that $ \rho $ is supported in $ U_{m-1} $, $ \rho' $ is supported in $ U_m $, and $ \rho + \rho' = 1 $ on $ V $. Also, since $ U_{m-1} \cap U_m \neq \emptyset $, we can find a smooth function $ h \in C^\infty(M) $ supported in $ U_{m-1} \cap U_m $ such that $ \int_M h \Omega = 1 $. 
 
 Now assume that we have some smooth function $f:M\rightarrow\mathbb{R}$ with $\int_M f\Omega=0$. Applying the induction hypothesis to the cover $ U_1, \ldots, U_{m-2}, V $ of $ M $, we obtain smooth functions $ f_1, \ldots, f_{m-2}, g $ such that $ f = f_1 + \cdots + f_{m-2} + g $, for $ 1 \leqslant i \leqslant m-2 $ the function $ f_i $ is supported in $ U_i $, the function $ g $ is supported in $ V $, we have 
 $$ \int_{U_1} f_1 \Omega = \cdots =  \int_{U_{m-2}} f_{m-2} \Omega = \int_{V} g \Omega = 0, $$
and $ \|f_i\|_{\infty}, \| g \|_\infty \leq C\|f\|_{\infty}$. Denote $ \lambda = \int_M \rho g \Omega $, and then define 
$$ f_{m-1} = \rho g - \lambda h ,$$
$$ f_m = \rho' g + \lambda h .$$
It is clear that the functions $ f_1, \ldots, f_m $ satisfy the needed requirements.
\end{proof}

The remaining ingredient needed for the proof of Theorem \ref{ThmMain} is an averaging property which was introduced by Polterovich in \cite{P}:

\begin{theorem}[Averaging property] \label{thm:AvProp}
Let $ (M,\omega) $ be a connected symplectic manifold, and let $ H : M \rightarrow \mathbb{R} $ be a compactly supported continuous function such that $ \int_M H \omega^n = 0 $. Then for every $ \varepsilon > 0 $ there exist $ \Theta_1, \ldots, \Theta_\ell \in \Ham(M,\omega) $ such that 
$$ \frac{1}{\ell} | H \circ \Theta_1(x) + \cdots + H \circ \Theta_\ell(x) | < \varepsilon $$
for every $ x \in M $.
\end{theorem}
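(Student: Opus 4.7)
The plan is to reduce the statement to a combinatorial averaging: I partition an open neighborhood of $K := \supp(H)$ into finitely many equal-volume cells on which $H$ is nearly constant, then realise a cyclic permutation of these cells by a single Hamiltonian diffeomorphism $\Theta \in \Ham(M,\omega)$, and finally take $\Theta_j := \Theta^{j-1}$ with $\ell := N$ the number of cells.

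For the setup, using uniform continuity of $H$ and compactness of $K$, I fix $\delta \in (0,\varepsilon)$ and choose an integer $N$ together with pairwise disjoint open Darboux cubes $A_1,\ldots,A_N$ of equal symplectic volume $v$, all contained in some open set $W \subsetneq M$, such that $K \subset \bigcup_i \overline{A_i}$ and the oscillation of $H$ on each $\overline{A_i}$ is at most $\delta$. The requirement $W \subsetneq M$ is arranged by taking $W$ to be a small thickening of $\bigcup_i \overline{A_i}$ avoiding at least one point of $M \setminus K$, which is possible since $K$ is compact and $M$ is connected; the cell diameters shrink as $N$ grows. Setting $c_k := v^{-1}\int_{A_k} H\,\omega^n$, the fact that $H \equiv 0$ outside $\bigcup_i \overline{A_i}$ forces $\sum_k c_k\,v = \int_M H\,\omega^n = 0$, hence $\sum_k c_k = 0$.

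Next I construct $\Theta \in \Ham(M,\omega)$, compactly supported in $W$, such that $\Theta(A_i) = A_{i+1 \bmod N}$ for every $i$. The main input here is the $N$-transitivity of the Hamiltonian action on tuples of pairwise disjoint symplectic balls of equal volume lying inside a connected open subset: an elementary transposition of two equal-volume cells along a path in $W$ can be realised by a compactly supported Hamiltonian isotopy (a standard Moser-type swap), and composing such transpositions yields the desired $N$-cycle. With $\Theta_j := \Theta^{j-1}$, the verification is pointwise. For $x \in A_i$ the orbit $\{\Theta_j(x)\}_{j=1}^N$ visits each $A_k$ exactly once, so $\tfrac{1}{N}\sum_{j=1}^N H(\Theta_j(x)) = \tfrac{1}{N}\sum_k c_k + \mathcal{O}(\delta) = \mathcal{O}(\delta) < \varepsilon$. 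For $x \in W \setminus \bigcup_i \overline{A_i}$, since $\Theta$ preserves both $W$ and $\bigcup_i \overline{A_i}$ the entire orbit remains in $W \setminus \bigcup_i \overline{A_i}$, a set disjoint from $K$, so each $H(\Theta_j(x)) = 0$. And for $x \in M \setminus W$, $\Theta_j(x) = x \notin K$, again giving zero contribution.

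The main obstacle is the construction of $\Theta$: one must produce a single compactly supported Hamiltonian diffeomorphism that realises the full $N$-cycle \emph{exactly} (not merely approximately), while being supported inside the prescribed open set $W$. This rests on Hamiltonian transitivity on disjoint equal-volume cells, which in turn can be established by combining Moser's theorem with connectedness of $M$ and an inductive swap argument; careful organisation of the cells inside $W$ along an embedded arc ensures that the successive transpositions do not interfere. The measure-zero boundary set between cells also demands a brief check, but since $H$ is continuous the oscillation bound handles the boundaries as well.
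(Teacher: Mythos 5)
Your overall scheme (equal-volume cells on which $H$ has small oscillation, a Hamiltonian diffeomorphism realizing an exact cyclic permutation of the cells, and its powers as the $\Theta_j$) is the classical Katok-type averaging idea, and the arithmetic at its core is correct. The genuine gap is at the very first step: you assert the existence of pairwise disjoint open Darboux cubes $A_1,\dots,A_N$ of \emph{equal} symplectic volume whose closures cover $K=\supp(H)$. When $K$ lies in a single Darboux chart this is just a cubical grid and the argument goes through; for a general compact $K$ in a general $(M,\omega)$ no such exact tiling is available. The cells of an honest tiling of a neighbourhood of $K$ cannot all be taken to be images of one standard cube under symplectic embeddings, and in dimension $2n\ge 4$ two small open sets of equal volume need not be symplectomorphic at all, so the exact swaps you rely on need not exist between them. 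This is precisely the difficulty that the measure-theoretic versions of the argument (Katok's basic lemma, Ostrover--Wagner, Lempert) are designed to circumvent: they permute measurable equal-volume pieces only up to sets of small measure, which yields an $L^1$ bound, and a separate regularization step is then required to upgrade to a pointwise bound. Your proposal silently skips both steps, and there is no easy reduction to the single-chart case either, since splitting $H$ into chart-supported pieces produces averages of different summands by different diffeomorphisms, not an average of $H$ itself.

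A second, related gap concerns boundary behaviour. For a pointwise estimate you need $\Theta$ to map each \emph{closed} cell $\overline{A_i}$ exactly onto $\overline{A_{\sigma(i)}}$; the standard transposition-by-translation construction (the one this paper uses in the permutation claim inside the proof of the local statement) only controls the diffeomorphism on slightly shrunken inner cubes, and on the remaining frame the orbit of $x$ under the powers of $\Theta$ need not visit each cell exactly once, while $|H|$ there can be of order $\|H\|_\infty$. Since the desired estimate is uniform rather than $L^1$, this set of small measure cannot be discarded; fixing it requires translating the full closed cubes along mutually disjoint paths, which needs an explicit argument. For comparison, the paper's proof takes an entirely different, functional-analytic route: it shows that $0$ lies in the $C^0$-closed convex hull of the $\Ham_{\mathrm{c}}(M,\omega)$-orbit of $H$ via a Hahn--Banach separation argument combined with two lemmas of Lempert (density of smooth distributions in the closed convex hull of the orbit of a distribution, and an $L^1$ averaging lemma), thereby bypassing any explicit tiling or permutation altogether.
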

This statement was shown in \cite{P} in the case when $ M $ is closed. We believe that the arguments of \cite{P} can be adapted also for showing Theorem \ref{thm:AvProp}. For convenience of the reader, in Section \ref{sec:AvProp} below we give a sketch of an alternative argument.

Now we turn to the proof of Theorem \ref{ThmMain}. Let $u\in C_0^{\infty}(M)$ be a non-zero smooth and normalized function. First apply Lemma \ref{extended-Darboux_nbhd-Lemma} to obtain a Darboux chart $ (V,\varphi) $ and a Hamiltonian diffeomorphism $\Psi \in \Ham(M,\omega)$ such that $ 0 \in \varphi(V) $ and such that we have
$$ u \circ \varphi^{-1} (x_1,\ldots,x_{2n}) = x_1 + c ,$$  
$$ u\circ \Psi \circ \varphi^{-1} (x_1,\ldots,x_{2n}) = -x_1 - c $$
hold on $ \varphi(V) \subset\mathbb{R}^{2n}$. Choose $ L > 0 $ such that $[-8L,8L]^{2n} \subset \varphi(V) $, and denote $ W = \varphi^{-1}((-L,L)^{2n}) $. Recall that $\varphi(p)=0$, and for every point $ q \in M $ choose $\Psi_q\in\Ham(M,\omega)$ such that $\Psi_q(p)=q$, denote $ W_q = \Psi_q(W) $ and $ V_q = \Psi_q(V) $, and define Darboux coordinates $ \varphi_q:  V_q \rightarrow \mathbb{R}^{2n} $ by $ \varphi_q = \varphi \circ \Psi_q^{-1} $. We obtain an open cover $ \cup_{q \in M} W_q = M $, and by compactness we can choose a finite subcover $ \cup_{j=1}^m W_{q_j} = M $. Let $ C_{1} = C( \{ W_{q_j} \} )  > 0 $ be the constant given by Lemma \ref{localizationLemma} when applied to the cover $ \{ W_{q_j} \} $ of $ M $. To simplify the notation, denote $ W_j := W_{q_j} $, $ \Psi_j := \Psi_{q_j} $ and $ \varphi_{j} := \varphi_{q_j} $ for $ 1 \leq j \leq m $.

Define $ v := u + u \circ \Psi \in C_0^\infty(M) $. The function $ v $ is normalized and its support lies in $ \widetilde{M} := M \setminus \varphi^{-1}([-8L,8L]^{2n}) $. Applying Theorem \ref{thm:AvProp} to the restriction of the function $ v $ to $ \widetilde{M} $, we obtain $ \Theta_1, \ldots, \Theta_\ell \in \Ham_{\mathrm{c}}(\widetilde{M},\omega) \subset \Ham(M,\omega)$ such that
$$ \frac{1}{\ell} | v \circ \Theta_1 + \cdots + v \circ \Theta_\ell | < \frac{L}{2m C_1} $$
holds on $ M $. Define smooth normalized functions
$$ w_+ = u \circ \Theta_1 + \cdots + u \circ \Theta_\ell ,$$
$$ w_- = u \circ \Psi \circ \Theta_1 + \cdots + u \circ \Psi \circ \Theta_\ell ,$$
$$ w = w_+ + w_- = v \circ \Theta_1 + \cdots + v \circ \Theta_\ell ,$$
and note that we have $ w_+ \circ \varphi^{-1} (x) = \ell x_1 + \ell c $ and $ w_- \circ \varphi^{-1} (x) = -\ell x_1 - \ell c $ for $ x \in [-8L,8L]^{2n} $, and therefore
for each $ 1 \leq i \leq m $ we have 
\begin{equation} \label{eq:w-std}
\begin{gathered}
 w_+ \circ \Psi_i^{-1} \circ \varphi_i^{-1} (x) = \ell x_1 + \ell c, \\
 w_- \circ \Psi_i^{-1} \circ \varphi_i^{-1} (x) = -\ell x_1 - \ell c
\end{gathered}
\end{equation}
for every $ x \in [-8L,8L]^{2n} $.

Now let $ f \in C_0^\infty(M) $ be a smooth normalized function with $ \| f \|_\infty \leq 1 $. Define $ h \in C_0^\infty(M) $ by 
$$ h = \frac{1}{N_1} f - w \circ \Psi_1^{-1} - \cdots - w \circ \Psi_m^{-1} ,$$
where $ N_1 = \lceil \frac{2C_1}{\ell L} \rceil \in \mathbb{N} $. Then we have
$$ \| h \|_\infty \leq \frac{1}{N_1} \| f \|_\infty + m \| w \|_\infty \leq  \frac{ \ell L}{2C_1} + \frac{\ell L}{2C_1} = \frac{\ell L}{C_1}. $$
By Lemma \ref{localizationLemma} we can decompose $ h = h_1 + \cdots + h_m $, where for each $ 1 \leq i \leq m $, $ h_i \in C_0^\infty(M) $ is supported in $ W_i $, and 
$ \| h_i \|_\infty \leq \ell L $. Then for every $ i $, because of $(\ref{eq:w-std})$ we can apply Theorem \ref{ThmLocal} (the local statement) to $ \frac{1}{\ell } h_i \circ \varphi_i^{-1} $ to obtain $ \Phi_{i,j}^+ , \Phi_{i,j}^- \in \Ham(M,\omega) $ ($ 1 \leq j \leq N $) supported in $ \varphi_i^{-1} ((-8L,8L)^{2n}) $ such that the equality
$$ h_i =  \sum_{j=1}^N  (\Phi_{i,j}^+)^* (\Psi_i^{-1})^* w_+ +  \sum_{j=1}^N  (\Phi_{i,j}^-)^* (\Psi_i^{-1})^* w_- . $$
holds on $ \varphi_i^{-1} ((-8L,8L)^{2n}) $. But this means that the equality 
\begin{equation} \label{eq:h-i-rep}
 h_i = - w \circ \Psi_i^{-1} + \sum_{j=1}^N  (\Phi_{i,j}^+)^* (\Psi_i^{-1})^* w_+ +  \sum_{j=1}^N  (\Phi_{i,j}^-)^* (\Psi_i^{-1})^* w_- 
\end{equation} 
holds both on $ \varphi_i^{-1} ((-8L,8L)^{2n}) $ (since $ w \circ \Psi_i^{-1} $ vanishes there) and on $ M \setminus \varphi_i^{-1} ((-8L,8L)^{2n}) $ (since both the left-hand side and
the right-hand side vanish there). That is, $(\ref{eq:h-i-rep})$ holds on $ M $.
Summarizing, we get
\begin{equation*}
\begin{gathered}
f = N_1 h + N_1 \sum_{i=1}^m w \circ \Psi_i^{-1} =  N_1 \sum_{i=1}^m h_i + N_1 \sum_{i=1}^m w \circ \Psi_i^{-1} \\
=  N_1  \sum_{i=1}^m \sum_{j=1}^N  (\Phi_{i,j}^+)^* (\Psi_i^{-1})^* w_+ +  N_1 \sum_{i=1}^m \sum_{j=1}^N  (\Phi_{i,j}^-)^* (\Psi_i^{-1})^* w_- \\
=  N_1  \sum_{i=1}^m \sum_{j=1}^N  \sum_{k=1}^\ell (\Phi_{i,j}^+)^* (\Psi_i^{-1})^* \Theta_k^* u +  N_1 \sum_{i=1}^m \sum_{j=1}^N  \sum_{k=1}^\ell (\Phi_{i,j}^-)^* (\Psi_i^{-1})^* 
\Theta_k^* \Psi^* u
\end{gathered}
\end{equation*}
and this finishes the proof of Theorem \ref{ThmMain}.

\subsection{The averaging property} \label{sec:AvProp}

\begin{proof}[Proof of Theorem \ref{thm:AvProp}]
    Denote $\mathcal{D}_0:=C^0_c(M)$, with its usual inductive limit topology induced by the sup norms on compact subsets of $M$. Let $\mathcal{D}_0'$ be the dual space, with the topology induced by $|\langle\cdot,u\rangle|,\,u\in\mathcal{D}_0$. Let $\mathcal{V}\subset C(M)$ be the subspace of functions vanishing at infinity, endowed with the sup norm. Let $\operatorname{conv}_0(H)\subset\mathcal{V}$ be the closed (in the $C^0$ topology) convex hull of the $\Ham_{\mathrm{c}}(M,\omega)$ orbit of $H$. It suffices to show that $0\in \operatorname{conv}_0(H)$. Indeed, if $0\in \operatorname{conv}_0(H)$, then for any $\varepsilon>0$ there exists $l\in\mathbb{N}$, $\alpha_1,\ldots,\alpha_l\in [0,1]$ with $\alpha_1+\cdots+\alpha_l=1$, and $\Theta_1,\ldots,\Theta_l\in\Ham_{\mathrm{c}}(M,\omega)$ such that
    \[\forall x\in M,\quad\quad\Big|\sum_{j=1}^{l}\alpha_j\cdot H(\Theta_j(x))\Big|<\varepsilon/2.\]
    Next, we find $\beta_1,\ldots,\beta_{l}\in\mathbb{Q}\cap[0,1]$, with $\beta_1+\cdots+\beta_l=1$, such that 
    \[\sum_{j=1}^{l}|\alpha_j-\beta_j|<\frac{\varepsilon}{2\cdot\max_{x\in M}|H(x)|}.\]
    From here we get 
    \[\forall x\in M,\quad\quad\Big|\sum_{j=1}^{l}\beta_j\cdot H(\Theta_j(x))\Big|<\varepsilon,\]
    which implies the averaging property.\\

    \noindent We argue by contradiction that $0\in \operatorname{conv}_0(H)$. Assume that $0\not\in \operatorname{conv}_0(H)$. Then one can find an element $h\in\mathcal{V}'$ in the dual space of $\mathcal{V}$, such that $\langle h,K\rangle\geq 1$ for all $K$ in the $\Ham_{\mathrm{c}}(M,\omega)$ orbit of $H$. The group $\Ham_{\mathrm{c}}(M,\omega)$ acts on $\mathcal{V}$ by pull back and on $\mathcal{V}'$ by push forward, therefore
    \begin{equation}\label{equationI}
        \forall\Phi,\Psi\in\Ham_{\mathrm{c}}(M,\omega)\quad\langle\Phi_*h,\Psi^* H\rangle=\langle  h,\Phi^*\Psi^* H\rangle\geq 1.
    \end{equation}
    Note that $\mathcal{V}'\subset\mathcal{D}_0'$. It follows from (\ref{equationI}) that $\langle h',K\rangle\geq 1$ for any $K$ in the convex hull of the $\Ham_{\mathrm{c}}(M,\omega)$ orbit of $H$, and for any $h'$ in the $\mathcal{D}_0'$ closure of the convex hull of the $\Ham_{\mathrm{c}}(M,\omega)$ orbit of $h$.

    \begin{lemma}[Lemma 2.3 in \cite{Le} for distributions of finite order]
        For any $h\in \mathcal{D}_0'$, smooth distributions are dense in the closed convex hull of $\Ham_{\mathrm{c}}(M,\omega)$ orbit of $h$.
    \end{lemma}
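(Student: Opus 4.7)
The plan is to construct smooth mollifications of $h$ by averaging its push-forwards over a finite-dimensional smooth family of Hamiltonian isotopies, and then verify that these mollifications lie in the closed convex hull of the $\Ham_{\mathrm{c}}(M,\omega)$-orbit of $h$ and converge to $h$ in the $\mathcal{D}_0'$ topology.

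Fix a compact set $K\subset M$. Choose compactly supported smooth Hamiltonians $H_1,\ldots,H_N$ on $M$ such that the Hamiltonian vector fields $X_{H_1}(p),\ldots,X_{H_N}(p)$ span $T_pM$ at every $p\in K$; such $H_i$'s exist by covering $K$ by finitely many Darboux charts and using cut-offs of coordinate functions. For $\varepsilon>0$ let $\rho_\varepsilon\in C_{\mathrm{c}}^{\infty}(\mathbb{R}^N)$ be a smooth probability density supported in the $\varepsilon$-ball around the origin. Set $\Phi_s := \phi^{s_1}_{H_1}\circ\cdots\circ\phi^{s_N}_{H_N}$ and define the weak integral
\[
h_\varepsilon := \int_{\mathbb{R}^N} \rho_\varepsilon(s)\,(\Phi_s)_* h\,ds \quad\in\ \mathcal{D}_0',
\]
understood via $\langle h_\varepsilon, u\rangle = \int \rho_\varepsilon(s)\langle h,\Phi_s^* u\rangle\,ds$ for every $u\in\mathcal{D}_0$.

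Three claims about $h_\varepsilon$ together prove the lemma. (i) \textbf{Membership in the closed convex hull}: Riemann sums $\sum_k \rho_\varepsilon(s_k)\Delta s_k\cdot(\Phi_{s_k})_* h$ are, up to normalization converging to $1$, convex combinations of elements of the orbit; they converge in $\mathcal{D}_0'$ to $h_\varepsilon$ because for every fixed $u\in\mathcal{D}_0$ the function $s\mapsto\langle h,\Phi_s^* u\rangle$ is smooth and bounded on the support of $\rho_\varepsilon$. (ii) \textbf{Smoothness on $K$}: the spanning property makes $s\mapsto\Phi_s(x)$ a submersion at $s=0$ for every $x\in K$, so a local change of variables in the $s$-integration rewrites the operator $(A_\varepsilon u)(x):=\int\rho_\varepsilon(s)u(\Phi_s(x))\,ds$ as $\int_M K_\varepsilon(x,y)u(y)\,\omega^n(y)$ with a smooth kernel $K_\varepsilon$ supported near the diagonal. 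Consequently $h_\varepsilon$ restricted to test functions supported in $K$ is represented by the smooth density $x\mapsto\langle h_y,K_\varepsilon(y,x)\rangle$. (iii) \textbf{Convergence}: $A_\varepsilon u\to u$ uniformly with uniformly compact support as $\varepsilon\to 0$, whence $\langle h_\varepsilon, u\rangle\to\langle h,u\rangle$ for every $u\in\mathcal{D}_0$.

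The main obstacle is the submersion-to-smoothing-kernel step (ii): when $N>2n$ the fibres of $s\mapsto\Phi_s(x)$ are non-trivial, so the change of variables must be performed only along a transversal slice and the resulting kernel must be shown to be jointly smooth in $(x,y)$. I would handle this by selecting, near each point of $K$, exactly $2n$ Hamiltonians whose vector fields form a basis there, applying the construction locally, and patching the local mollifications by a smooth partition of unity in the $s$-parameter. A secondary technical point, when $M$ is non-compact and $h$ has non-compact support, is that $h_\varepsilon$ is smooth only on $K$ rather than globally; this suffices because the $\mathcal{D}_0'$ topology is determined by pairings with compactly supported test functions, so one obtains density by exhausting $M$ with a nested sequence $K_1\subset K_2\subset\cdots$ of compact sets and passing to a diagonal approximation.
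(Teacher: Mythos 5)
The paper does not actually prove this lemma -- it is quoted from Lempert's work and used as a black box -- so there is no in-paper argument to compare against. Your mollification scheme (average the push-forwards of $h$ over a compactly supported density in the parameters of a composition of Hamiltonian flows whose vector fields span the tangent space, then convert the $s$-integral into integration against a smooth kernel on $M$ via the submersion $s\mapsto\Phi_s(x)$) is the standard mechanism, and it is essentially the same smoothing device the authors themselves sketch for a related $L^1\to L^\infty$ bound. Your points (i) and (iii) are fine (for (i) note that $s\mapsto\langle h,\Phi_s^*u\rangle$ is merely continuous when $u$ is only $C^0$, but continuity suffices for the Riemann sums), and your handling of the $N>2n$ fibres in (ii) -- freeze $N-2n$ of the parameters, change variables in the remaining $2n$, and integrate out the rest -- is correct; joint smoothness of the resulting kernel and the interchange $\langle h,\int K_\varepsilon(\cdot,x)u(x)\,dx\rangle=\int\langle h,K_\varepsilon(\cdot,x)\rangle u(x)\,dx$ both go through because $h$ has order zero and $K_\varepsilon$ is smooth with uniformly compact supports.

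The genuine gap is your ``secondary technical point.'' On a non-compact $M$ (and the paper needs the lemma for open manifolds, since Theorem~\ref{thm:AvProp} is stated for a general connected symplectic manifold), finitely many compactly supported Hamiltonians cannot span $T_pM$ at every $p\in M$; outside the union of their supports $\Phi_s=\mathrm{Id}$, so $h_\varepsilon$ coincides with $h$ there and is not smooth. The lemma, as the paper uses it, requires an element of the closed convex hull ``given by integration against a smooth function $\phi$ on $M$,'' i.e.\ globally smooth. Your proposed fix -- exhaust $M$ by compacta $K_1\subset K_2\subset\cdots$ and take a diagonal approximation -- does not close this gap: every term $h_{\varepsilon_j}$ of the diagonal sequence is still smooth only on $K_j$, so you never exhibit a single globally smooth distribution in the closed convex hull. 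A correct repair is to \emph{compose} countably many mollifications adapted to a locally finite family of annular regions (choosing the Hamiltonians for the $j$-th step supported away from $K_{j-1}$, so that previously achieved smoothness is preserved, and the $\varepsilon_j$ summably small): each point of $M$ is then affected by only finitely many steps, the infinite composition is globally smooth, it is a weak-$*$ limit of finite compositions and hence still lies in the closed convex hull, and it converges to $h$ as the $\varepsilon_j\to0$. Alternatively, if one only needs the statement for closed $M$ (Lempert's setting) or smoothness on a neighbourhood of a fixed compact set (which is all the paper's application actually uses), your argument as written is complete.
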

    \noindent It follows that we can choose a smooth distribution $h'$, given by integration against a smooth function $\phi$ on $M$. Now we get
    \begin{equation}\label{equationII}
        \langle h',K\rangle=\int_M \phi K\omega^n\geq 1,
    \end{equation}
    for all $K$ in the convex hull of the $\Ham_{\mathrm{c}}(M,\omega)$ orbit of $H$.

    \begin{lemma}[Lemma 4.5 in \cite{Le}]\label{Lemma4.5}

        If $f\in L^1(X)$ and $E\subset X$ has a positive measure, then the function
        $$
        f'=\begin{cases}
			\fint_E f, & \text{on $E$}\\
            f, & \text{on $X\setminus E$}
		 \end{cases}
        $$
        is in the closure, in the $L^1(X)$ topology, of the convex hull of the $\Ham_{\mathrm{c}}(X)$ orbit of $f$.
    \end{lemma}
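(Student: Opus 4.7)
The plan is to approximate $f'$ in $L^1$ by convex averages of Hamiltonian pullbacks of $f$ via a cyclic shuffling construction. Roughly, I would choose an open neighbourhood $U \supset E$, partition most of $U$ into $N$ pairwise disjoint equal-volume cells of small diameter, and then use Hamiltonian diffeomorphisms supported in $U$ that cyclically permute these cells; averaging the corresponding pullbacks produces a function close to $\fint_U f$ on $U$ (and hence close to $\fint_E f$) while coinciding with $f$ on $X \setminus U$.

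First I would reduce to the case $f \in C_{\mathrm{c}}(X)$. Since elements of $\Ham_{\mathrm{c}}(X)$ preserve the volume, we have $\|g \circ \Phi^{-1}\|_1 = \|g\|_1$ for every $g \in L^1(X)$ and $\Phi \in \Ham_{\mathrm{c}}(X)$; hence if $g \in C_{\mathrm{c}}(X)$ satisfies $\|f-g\|_1 < \varepsilon$, then for any finite convex combination $\sum_i \alpha_i g \circ \Phi_i^{-1}$ the corresponding combination $\sum_i \alpha_i f \circ \Phi_i^{-1}$ lies within $\|f-g\|_1$ of it, while $\|f'-g'\|_1 \leq 2\|f-g\|_1$. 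It thus suffices to approximate $g'$ for continuous compactly supported $g$. Next, by outer regularity of the volume measure I would fix an open set $U \supset E$ with $|U \setminus E|$ as small as needed; since $g$ is bounded, the intermediate function $g''$ defined by $\fint_U g$ on $U$ and $g$ on $X \setminus U$ is $L^1$-close to $g'$, so approximating $g''$ is enough.

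For the core step, fix a large $N$ and choose pairwise disjoint open sets $B_1,\ldots,B_N \subset U$ of equal volume and small diameter, with $|U \setminus \bigsqcup_i B_i|$ small. Using Moser's theorem together with the path-connectedness of the space of ordered $N$-tuples of pairwise disjoint equal-volume symplectic balls inside the connected open domain $U$, one produces Hamiltonian diffeomorphisms $\Phi_1,\ldots,\Phi_N \in \Ham_{\mathrm{c}}(U) \subset \Ham_{\mathrm{c}}(X)$ realising the cyclic shift $\sigma(i) = i+1 \pmod N$ in the sense that $\Phi_k(B_i) = B_{\sigma^k(i)}$. Setting $h := \frac{1}{N}\sum_{k=1}^N g \circ \Phi_k^{-1}$, we have $h = g$ on $X\setminus U$, while for $x \in B_i$ the points $\Phi_k^{-1}(x)$ lie one in each ball $B_j$, so by uniform continuity of $g$ together with the smallness of the diameters, $h(x) \approx \frac{1}{N}\sum_j \fint_{B_j} g$, which is in turn uniformly close to $\fint_U g$. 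Combining all error estimates, $h$ approximates $g''$, and therefore $f'$, in $L^1$.

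The main obstacle will be the symplectic shuffling itself: producing Hamiltonian diffeomorphisms compactly supported in $U$ which cyclically permute the chosen balls. This ultimately comes down to Moser's theorem together with the connectedness of the configuration space of disjoint equal-volume symplectic balls, so that the cyclic shift is realised by integrating a path of configurations into a compactly supported Hamiltonian isotopy inside $U$. Some care is needed to arrange the balls so that $|U \setminus \bigsqcup_i B_i|$ is small while keeping the shuffling realisable, but once this is in place the averaging and error-control steps are routine.
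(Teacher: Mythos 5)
The paper does not actually prove this lemma --- it is quoted verbatim from Lempert \cite{Le} --- so the relevant comparison is with the standard argument behind it (which also appears, for a closely related $L^1$ statement, in an alternative proof the authors drafted using Katok's basic lemma). Your overall strategy is exactly that standard one: reduce to $g\in C_{\mathrm{c}}(X)$ by $L^1$-isometry of pullbacks, enlarge $E$ to a connected open $U$ with $|U\setminus E|$ small, nearly tile $U$ by $N$ congruent small cells, cyclically permute them by Hamiltonian diffeomorphisms supported in $U$, and average. All of your reductions and error estimates are correct as stated.

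The gap is in the one step you yourself flag as the crux, and it is genuine as written. If the cells $B_i$ are round balls, then pairwise disjoint congruent balls can \emph{never} exhaust $U$ up to small measure: the packing density of congruent balls in $\mathbb{R}^{2n}$ is bounded away from $1$ (about $0.907$ already for $n=1$, far less in higher dimensions), so the error term $2\|g\|_\infty\,|U\setminus\bigsqcup_i B_i|$ does not go to zero. If instead they are general equal-volume cells of small diameter, they need not be symplectomorphic when $2n\ge 4$, so no exact Hamiltonian permutation of them exists at all. Moreover, ``path-connectedness of the space of ordered $N$-tuples of disjoint equal symplectic balls in a connected domain'' is not something you may invoke as a black box: connectivity of spaces of symplectic ball packings is a delicate matter, established only in special cases. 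Both defects are repairable, and in two standard ways: (i) take the $B_i$ to be slightly shrunk translates of one small cube from a cubical tiling of $U$ --- translates are symplectomorphic, cubes tile up to arbitrarily small measure, and the slack lets you realize any permutation as a product of explicit transpositions of adjacent cubes, exactly as in the Claim inside the proof of Lemma \ref{LemmaPartitionOfFunction}; or (ii) abandon exactness entirely and use Katok's basic lemma \cite{Ka} to produce $\Theta_k$ with $\vol\bigl(\Theta_k(S_i)\,\Delta\,S_{i+k}\bigr)$ small --- an approximate cyclic shift is all your $L^1$ estimate requires, and this is the route taken in \cite{Le}. With either fix your argument goes through.
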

    
    \noindent Pick a relatively compact, connected, open neighbourhood $X\subset M$ of $\mathrm{supp}\,H$, and apply Lemma \ref{Lemma4.5} with $E=X$ and $f=H$. We get functions $H_j$ in the convex hull of the $\Ham_{\mathrm{c}}(M,\omega)$ orbit of $H$, that converge in $L^1$ to $0$, which contradicts (\ref{equationII}).
\end{proof}

\section{Proof of Theorem \ref{FiniteDimThm}}\label{FinieDimSection}

We begin with the following two claims.

\begin{claim}\label{RepresentingOpenSet}
    For every non-zero element $\xi\in\mathfrak{g}$, one can find a smooth curve $\gamma_i:\mathbb{R}\rightarrow G$ for each $1\leq i\leq n=\mathrm{dim}\,\mathfrak{g}$, such that the image of the function
    \[f:\mathbb{R}^n\rightarrow\mathfrak{g},\quad f(t_1,\ldots,t_n)=\sum_{i=1}^n\mathrm{Ad}_{\gamma_i(t_i)}\xi\]
    contains an open subset of $\mathfrak{g}$.
\end{claim}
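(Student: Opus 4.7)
The plan is to find curves $\gamma_i$ such that the differential $Df$ at the origin $(0,\ldots,0)\in\mathbb{R}^n$ is surjective; then by the (constant rank) inverse function theorem the image of $f$ contains a neighbourhood of $f(0)$. Writing $\gamma_i(0)=g_i$ and $X_i:=\dot\gamma_i(0)g_i^{-1}\in\mathfrak{g}$, differentiation gives
\[
\frac{\partial f}{\partial t_i}\bigg|_{0}=[X_i,\mathrm{Ad}_{g_i}\xi].
\]
Since the $X_i$ can be chosen freely in $\mathfrak{g}$, making $Df|_0$ surjective amounts to arranging that
\[
\sum_{i=1}^{n}\mathrm{Im}\bigl(\mathrm{ad}_{\mathrm{Ad}_{g_i}\xi}\bigr)=\mathfrak{g}
\]
for suitable $g_1,\ldots,g_n\in G$.

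To rephrase this condition, I would use the compactness of $G$: on the compact simple Lie algebra $\mathfrak{g}$, the negative Killing form $-B$ is a positive-definite $\mathrm{Ad}$-invariant inner product, with respect to which every $\mathrm{ad}_\eta$ is skew-symmetric. Hence $\mathrm{Im}(\mathrm{ad}_\eta)^\perp=\ker(\mathrm{ad}_\eta)=Z(\eta)$, the centralizer of $\eta$. Taking orthogonal complements, the spanning condition above is equivalent to
\[
\bigcap_{i=1}^{n}Z(\mathrm{Ad}_{g_i}\xi)=\bigcap_{i=1}^{n}\mathrm{Ad}_{g_i}Z(\xi)=\{0\}.
\]

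The main step, which I see as the core of the argument, is to show such $g_1,\ldots,g_n$ exist. Consider the subspace $I:=\bigcap_{g\in G}\mathrm{Ad}_gZ(\xi)\subset\mathfrak{g}$. It is $\mathrm{Ad}$-invariant by construction, hence an ideal of $\mathfrak{g}$. Since $\mathfrak{g}$ is simple, its centre vanishes, so from $\xi\neq 0$ we deduce that $Z(\xi)\subsetneq\mathfrak{g}$; thus $I\neq\mathfrak{g}$, and by simplicity $I=\{0\}$.

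Finally, I build the finite intersection inductively: start with any $g_1\in G$; if $\bigcap_{j\le k}\mathrm{Ad}_{g_j}Z(\xi)$ is still non-zero, then because $\bigcap_{g\in G}\mathrm{Ad}_gZ(\xi)=\{0\}$ there must exist some $g_{k+1}\in G$ for which $\mathrm{Ad}_{g_{k+1}}Z(\xi)$ fails to contain this intersection, and then adding $g_{k+1}$ strictly reduces the dimension. Since the dimension starts at $\dim Z(\xi)\le n-1$, after at most $n$ steps we reach $\{0\}$ (filling in repetitions if fewer than $n$ steps are needed). With these $g_1,\ldots,g_n$ fixed, the sum of images $\sum_i\mathrm{Im}(\mathrm{ad}_{\mathrm{Ad}_{g_i}\xi})$ equals $\mathfrak{g}$, so I can pick $X_i\in\mathfrak{g}$ with $[X_i,\mathrm{Ad}_{g_i}\xi]$, $i=1,\ldots,n$, spanning $\mathfrak{g}$ (distributing the required basis vectors among the slots, reusing the same $g_j$ for several indices $i$ as needed). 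Defining $\gamma_i(t):=\exp(tX_i)g_i$ gives the desired curves: $Df|_0$ is surjective, hence $f$ maps an open neighbourhood of $0$ in $\mathbb{R}^n$ to an open subset of $\mathfrak{g}$.
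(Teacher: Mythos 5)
Your proof is correct, but it takes a genuinely different route from the paper's. The paper never dualizes the surjectivity condition: it starts from the fact that, by simplicity, the iterated brackets $[[[\xi,\eta_1],\eta_2],\ldots,\eta_m]$ span $\mathfrak{g}$, introduces the family of maps $(t_1,\ldots,t_m)\mapsto \frac{\partial}{\partial t_m}\mathrm{Ad}_{\exp(t_m\eta_m)}\cdots\mathrm{Ad}_{\exp(t_1\eta_1)}\xi$, and shows by a linear-functional (duality) argument that the union of their images spans $\mathfrak{g}$; a basis extracted from these images then furnishes curves $\gamma_i$ (long words of exponentials) along which the $\frac{d}{ds}\big|_{s=0}\mathrm{Ad}_{\gamma_i(s)}\xi$ form a basis, and the inverse function theorem concludes. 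You instead reduce surjectivity of $Df|_0$ to $\sum_i \mathrm{Im}(\mathrm{ad}_{\mathrm{Ad}_{g_i}\xi})=\mathfrak{g}$, pass to orthogonal complements via the (invariant, definite) Killing form to turn this into $\bigcap_i \mathrm{Ad}_{g_i}Z(\xi)=\{0\}$, and then use simplicity in the form ``$\bigcap_{g\in G}\mathrm{Ad}_g Z(\xi)$ is a proper ideal, hence zero,'' finishing with a dimension-reduction induction. Both arguments are sound and both invoke simplicity at the crucial point, but in different guises. Your version is shorter and yields simpler curves $\gamma_i(t)=\exp(tX_i)g_i$; the paper's version makes no use of compactness at all in this claim (compactness enters only in the companion claim via the Haar measure). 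Note, though, that your appeal to compactness is also not really needed: for any simple Lie algebra the Killing form is non-degenerate and $\mathrm{ad}$-invariant, and the identity $\mathrm{Im}(\mathrm{ad}_\eta)^{\perp_B}=\ker(\mathrm{ad}_\eta)$ together with $(\sum_i V_i)^{\perp_B}=\bigcap_i V_i^{\perp_B}$ only requires non-degeneracy, so your argument generalizes to the non-compact simple case as well. One small point worth making explicit when you ``distribute the basis vectors among the slots'': after extracting a basis $v_1,\ldots,v_n$ from $\bigcup_i\mathrm{Im}(\mathrm{ad}_{\mathrm{Ad}_{g_i}\xi})$ you are implicitly re-indexing the group elements so that slot $i$ carries the $g_{\sigma(i)}$ whose image contains $v_i$; this is harmless but should be stated.
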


\begin{proof}
    Since the Lie algebra $\mathfrak{g}$ is simple, the ideal generated by $\xi$ has to be the entire $\mathfrak{g}$. In particular, we have
    \begin{equation}\label{LieSpan}
        \mathrm{span}\{[[[\xi,\eta_1],\eta_2],\ldots,\eta_m]\mid m\in\mathbb{N},\eta_1,\ldots,\eta_m\in\mathfrak{g}\}=\mathfrak{g}.
    \end{equation}
    Consider the family of functions
    \[\mathcal{F}=\left\{\begin{array}{l}
    F:\mathbb{R}^m\rightarrow\mathfrak{g},\quad F(t_1,t_2,\ldots,t_m)= \\
    =\frac{\partial}{\partial t_m}(\mathrm{Ad}_{\mathrm{exp}(t_m\eta_m)}\cdots\mathrm{Ad}_{\mathrm{exp}(t_2\eta_2)} \mathrm{Ad}_{\mathrm{exp}(t_1\eta_1)}\xi)
    \end{array}
    \Big\vert\, \begin{array}{l}
    m\in\mathbb{N}, \\
    \eta_1,\ldots,\eta_m\in\mathfrak{g}
  \end{array}\right\}.\]
    Using the identity $\mathrm{Ad}_{\mathrm{exp}(t\eta)}\xi=\xi+t[\xi,\eta]+\mathcal{O}(t^2)$ we get
    \begin{equation}\label{DiffSpan}
        \frac{\partial^m}{\partial t_1\partial t_2\cdots \partial t_m}\Big\vert_{t=0}\mathrm{Ad}_{\mathrm{exp}(t_m\eta_m)}\cdots\mathrm{Ad}_{\mathrm{exp}(t_2\eta_2)} \mathrm{Ad}_{\mathrm{exp}(t_1\eta_1)}\xi=[[[\xi,\eta_1],\eta_2],\ldots,\eta_m].
    \end{equation}
    We claim that
    \begin{equation}\label{ImagesSpan}
        \mathrm{Span}\bigcup_{F\in\mathcal{F}}\mathrm{Im}\,F=\mathfrak{g}.
    \end{equation}
    To prove our claim we argue by contradiction. Assume that (\ref{ImagesSpan}) does not hold. Then we can find a non-trivial linear functional $\varphi:\mathfrak{g}\rightarrow\mathbb{R}$ such that $\varphi|_{\mathrm{Im}F}=0$ for all $F\in\mathcal{F}$. In particular, we have $\varphi(\frac{\partial F}{\partial t_m}(t_1,\ldots,t_m))=0$ for all $F\in\mathcal{F}$ and $(t_1,\ldots,t_m)\in\mathbb{R}^m$. By differentiating last equation at $0$ we get $\varphi(\frac{\partial^m F}{\partial t_1\cdots\partial t_m}(0))=0$ for all $F\in\mathcal{F}$. Finally, using (\ref{LieSpan}) and (\ref{DiffSpan}) we get that $\varphi$ is identically equal to $0$, contradicting the fact that $\varphi$ is non-trivial.\\

    Let $n$ be the dimension of $\mathfrak{g}$. Equation (\ref{ImagesSpan}) implies that we can find a basis for $\mathfrak{g}$ of the form $\mathcal{B}=\{F_i(t^i_1,\ldots,t^i_{m_i})\mid 1\leq i\leq n,\,F_i\in\mathcal{F}\}$. Write each element of the basis $\mathcal{B}$ as 
    \[F_i(t^i_1,\ldots,t^i_{m_i}) = \frac{\partial}{\partial t_m}(\mathrm{Ad}_{\mathrm{exp}(t_m\eta^i_{m_i})}\cdots\mathrm{Ad}_{\mathrm{exp}(t_2\eta^i_2)} \mathrm{Ad}_{\mathrm{exp}(t_1\eta^i_1)}\xi)=\frac{d}{ds}\Big\vert_{s=0}\mathrm{Ad}_{\gamma_i(s)}\xi,\]
    where $\gamma_i:\mathbb{R}\rightarrow G$ is defined as 
    \[\gamma_i(s)=\mathrm{exp}((t^i_{m_i}+s)\eta^i_{m_i})\mathrm{exp}(t^i_{m_i-1}\eta^i_{m_i-1})\cdots\mathrm{exp}(t^i_1\eta^i_{1}).\]
    The fact that $\mathrm{Span}\{\frac{d}{ds}|_{s=0}\mathrm{Ad}_{\gamma_i(s)}\xi\mid 1\leq i\leq n\}=\mathfrak{g}$ implies that the function $f:\mathbb{R}^n\rightarrow\mathfrak{g}$ defined as
    \[f(t_1,\ldots,t_n)=\sum_{i=1}^n\mathrm{Ad}_{\gamma_i(t_i)}\xi\]
    has an invertible differential at the origin. Finally, the implicit function theorem implies that the image of $f$ contains an open neighbourhood of $f(0)$ in $\mathfrak{g}$.
\end{proof}

\begin{claim}\label{ConvexCombination}
    For all $\omega\in\mathfrak{g}$ there exists $g_1,g_2,\ldots,g_{n+1}\in G$ and $\lambda_1,\ldots,\lambda_{n+1}\in[0,1]$ such that $\sum_{i=1}^{n+1}\lambda_i=1$ and $\sum_{i=1}^{n+1}\lambda_i\cdot\mathrm{Ad}_{g_i}\omega=0$.
\end{claim}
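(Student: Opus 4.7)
The plan is to show that $0$ lies in the convex hull of the adjoint orbit $\mathcal{O}_\omega = \{\mathrm{Ad}_g \omega : g \in G\}$, and then invoke Carathéodory's theorem to express $0$ as a convex combination of at most $n+1$ elements of that orbit.

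First I would use the Haar probability measure $dg$ on the compact group $G$ to define the averaged vector
\[
v := \int_G \mathrm{Ad}_g\, \omega \, dg \in \mathfrak{g}.
\]
By left-invariance of Haar measure, $\mathrm{Ad}_h v = v$ for every $h \in G$, so $v$ is fixed by the adjoint action. Differentiating, $[X,v]=0$ for every $X\in\mathfrak{g}$, meaning $v$ lies in the center $Z(\mathfrak{g})$. The center is an ideal, and since $\mathfrak{g}$ is simple (in particular non-abelian), $Z(\mathfrak{g})=\{0\}$. Hence $v=0$.

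Next, I would argue that this $v$ actually lies in the convex hull of $\mathcal{O}_\omega$. The orbit $\mathcal{O}_\omega$ is the continuous image of the compact group $G$, so it is compact; and the convex hull of a compact set in the finite-dimensional space $\mathfrak{g}\cong\mathbb{R}^n$ is again compact (Carathéodory). The integral $v$ is approximable by Riemann sums, each of which is a convex combination of points of $\mathcal{O}_\omega$, so $v\in\mathrm{conv}(\mathcal{O}_\omega)$. Combining with the previous step, $0\in\mathrm{conv}(\mathcal{O}_\omega)$.

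Finally, Carathéodory's theorem applied to the subset $\mathcal{O}_\omega$ of the $n$-dimensional vector space $\mathfrak{g}$ says that any element of $\mathrm{conv}(\mathcal{O}_\omega)$ can be written as a convex combination of at most $n+1$ of its points. Applying this to $0$ yields the desired $g_1,\dots,g_{n+1}\in G$ and $\lambda_1,\dots,\lambda_{n+1}\in[0,1]$ with $\sum \lambda_i=1$ and $\sum\lambda_i\mathrm{Ad}_{g_i}\omega=0$. There is no real obstacle here — the two essential ingredients, simplicity giving trivial center and compactness giving the Haar-averaging trick, combine immediately; the only point worth stating carefully is that in finite dimensions the convex hull of a compact set is closed, so no approximation is needed at the end.
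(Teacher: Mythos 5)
Your proof is correct and follows essentially the same route as the paper: average $\mathrm{Ad}_g\omega$ over $G$ with respect to Haar measure, use $\mathrm{Ad}$-invariance of the average to conclude it vanishes, note that the average lies in the (compact) convex hull of the orbit, and finish with Carath\'eodory's theorem. You in fact supply slightly more detail than the paper at the step where invariance forces the average to be zero (via triviality of the center of a simple Lie algebra), which is a welcome clarification rather than a deviation.
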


\begin{proof}
   Let $d\mu$ be a Haar measure on $G$, and let $\eta=\int_{G}\mathrm{Ad}_{g}\omega\, d\mu(g)$. Note that
    \[\forall h\in G\quad\mathrm{Ad}_{h}\eta=\int_{G}\mathrm{Ad}_{hg}\omega \,d\mu(g)=\int_{G}\mathrm{Ad}_{hg}\omega\, d\mu(hg)=\eta.\]
    The above equation implies that $\eta=0$, and hence we have
    \begin{equation}\label{AverageAction}
        \int_{G}\mathrm{Ad}_g\omega\,d\mu(g)=0.
    \end{equation}
    Therefore $0$ is in the convex hull of the adjoint orbit of $\omega$ (a compact set). The claim now follows from Carath\'eodory's convex hull theorem.
\end{proof}

\noindent Claim \ref{RepresentingOpenSet} implies that there exists an open subset $\mathcal{U}\subset\mathfrak{g}$ with $\mathcal{U}\subset\mathcal{V}_{\xi,n}$. Choose $0\neq\omega\in\mathcal{U}$ and apply Claim \ref{ConvexCombination} to get $m\leq n+1$, $g_1,\ldots,g_{m}\in G$ and $\lambda_1,\ldots,\lambda_{m}\in(0,1]$ with $\sum_{i=1}^{m}\lambda_i\cdot\mathrm{Ad}_{g_i}\omega=0$. For each $1\leq i\leq m$ define open set $\mathcal{U}_i=\{\mathrm{Ad}_{g_i}\eta\mid\eta\in\mathcal{U}\}\subset\mathfrak{g}$. Since $\mathrm{Ad}_g\mathrm{Ad}_h=\mathrm{Ad}_{gh}$ we have $\mathcal{U}_i\subset\mathcal{V}_{\xi,n}$. Let $r_1,\ldots,r_{m}\in\mathbb{Q}\cap(0,1]$ be a rational approximation of $\lambda_1,\ldots,\lambda_m$, such that $\frac{\lambda_i}{r_i}\cdot\mathrm{Ad}_{g_i}\omega\in\mathcal{U}_i$. Write $r_i=\frac{k_i}{N}$, where $k_i\in\mathbb{N}$ and $N \in \mathbb{N} $, then we have
\[0=N\cdot\sum_{i=1}^{m}\lambda_i\cdot\mathrm{Ad}_{g_i}\omega=\sum_{i=1}^{m}k_i\cdot\left(\frac{\lambda_i}{r_i}\cdot\mathrm{Ad}_{g_i}\omega\right).\]
This equality implies that $0$ belongs to the open set
\[\underbrace{\mathcal{U}_{1}+\ldots+\mathcal{U}_{1}}_{k_1}+\underbrace{\mathcal{U}_{2}+\ldots+\mathcal{U}_{2}}_{k_2}+\ldots+\underbrace{\mathcal{U}_{m}+\ldots+\mathcal{U}_{m}}_{k_m}.\]
Therefore $\mathcal{V}_{\xi,\sum_{i=1}^mnk_i}$ contains an open neighbourhood of $0$.\qed


\begin{thebibliography}{9}

\bibitem{A} V. I. Arnol’d, {\it Small denominators. I. Mapping the circle onto itself}, Izv. Akad. Nauk SSSR Ser. Mat., Volume 25, Issue 1, 21--86 (1961).

\bibitem{Ba1} A. Banyaga, {\it Sur la structure du groupe des diff\'eomorphismes qui pr\'eservent une forme symplectique}, Comment. Math. Helv., 53(2):174--227, 1978.

\bibitem{Ba2} A. Banyaga, {\it The structure of classical diffeomorphism groups}, Volume 400 of Mathematics and its Applications. Kluwer Academic Publishers Group, Dordrecht, 1997.

\bibitem{BHS} L. Buhovsky, V. Humili\`{e}re and S. Seyfaddini, {\it The action spectrum and $ C^0 $ symplectic topology}, Math. Annalen {\bf 380}, 293--316 (2021).

\bibitem{BO} L. Buhovsky, Y. Ostrover, {\it On the uniqueness of Hofer’s geometry}, Geom. Funct. Anal. \textbf{21}, 1296--1330 (2011).

\bibitem{EP} Y. Eliashberg, L. Polterovich, {\it Bi-invariant metrics on the group of Hamiltonian diffeomorphisms}, Internat. J. Math. 4 (1993), no. 5, 727--738.

\bibitem{Ha} Z. Han, {\it Bi-invariant metrics on the group of symplectomorphisms}, Trans. Amer. Math. Soc, Volume 361, Number 6, 3343--3357 (2009).

\bibitem{Hi} D. Hilbert, {\it Gesammelte Abhandlungen}, Volume 3, Springer Verlag, Berlin (1935).

\bibitem{Ho} H. Hofer, {\it On the topological properties of symplectic maps}, Proc. Roy. Soc. Edinburgh Sect. A, 115(1-2):25--38, 1990.

\bibitem{Ka} A. B. Katok, {\it Ergodic perturbations of degenerate integrable Hamiltonian systems}, Math. USSR--Izv. 7 (1973), 535--571.

\bibitem{Kh} M. Khanevsky, {\it Quasimorphisms on surfaces and continuity in the Hofer norm}, Journal of Topology and Analysis, 2013.

\bibitem{L-C} P. Le Calvez, {\it Periodic orbits of Hamiltonian homeomorphisms of surfaces}, Duke Math. J., 133(1):125--184, 2006.

\bibitem{Le} L. Lempert, {\it On the adjoint action of the group of symplectic diffeomorphisms}, Pure Appl. Math. Q., Volume 18 (2022), Number 2, 657--682.

\bibitem{Mu} S. M\"uller, {\it The group of Hamiltonian homeomorphisms in the $L^\infty$-norm}, J. Korean Math. Soc. 45 (2008), no. 6, 1769--1784.

\bibitem{OM} Y.-G. Oh and S. M\"uller, {\it The group of Hamiltonian homeomorphisms and $ C^ 0$-symplectic topology}, J. Symplectic Geom., 5(2):167--219, 2007.

\bibitem{OW} Y. Ostrover, R. Wagner, {\it On the extremality of Hofer’s metric on the group of Hamiltonian diffeomorphisms}, Int. Math. Res. Not. no. 35 (2005) 2123--2141.

\bibitem{P} L. Polterovich, {\it Hamiltonian loops from the ergodic point of view}, J. Eur. Math. Soc. {\bf 1}, 87--107 (1999).

\end{thebibliography}
\end{document}